\newcommand{\R}{\mathbb{R}}
\newcommand{\D}{\mathbb{D}}
\newcommand{\E}{\mathbb{E}}
\newcommand{\DC}{\text{DC}}
\newcommand{\Prob}{\mathbb{P}}
\providecommand{\abs}[1]{\lvert#1\rvert}
\newtheorem{theorem}{Theorem}[section]
\newtheorem{lemma}{Lemma}[section]
\newtheorem{proposition}{Proposition}
\theoremstyle{definition}
\numberwithin{equation}{section}
\newtheorem{assumption}{Assumption}
\begin{document}
\noindent

\begin{titlepage}

\vspace{-1.0in}

\title{\bf Optimal Control of Brownian Inventory Models with Convex
  Inventory Cost:\\ Discounted Cost Case\footnote{Research supported
in part by NSF grants CMMI-0727400, CMMI-0825840, and CMMI-1030589}}

\author{J. G. Dai\footnote
{H. Milton Stewart School of Industrial and Systems Engineering,
  Georgia
  Institute of Technology, Atlanta, Georgia 30332, U.S.A.; dai@gatech.edu}
\ and \ Dacheng Yao\footnote
{Academy of Mathematics and Systems Science, Chinese
  Academy of Sciences, Beijing, 100190, China; dachengyao@amss.ac.cn} }
\date{October 29, 2011}
\maketitle
\thispagestyle{empty}
\begin{abstract}
  We consider an inventory system in which inventory level fluctuates
  as a Brownian motion in the absence of control.  The inventory
  continuously accumulates cost at a rate that is a general convex
  function of the inventory level, which can be negative when there is
  a backlog.  At any time, the inventory level can be adjusted by a
  positive or negative amount, which incurs a fixed positive cost and
  a proportional cost. The challenge is to find an adjustment policy
  that balances the inventory cost and adjustment cost to minimize the
  expected total discounted cost.  We provide a tutorial on using a
  three-step lower-bound approach to solving the optimal control
  problem under a discounted cost criterion. In addition, we prove that a
  four-parameter control band
  policy is optimal among all feasible policies. A key step   is the
  constructive proof of the existence of a unique solution to
  the free boundary problem.  The proof leads naturally to an algorithm
  to compute the four parameters of the optimal control band policy.

\end{abstract}

\noindent
\textbf{AMS classifications:} 60J70, 90B05, 93E20

\medskip

\noindent
\textbf{Keywords:} impulse control, singular control, control band,
verification theorem, free boundary problem, smooth pasting,
quasi-variational inequality
\end{titlepage}

\section{Introduction}
\label{sec:introduction}
Dai and Yao~\cite{DaiYao11a} studied the optimal control of Brownian
inventory models under the \emph{long-run average cost} criterion.
This paper is a companion of \cite{DaiYao11a}. It studies the same
Brownian inventory models, but under the \emph{discounted cost}
criterion. Its main purpose is to provide a tutorial on the powerful,
lower-bound approach to proving the optimality of a control band
policy among all feasible policies. The tutorial is rigorous and,
except the standard It\^o formula, self contained.  In addition, this
paper contributes to the literature by proving the existence of a
``smooth'' solution to the free boundary problem with a general convex
holding cost function. As a consequence, a four-parameter optimal
control band policy is shown to be optimal.  Our existence proof also
leads naturally to an algorithm to compute the optimal control band
parameters.

The introduction in \cite{DaiYao11a} gives  detailed descriptions of
the Brownian inventory models, control band policies, and the lower-bound
approach. It also gives  an extensive literature review. Most of the
development
there including the motivation to study non-linear holding cost
function and  the literature review applies to this paper as well
and it will not be repeated here. In the rest of this introduction, we
highlight the development that is specific to the discounted cost
case.

As in \cite{DaiYao11a}, inventory position is assumed to be adjustable,
either upward or downward. All adjustments are realized immediately
without any leadtime delay.  Each upward adjustment with amount
$\xi>0$ incurs a cost $K+k\xi$, where $K\ge 0$ and $k>0$ are the fixed
cost and the variable cost, respectively, for each upward
adjustment. Similarly, each downward adjustment with amount $\xi$
incurs a cost of $L+\ell \xi$ with fixed cost $L\ge 0$ and variable
cost $\ell>0$.  In addition, we assume that the holding cost function
$h:\R\to \R_+$ is a \emph{general} convex function that satisfies some
minimal assumptions in Assumption \ref{assumption:h-discounted}.  The
objective is to find some control policy that balances the inventory
cost and the adjustment cost so that, starting from any initial
inventory level $x$, the (infinite-horizon) expected total discounted
cost is minimized. When both upward and downward fixed costs are
positive, the model is an \emph{impulse control} problem.  When both fixed
cost are zero, the corresponding Brownian control problem is a
\emph{singular control} or \emph{instantaneous control} problem.  It
was demonstrated in Section 6 of \cite{DaiYao11a} that a singular
control problem is much easier to solve than an impulse control problem
and a two-parameter control band policy is optimal.  This control band
policy can be considered as the limit of a sequence of four-parameter
control band policies, each of which is optimal for an impulse control
problem. Therefore, in this paper, we restrict ourselves to impulse
control problems; namely, we assume that $K>0$ and $L>0$.
Although in this paper  we do not consider the singular control problem or
the Brownian control problem when the inventory backlog is not
allowed,  our proof for the existence of an optimal control band
policy for the impulse control problem can be extended to cover these
two cases. These extensions were carried out in Sections 6 and 7 of
\cite{DaiYao11a} in the average cost setting.

When the inventory holding cost function is linear, namely,
\begin{equation}
  \label{eq:linearh}
  h(x) =
  \begin{cases}
    h_1 x & \text{ if } x \ge 0, \\
    -p_1 x & \text{ if } x <0
  \end{cases}
\end{equation}
for some constants $p_1>0$ and $h_1>0$, Constantinides and Richard
\cite{ConstantinidesRichard78}  proved that a four-parameter control
band policy is optimal under the condition that
\begin{equation}
  \label{eq:costcondition}
  h_1 - \beta k>0 \quad \text{ and } \quad p_1 -\beta \ell>0.
\end{equation}
As explained in \cite{ConstantinidesRichard78} $h_1/\beta$ is the
present value of the holding cost of keeping one unit of inventory now
to infinity. If $h_1/\beta\le \ell$, it will never be optimal to
reduce the inventory level as long as $L>0$. Similarly, if
$p_1/\beta\le k$, it will never be optimal to increase the inventory
as long as $K>0$. Thus, condition (\ref{eq:costcondition}) is also
necessary for a four-parameter control band policy to be
optimal. Baccarin~\cite{Baccarin02} sketched a proof that a
four-parameter control band policy is also optimal when the holding
cost is quadratic given by
\begin{equation}
  \label{eq:quadratic}
  h(x) =
  \begin{cases}
    h_1x + h_2 x^2 & \text{ if } x \ge 0, \\
    -p_1 x + p_2 x^2 & \text{ if } x<0,
  \end{cases}
\end{equation}
where $h_1\ge 0$, $p_1\ge 0$, $h_2>0$ and $p_2>0$. In his proof,
condition (\ref{eq:costcondition}) is not needed any more as long as
$h_2>0$ and $p_2>0$.
Baccarin~\cite{Baccarin02} deferred the detailed proof for the
existence of a solution to the four-parameter free boundary problem
to an online supplement. Unfortunately, this document can no longer be
located over the Internet.
 Assuming $K=L>0$  and $k=\ell=0$,
Plehn-Dujowich~\cite{PlehnDujowich05} proved that a three-parameter control band
policy is optimal when the holding cost function $h$ satisfies
\begin{eqnarray}
  \label{eq:hc1}
&&\text{$h$ and $h'$ are continuous;} \\
&&\text{$h$ is strictly concave and single-peaked;}\\
&&\text{$|h|$, $|h'|$ and $|h''|$ are bounded by a polynomial.}
\end{eqnarray}
Both the linear cost in (\ref{eq:linearh}) and the quadratic cost in
(\ref{eq:quadratic}) do not satisfy the smoothness condition in
(\ref{eq:hc1}).

In this paper, when the holding cost function is assumed to be
general, satisfying Assumption \ref{assumption:h-discounted} in
Section~\ref{sec:model}, we prove that a four-parameter control band
policy is optimal.  Assumption~\ref{assumption:h-discounted} on the
convex holding cost function $h$ is considerable weaker than those in
literature.  The cost functions in
\cite{ConstantinidesRichard78,Baccarin02,PlehnDujowich05} all satisfy
Assumption \ref{assumption:h-discounted}.
Condition (\ref{eq:hlimit}) in Assumption \ref{assumption:h-discounted}
is analogs to (\ref{eq:costcondition}) and is automatically satisfied
for $h$ in (\ref{eq:quadratic}).  Similar to the companion
paper \cite{DaiYao11a}, we adopt the three-step lower-bound approach
in our proof. In the first step, we prove that if there exists a
``smooth'' test function $f$ that satisfies a set of differential
inequalities, the function $f$ dominates the value function at every
initial inventory level $x$.  In the second step, given a control band
policy, it is shown that the value function within the band can be
obtained as the unique solution to a second order differential
equation. In the third step, a solution to a free boundary problem is
shown to exist and satisfy the conditions for $f$ in the first step.

The result in step 1 is known as the ``verification theorem'' in
literature. All three prior papers
\cite{ConstantinidesRichard78,Baccarin02,PlehnDujowich05} invoked the
verification theorem in Richard~\cite{Richard77}, which in turn
generalized the pioneering work of Bensoussan and Lions
\cite{BensoussanLions84,BensoussanLions75}. This tutorial advocates
the lower bound approach that was also adopted by Harrison et.\ al
\cite{HarrisonSellkeTaksar83} and Harrison and Taksar
\cite{HarrisonTaksar83}. The advantage of this approach is that,
except for applying the standard It\^o formula, it is self-contained,
and therefore this approach can readily be rigorously adopted in other
related settings.

The free boundary problem is specified using the well known
``smooth-pasting'' method (see, e.g.,
\cite{BertolaCabellero90}). Solving the free boundary problem in Step
3 is the most difficult task.  We prove the existence of a $C^1$
solution to the free boundary problem that has four free parameters.
Though our proof is similar to the one in
\cite{ConstantinidesRichard78}, where a linear holding cost function
is used, our proof is considerably more difficult. Unlike the proof in
\cite{ConstantinidesRichard78}, our proof is also constructive so that
it leads naturally an algorithm to compute the four parameters of the
optimal control band.  Recently, Feng and Muthuraman
\cite{FengMuthuraman10} developed an algorithm to compute the
parameters of an optimal control band policy for the discounted
Brownian control problem. They illustrated the convergence of their
algorithm through some numerical examples. However, the convergence of
their algorithm was not established.

The rest of this paper is organized as follows.  In Section
\ref{sec:model}, we define our Brownian control problem.
In Section \ref{sec:ito-formula-lower-discounted} we
present a version of It\^o formula
that does not require the test function $f$ be $C^2$ function.  A
lower bound for all feasible policies is established in Section
\ref{sec:lowerbound-discounted}.  Section
\ref{sec:controlBand-discounted}
 shows that under a control band
policy, the value function within the band can be obtained
as a solution to a second order ordinary differential equation (ODE).
Under the assumption that a free-boundary problem has a
unique solution that has desired regularity properties, Section
\ref{sec:optimal-discounted-inpulse}
 proves that there is a
control band policy whose discounted cost achieves
the lower bound. Thus, the
control band policy is optimal among all feasible policies.
Section~\ref{sec:optimal-control-band-discounted} is a lengthy one
that devotes to the construction of the solution to the free-boundary
problem. In the section, the parameters for the optimal control band
policy are characterized.
Section~\ref{sec:optimal-control-band-discounted} constitutes the main
technical contribution of this paper.

\section{Impulse Brownian Control Models}
\label{sec:model}

Let $X=\{X(t), t\ge 0\}$ be a Brownian motion
with drift $\mu$ and variance $\sigma^2$, starting from
$x$. Then, $X$ has the following representation
\begin{displaymath}
  X(t) =  x + \mu t + \sigma W(t), \quad t\ge 0,
\end{displaymath}
where $W=\{W(t), t\ge 0\}$ is a standard Brownian
motion that has drift
$0$, variance $1$, starting from $0$.
We assume $W$ is  defined on  some filtered probability space
$(\Omega, \{{\cal F}_t\},  {\cal F}, \Prob)$ and $W$ is an
 $\{{\cal F}_t\}$-martingale. Thus, $W$ is also known as  an  $\{{\cal
   F}_t\}$-standard Brownian motion.
 We use $X$ to model the
\emph{netput process} of the firm.  For each $t\ge 0$, $X(t)$
represents the inventory level at time $t$ if no control has been
exercised by time $t$. The netput process will be controlled and the
actual inventory level at time $t$, after controls has been exercised,
is denoted by $Z(t)$.  The controlled process is denoted by $Z=\{Z(t),
t\ge 0\}$. With a slight abuse of terminology,
we call $Z(t)$ the inventory level at time $t$, although when
$Z(t)<0$, $\abs{Z(t)}$ is the backorder level at time $t$.

Controls are dictated by a policy.  A policy $\varphi$ is a pair of
stochastic processes $(Y_1, Y_2)$ that satisfies the following three
properties: (a) for each sample path $\omega\in \Omega$, $Y_i(\omega,
\cdot)\in \D$, where $\D$ is the set of  functions on
$\R_+=[0,\infty)$ that are right continuous on $[0, \infty)$
and have left limits in $(0, \infty)$, (b) for each $\omega$,
$Y_i(\omega, \cdot)$ is a nondecreasing function, (c) $Y_i$ is adapted
to the filtration $\{{\cal F}_t\}$, namely, $Y_i(t)$ is ${\cal
  F}_t$-measurable for each $t\ge 0$.  We call $Y_1(t)$ and $Y_2(t)$
the cumulative \emph{upward} and \emph{downward} adjustment,
respectively, of the
inventory in $[0, t]$. Under a given policy $(Y_1, Y_2)$, the
inventory level at time $t$ is given by
\begin{equation}
  \label{eq:semimartingaleRep}
  Z(t) = X(t) +Y_1(t) -Y_2(t)=x+\sigma W(t)+\mu t +Y_1(t)-Y_2(t) ,
  \quad t\ge 0.
\end{equation}
Therefore,  $Z$ is a semimartingale, namely,
a martingale $\sigma W$ plus a process that is of bounded variation.

Because $K$ is assumed to be positive,  we restrict upward controls
that have a
finitely many upward adjustment in a finite interval.
This is equivalent to requiring $Y_1$ to be piecewise constant
function on each sample path.  Under such an
upward control, the upward adjustment times can be listed as a
discrete sequence $\{T_1(n):n\ge 0\}$, where the $n$th upward
adjustment time can be defined recursively via
\begin{displaymath}
  T_1(n) = \inf\{t> T_1(n-1): \Delta Y_1(t)>0\},
\end{displaymath}
where, by convention, $T_1(0)=0$ and $\Delta Y_1(t)=Y_1(t)-Y_1(t-)$.
The amount of the $n$th upward adjustment is denoted by
\begin{displaymath}
  \xi_1(n)= Y_1(T_1(n))-Y_1(T_1(n)-) \quad n=0, 1, \ldots.
\end{displaymath}
It is clear that specifying such a upward adjustment policy $Y_1=\{Y_1(t), t\ge
0\}$ is equivalent to specifying a sequence of $\{(T_1(n), \xi_1(n)):
n\ge 0\}$. In particular, given the sequence, one has
\begin{equation}
  \label{eq:Y1N1xi1}
  Y_1(t) = \sum_{i=0}^{N_1(t)} \xi_1(i),
\end{equation}
and $N_1(t)=\max\{n\ge 0:T_1(n)\le t\}$ is the number of upward controls
$[0, t]$. Thus, it is sufficient to specify the sequence
$\{(T_1(n), \xi_1(n)):
n\ge 0\}$  to describe an upward adjustment policy. Similarly, since
$L>0$, it is sufficient to specify the sequence
$\{(T_2(n), \xi_2(n)):
n\ge 0\}$  to describe a downward adjustment policy
and
\begin{equation}
  \label{eq:Y2N2xi2}
  Y_2(t) = \sum_{i=0}^{N_2(t)} \xi_2(i).
\end{equation}
 Merging these two
sequences, we have  the
sequence $\{(T_n, \xi_n), n\ge 0\}$, where
$T_n$ is the $n$th adjustment time of the inventory and $\xi_n$ is the
amount of adjustment at time $T_n$. When $\xi_n>0$, the $n$th
adjustment is an upward adjustment and when $\xi_n<0$, the $n$th
adjustment is a downward adjustment. The policy  $(Y_1, Y_2)$ is
adapted if  $T_n$ is an $\{{\cal F}_{t}\}$-stopping time and
 each adjustment $\xi_n$ must be $\mathscr{F}_{T_n-}$ measurable,
In general, we allow an upward or downward adjustment at time
$t=0$.  By convention,  we set $Z(0-)=x$ and call $Z(0-)$ the
\emph{initial  inventory level}. By (\ref{eq:semimartingaleRep}),
\begin{displaymath}
  Z(0) = x + Y_1(0)-Y_2(0),
\end{displaymath}
which can be different from the initial inventory level $Z(0-)$.

Under a feasible policy $\varphi=\{(Y_1(t), Y_2(t)\}$
with initial inventory level $Z(0-)=x$ and a discount rate $\beta>0$,
the  expected total discounted cost $\DC(x, \varphi)$ is defined to be
\begin{eqnarray}
\lefteqn{\DC(x,\varphi)
=\mathbb{E}_x\Big[\int_0^{\infty} e^{-\beta t} h(Z(t))dt
} \label{eq:DC-1} \\
&& {}
+
\int_0^{\infty} e^{-\beta t}\bigl(K dN_1(t)+L dN_2(t)+k dY_1(t)+\ell dY_2(t)\bigr)
\Big]. \nonumber
\end{eqnarray}
where $\E_x$ is the expectation operator conditioning on the initial
inventory level being $Z(0-)=x$.
Because of (\ref{eq:Y1N1xi1}) and (\ref{eq:Y2N2xi2}), this Brownian
inventory control model is called the impulse Brownian control model.
Clearly,  we  need to restrict our feasible policies to satisfy
\begin{eqnarray}
  \label{eq:regularPolicy-discounted}
  \mathbb{E}_x\Bigl[\sum_{n=0}^{\infty} e^{-\beta
    T_n}\bigl(1+\abs{\xi_n}\bigr)\Bigr]<\infty.
\end{eqnarray}
Otherwise, $\DC(x, \varphi)=\infty$. We assume  the inventory cost function
$h:\R\to \R_+$ satisfies the following assumption.
\begin{assumption}
\label{assumption:h-discounted}
Assume that the cost function $h$ satisfies the following conditions:
(a) it is continuous and convex; (b) there
exists an $a$ such
that $h\in C^2(\R)$ except at $a$ and $h(a)=0$; (c)  $h'(x)\le 0$ for $x<a$ and
$h'(x)\ge 0$ for $x>a$; (d)
\begin{equation}
  \label{eq:hlimit}
  \lim_{ {x}\uparrow\infty} h'(x) >\ell \beta\quad \text{and} \quad
  \lim_{ {x}\downarrow -\infty} h'(x) <-k\beta;
\end{equation}\\
(e)
$h''(x)$ has smaller order than $e^{\lambda_1 x}$ as $x\uparrow \infty$, that is
\begin{eqnarray}
&&\lim_{x\uparrow \infty}\frac{h''(x)}{e^{\lambda_1 x}}=0,\label{eq:h''Lim1-discounted}\\
&&\int_a^{+\infty} e^{-\lambda_1 y}h''(y)dy<\infty,\label{eq:h''Int1-discounted}
\end{eqnarray}
where $\lambda_1= \Bigl[ (\mu^2+ 2\beta\sigma^2)^{1/2}-\mu\Bigr]/\sigma^2>0$.\\
(f) $h''(x)$ has smaller order than $e^{-\lambda_2 x}$ as $x\downarrow -\infty$,
that is
\begin{eqnarray}
&&\lim_{x\downarrow -\infty}\frac{h''(x)}{e^{-\lambda_2 x}}=0,  \label{eq:h''Lim-discounted}\\
&&\int_{-\infty}^a e^{\lambda_2 y}h''(y)dy<\infty. \label{eq:h''Int2-discounted}
\end{eqnarray}
where  $\lambda_2=\Bigl[ (\mu^2+ 2\beta\sigma^2)^{1/2}+\mu\Bigr]/\sigma^2>0$
\end{assumption}

\begin{remark}
(a) If $h$ is given by \eqref{eq:linearh}, \eqref{eq:hlimit}
becomes \eqref{eq:costcondition}, which is consistent with (13) in
\cite{ConstantinidesRichard78}. (b) The continuous and convex
  holding cost
function $h$ can be relaxed to be  continuously differentiable once
and twice at all but a finitely many points.
(c) When $\lim_{x\to\infty}h'(x)\le \ell \beta$, it follows the
  same reasoning as in \cite{ConstantinidesRichard78} that  it will
  never be optimal to reduce the inventory level as long as $L>0$. Similarly,
 when $\lim_{x\to-\infty}h'(x)\ge  k\beta$, it will
  never be optimal to increase the inventory level as long as $K>0$.
\end{remark}

The following elementary lemma on the holding cost function is useful
in later development.
\begin{lemma} \label{lem:hproperty}
(a) Under Assumption~\ref{assumption:h-discounted},
\begin{eqnarray}
\label{eq:limint-discounted}
&&\lim_{x\downarrow -\infty}\frac{\int_x^a e^{-\lambda_1
    (y-a)}h''(y)dy}{e^{-(\lambda_1+\lambda_2) (x-a)}}
=0, \\
\label{eq:limint-discounted2}
&& \lim_{x\uparrow \infty}\frac{\int^x_a e^{\lambda_2
    (y-a)}h''(y)dy}{e^{(\lambda_1+\lambda_2) (x-a)}}
=0.
\end{eqnarray}
(b)  Under Assumption~\ref{assumption:h-discounted},
\begin{eqnarray}
&& \lim_{x\uparrow \infty}\frac{\lambda_2\int_a^{x} e^{\lambda_2
    (y-a)}h'(y)dy}{ e^{\lambda_2 (x-a)}}=\lim_{x\uparrow \infty}
h'(x),\label{eq:lim=h'-discounted2}  \\
&& \lim_{x\downarrow -\infty}\frac{\lambda_1\int^a_{x} e^{-\lambda_1
    (y-a)}h'(y)dy}{ e^{-\lambda_1 (x-a)}}=\lim_{x\uparrow -\infty}
h'(x).\label{eq:lim=h'-discounted3}
\end{eqnarray}

\end{lemma}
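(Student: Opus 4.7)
Both parts of the lemma reduce to direct applications of L'Hopital's rule, exploiting the exponential growth/decay conditions on $h'$ and $h''$ provided by Assumption~\ref{assumption:h-discounted}. I would dispatch part (b) first and then use the same idea, with an extra case split, for part (a).

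For part (b), consider \eqref{eq:lim=h'-discounted2}. The denominator $e^{\lambda_2(x-a)}$ diverges to $+\infty$ as $x\uparrow\infty$, and convexity of $h$ implies $h'$ is nondecreasing on $(a,\infty)$, so the limit $\lim_{x\uparrow\infty}h'(x)$ exists in $(\ell\beta,+\infty]$ by \eqref{eq:hlimit}. Differentiating numerator and denominator with respect to $x$ yields the ratio
$$
\frac{\lambda_2 e^{\lambda_2(x-a)}h'(x)}{\lambda_2 e^{\lambda_2(x-a)}}=h'(x),
$$
whose limit as $x\uparrow\infty$ is exactly the desired right-hand side; L'Hopital's rule in its $\infty/\infty$ form (which admits extended-real limits) then delivers \eqref{eq:lim=h'-discounted2}. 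The limit \eqref{eq:lim=h'-discounted3} is handled identically on $(-\infty,a)$, using monotonicity of $h'$ on that interval together with the lower bound in \eqref{eq:hlimit}.

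For part (a), I would treat \eqref{eq:limint-discounted} in detail; the proof of \eqref{eq:limint-discounted2} is symmetric. Convexity gives $h''\ge 0$ a.e., so the numerator $\int_x^a e^{-\lambda_1(y-a)}h''(y)\,dy$ is nondecreasing as $x\downarrow-\infty$, and the denominator $e^{-(\lambda_1+\lambda_2)(x-a)}$ diverges to $+\infty$. I would then split into two cases. If the improper integral $\int_{-\infty}^a e^{-\lambda_1(y-a)}h''(y)\,dy$ is finite, the numerator is bounded and the ratio trivially tends to $0$. Otherwise both numerator and denominator diverge, so L'Hopital applies, and one differentiation reduces the ratio to
$$
\frac{-e^{-\lambda_1(x-a)}h''(x)}{-(\lambda_1+\lambda_2)e^{-(\lambda_1+\lambda_2)(x-a)}}=\frac{h''(x)}{(\lambda_1+\lambda_2)\,e^{-\lambda_2(x-a)}},
$$
which vanishes as $x\downarrow-\infty$ by \eqref{eq:h''Lim-discounted}. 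For \eqref{eq:limint-discounted2} the same dichotomy, with \eqref{eq:h''Lim1-discounted} replacing \eqref{eq:h''Lim-discounted}, produces the analogous ratio $h''(x)/[(\lambda_1+\lambda_2)\,e^{\lambda_1(x-a)}]\to 0$.

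The main (mild) obstacle is simply verifying the hypotheses of L'Hopital's rule in part (a): one must separately handle the case in which the improper integral of $h''$ already converges, so that the ``$\infty/\infty$'' form of the rule is not misapplied to a bounded numerator. Beyond this bookkeeping the argument is elementary, and it is worth noting that the integrability conditions \eqref{eq:h''Int1-discounted} and \eqref{eq:h''Int2-discounted} are not required for this lemma — only the pointwise relative decay of $h''$ against the exponentials is used here, with the integrability conditions reserved for later construction of the free-boundary solution.
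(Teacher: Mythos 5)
Your proposal is correct and follows essentially the same line as the paper's own proof: part (a) is handled by the same dichotomy (bounded numerator vs.\ L'H\^opital in the divergent case, reducing to \eqref{eq:h''Lim-discounted} or \eqref{eq:h''Lim1-discounted}), and part (b) is also a single application of L'H\^opital giving $\lim h'(x)$. The one place you diverge from the paper is in part (b): the paper first shows the numerator $\int_a^x e^{\lambda_2(y-a)}h'(y)\,dy$ actually diverges (using that $h'(x)\ge c_1>0$ eventually, which follows from \eqref{eq:hlimit}), so that the textbook $\infty/\infty$ form of L'H\^opital applies, whereas you skip that verification and implicitly rely on the stronger form of L'H\^opital (e.g.\ Rudin, Thm.~5.13) that requires only the denominator to diverge. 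That is a legitimate shortcut, but worth flagging since not every reader will have the stronger form in mind; if you prefer to stay with the standard statement, one line noting $\int_a^x e^{\lambda_2(y-a)}h'(y)\,dy\ge c_1\int_{x''}^x e^{\lambda_2(y-a)}\,dy\to\infty$ closes the gap. Your closing observation that \eqref{eq:h''Int1-discounted} and \eqref{eq:h''Int2-discounted} are not needed for this lemma is accurate.
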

\begin{proof}
(a) We prove \eqref{eq:limint-discounted}. The proof of
\eqref{eq:limint-discounted2}  is similar and is omitted.
 If
 \begin{displaymath}
\lim_{x\downarrow -\infty}\int_x^a e^{-\lambda_1 (y-a)}h''(y)dy<\infty,
 \end{displaymath}
\eqref{eq:limint-discounted} clearly holds.
Now assume that
\begin{equation*}
\lim_{x\downarrow -\infty}\int_x^a e^{-\lambda_1 (y-a)}h''(y)dy=\infty.
\end{equation*}
By using the \emph{L' H\^{o}pital rule}, one has
\begin{eqnarray*}
\lim_{x\downarrow -\infty}\frac{\int_x^a e^{-\lambda_1 (y-a)}h''(y)dy}{e^{-(\lambda_1+\lambda_2) (x-a)}}
&=&\lim_{x\downarrow -\infty}\frac{-e^{-\lambda_1 (x-a)}h''(x)}{-(\lambda_1+\lambda_2)e^{-(\lambda_1+\lambda_2) (x-a)}}\\
&=&\lim_{x\downarrow -\infty}\frac{h''(x)}{(\lambda_1+\lambda_2)e^{-\lambda_2 (x-a)}}\\
&=&0,
\end{eqnarray*}
where the last equality is due to \eqref{eq:h''Lim-discounted}.

(b) We prove (\ref{eq:lim=h'-discounted2}). The proof of
(\ref{eq:lim=h'-discounted2})  is similar and is omitted.

The first part of \eqref{eq:hlimit} implies that
there exist a constant $c_1>0$ and $x''\in(a,\infty)$ such that
for any $x\geq x''$,
\begin{eqnarray*}
h'(x)\geq c_1,
\end{eqnarray*}
which yields that
\begin{eqnarray}
\lim_{x\uparrow \infty} \int_a^{x} e^{\lambda_2 (y-a)}h'(y)dy
&\geq& \lim_{x\uparrow \infty} \int_{x''}^{x} e^{\lambda_2 (y-a)}h'(y)dy\nonumber\\
&\geq&c_1\cdot \lim_{x\uparrow \infty} \int_{x''}^{x} e^{\lambda_2 (y-a)}dy\nonumber\\
&=&\infty,\label{eq:lim=infty-discounted}
\end{eqnarray}
where the first inequality is due to the assumption $h'(x)\geq 0$ for
$x>a$.
By using the \emph{L' H\^{o}pital rule}, one has
\begin{eqnarray*}
 \lim_{x\uparrow \infty}\frac{\lambda_2\int_a^{x} e^{\lambda_2
    (y-a)}h'(y)dy}{ e^{\lambda_2 (x-a)}}
&=&\lim_{x\uparrow \infty}\frac{\lambda_2 e^{\lambda_2
      (x-a)}h'(x)}{ \lambda_2e^{\lambda_2 (x-a)}}\nonumber\\
&=&\lim_{x\uparrow \infty}
h'(x).
\end{eqnarray*}
\end{proof}

\section{It\^{o} Formula}
\label{sec:ito-formula-lower-discounted}
In this section, we present the It\^{o} formula, tailored to
the discounted setting.
\begin{lemma}
\label{lem:Ito-discounted}
Assume that $f\in C^1(\R)$ and $f'$ is absolutely continuous such that
$f'(b)-f'(a)=\int_a^bf''(u)du$ for any $a<b$ with
$f''$ locally in $L^1(\R)$.
Then
\begin{eqnarray}
e^{-\beta t}f(Z(t)) & =&
f(Z(0))+\int_0^t e^{-\beta s}\bigl(\Gamma f(Z(s))-\beta f(Z(s))\bigr)ds
 \label{eq:ito-discouned}\\
&&{}+ \sigma \int_0^t e^{-\beta s}  f'(Z(s))dW(s)+ \sum_{0< s \le t}e^{-\beta s} \Delta
f(Z(s)), \nonumber
\end{eqnarray}
where
\begin{equation}
  \label{eq:Gamma}
\Gamma f(x) = \frac{1}{2}\sigma^2 f''(x) + \mu f'(x)  \quad \text{ for
each } x\in \R \text{ such that $f''(x)$ exists},
\end{equation}
is the generator of the $(\mu, \sigma^2)$-Brownian motion $X$, and
$\int_0^t e^{-\beta t} f'(Z(s))dW(s)$ is interpreted as the It\^{o}
integral.
\end{lemma}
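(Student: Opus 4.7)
The plan is to first establish \eqref{eq:ito-discouned} under the additional assumption $f\in C^2(\R)$ by a direct application of the classical It\^{o} formula, and then to remove the $C^2$ hypothesis by a mollification argument.

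For the $C^2$ case, I would decompose $Z=Z^c+J$ where $Z^c(t)=x+\mu t+\sigma W(t)$ is the continuous semimartingale part (with quadratic variation $\sigma^2 t$) and $J(t)=Y_1(t)-Y_2(t)$ is a pure-jump process of bounded variation; since $K,L>0$, the set-up in Section~\ref{sec:model} forces $Y_1,Y_2$ to be piecewise constant, with only finitely many jumps on any bounded interval almost surely. Applying the classical It\^{o} formula for continuous semimartingales plus jumps to the $C^{1,2}$ function $g(t,z)=e^{-\beta t}f(z)$ along $Z$: the $\partial_t g$ term contributes $-\beta e^{-\beta s}f(Z(s))\,ds$; the $\partial_z g$ term against the continuous martingale part produces the It\^{o} integral $\sigma\int_0^t e^{-\beta s}f'(Z(s))\,dW(s)$; the drift and the $\tfrac12\sigma^2 f''$ quadratic-variation contribution of $Z^c$ combine into $\int_0^t e^{-\beta s}\Gamma f(Z(s))\,ds$ with $\Gamma$ as in~\eqref{eq:Gamma}; and the pure-jump component of $Z$ contributes $\sum_{0<s\le t}e^{-\beta s}\Delta f(Z(s))$, yielding exactly~\eqref{eq:ito-discouned}.

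For the general $f\in C^1(\R)$ with $f'$ absolutely continuous and $f''$ locally integrable, I would mollify: pick a smooth compactly supported mollifier $\phi$, set $\phi_n(x)=n\phi(nx)$, and define $f_n=f\ast\phi_n$. Then $f_n\in C^\infty(\R)$, with $f_n\to f$ and $f_n'\to f'$ locally uniformly, while $f_n''\to f''$ in $L^1_{\mathrm{loc}}$. Localize by $\tau_N=\inf\{t\ge 0:\abs{Z(t)}\ge N\}$ so that $Z$ remains in $[-N,N]$ on $[0,t\wedge\tau_N]$, apply the $C^2$ version to each $f_n$, and pass to the limit as $n\to\infty$. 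Local uniform convergence of $f_n$ on $[-N,N]$ dispatches the boundary terms $f_n(Z(t\wedge\tau_N))$ and $f_n(Z(0))$; local uniform convergence of $f_n'$ together with the It\^{o} isometry handles the stochastic integral; and the jump sum converges termwise because it has at most finitely many nonzero summands on the localized interval.

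The main obstacle is the $f''$ contribution: showing that $\int_0^{t\wedge\tau_N}e^{-\beta s}f_n''(Z(s))\,ds$ converges to the analogous integral for $f''$ when $f_n''\to f''$ only in $L^1_{\mathrm{loc}}$ and not pointwise. I would handle it through the occupation-density estimate for $Z^c$: on each inter-jump interval $[T_i\wedge\tau_N,T_{i+1}\wedge\tau_N]$, $Z$ coincides with a shifted Brownian motion with drift whose one-dimensional marginal density is uniformly bounded on $[-N,N]\times[0,t]$, so for any bounded measurable $g$ one has
\begin{equation*}
\E\Bigl[\int_{T_i\wedge\tau_N}^{T_{i+1}\wedge\tau_N}\abs{g(Z(s))}\,ds\Bigr]\le C_{t,N}\int_{-N}^{N}\abs{g(y)}\,dy.
\end{equation*}
Applied to $g=f_n''-f''$ and summed over the a.s.\ finitely many inter-jump pieces up to $t\wedge\tau_N$, this gives $L^1(\Prob)$-convergence of the Lebesgue integral, hence almost-sure convergence along a subsequence. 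Finally, letting $N\to\infty$ and using dominated convergence (with $\tau_N\uparrow\infty$ a.s.\ since $Z$ is finite) removes the localization and gives~\eqref{eq:ito-discouned} on $[0,t]$.
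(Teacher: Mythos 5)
Your proof is correct in outline but takes a genuinely different and much longer route than the paper's. The paper's proof is essentially a two-line argument: it cites the It\^{o}-type formula (3.2) established in the companion paper \cite{DaiYao11a} for $f(Z(t))$ with exactly this class of non-$C^2$ test functions, and then applies the integration-by-parts formula for semimartingales to the product $e^{-\beta t}\cdot f(Z(t))$ to introduce the discount factor. You instead prove the result from scratch: apply the classical It\^{o} formula to the $C^{1,2}$ function $g(t,z)=e^{-\beta t}f(z)$ (thereby folding in the exponential directly rather than via a product rule), then mollify and pass to the limit. Your approach is self-contained, which is in the spirit of the tutorial, whereas the paper's leans on the companion paper; the price is that you must control the $f_n''$ term, which is where the only real issues lie.

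Two steps in your mollification argument need tightening. First, the claim that the one-dimensional marginal density of a Brownian motion with drift is ``uniformly bounded on $[-N,N]\times[0,t]$'' is false: the transition density $p_s(z,y)=(2\pi\sigma^2 s)^{-1/2}\exp(-(y-z-\mu s)^2/(2\sigma^2 s))$ blows up as $s\downarrow 0$. What saves the estimate is that the blow-up is time-integrable, $\int_0^t \sup_y p_s(z,y)\,ds=\sqrt{2t/(\pi\sigma^2)}<\infty$, giving $\E_z\bigl[\int_0^t |g(X(s))|\,ds\bigr]\le C_t\|g\|_{L^1(\R)}$ with a constant $C_t$ independent of $z$ and of $N$; you should replace the ``uniformly bounded'' claim by this integrated bound. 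Second, when you sum the per-piece estimate over the inter-jump intervals in $[0,t\wedge\tau_N]$, the number of such intervals is the random variable $N(t\wedge\tau_N)+1$, so after taking expectations the bound you obtain is $C_t\,\E_x[N(t)+1]\,\|f_n''-f''\|_{L^1([-N,N])}$. You therefore need $\E_x[N(t)]<\infty$, which you do not justify; it does hold, because the admissibility condition \eqref{eq:regularPolicy-discounted} gives $\E_x\bigl[\sum_n e^{-\beta T_n}\bigr]<\infty$ and $\sum_{n\le N(t)} e^{-\beta T_n}\ge (N(t)+1)e^{-\beta t}$, but this should be stated. With these two repairs the argument is sound.
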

\begin{proof}
Using (3.2) in \cite{DaiYao11a} and the integration by parts formula for
semimartingales (see, for example, Page 83 of \cite{Protter05}),
we  have \eqref{eq:ito-discouned}.
\end{proof}

\section{Lower Bound}
\label{sec:lowerbound-discounted}
In this section, we state and prove a theorem that establishes a lower bound for
the  optimal expected total discounted cost. This theorem is closely related to the
``verification theorem" in literature. Its proof is self contained, using the It\^{o} formula
in Section \ref{sec:ito-formula-lower-discounted}.

Define
\begin{eqnarray}
\label{eq:phi-discounted}
\phi(\xi)= \left \{
\begin{array}{ll}
   K+k\xi & \mbox{if }\xi>0, \\
   0 & \mbox{if }\xi=0, \\
   L-l\xi & \mbox{if }\xi<0.
  \end{array}
\right.
\end{eqnarray}

\begin{theorem}\label{thm:lowerbound-discounted}
  Suppose that $f\in C^1(\R)$ and $f'$ is absolutely continuous with $f''$
  locally in $L^1(\R)$. Suppose that there exists a constant $M>0$ such
  that  $|f'(x)|\leq M $ for all $x\in \R$.
 Assume further that
  \begin{eqnarray}
    && \Gamma f(x)-\beta f(x)+h(x)\geq 0 \mbox{ for almost all $x\in
      \mathbb{R}$},\label{eq:lbPoission-discounted}\\
    && f(y) - f(x)\le K+ k(x-y) \mbox{ for $y<x$},\label{eq:lbK-discounted} \\
    && f(y) - f(x)\le L+\ell(y-x) \mbox{ for $x<y$}.\label{eq:lbL-discounted}
  \end{eqnarray}
Then $\DC(x,\varphi)\geq f(x)$ for each feasible policy $\varphi$  and each
initial state $Z(0-)=x\in \mathbb{R}$.
\end{theorem}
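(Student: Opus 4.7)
The plan is to apply the It\^o formula from Lemma \ref{lem:Ito-discounted} to $e^{-\beta t}f(Z(t))$, take expectation under $\E_x$, and rearrange the resulting four terms to match the cost structure in (\ref{eq:DC-1}). Hypotheses (\ref{eq:lbPoission-discounted})--(\ref{eq:lbL-discounted}) will supply a lower bound on the continuous drift and on each jump, while the uniform bound $|f'|\le M$ will upgrade the It\^o integral to a true martingale and, crucially, control the boundary term as $t\uparrow\infty$.

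After applying Lemma \ref{lem:Ito-discounted}, I would first dispose of the stochastic integral: since $|f'(Z(s))|\le M$ gives $\E_x\bigl[\int_0^t e^{-2\beta s}(f'(Z(s)))^2\,ds\bigr]\le M^2 t<\infty$, this integral is a square-integrable martingale with zero expectation. The drift integrand satisfies $\Gamma f(Z(s))-\beta f(Z(s))\ge -h(Z(s))$ by (\ref{eq:lbPoission-discounted}), the Lebesgue-null exceptional set for $f''$ being harmless under the $ds$-integral.

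For the jump sum at times $T_n>0$, if $\xi_n>0$ then (\ref{eq:lbK-discounted}) applied with $y=Z(T_n-)<Z(T_n)$ gives $\Delta f(Z(T_n))\ge -(K+k\xi_n)=-\phi(\xi_n)$; if $\xi_n<0$, (\ref{eq:lbL-discounted}) gives $\Delta f(Z(T_n))\ge -(L-\ell\xi_n)=-\phi(\xi_n)$. The It\^o jump sum excludes $s=0$, so I would separately handle any adjustment at time $0$ via the same two inequalities, producing $f(Z(0))\ge f(x)-\phi(\xi_0)$. Combining everything and using the convention $T_0=0$, I obtain after rearrangement
\begin{equation*}
f(x)\le \E_x\bigl[e^{-\beta t}f(Z(t))\bigr]+\E_x\Bigl[\int_0^t e^{-\beta s}h(Z(s))\,ds\Bigr]+\E_x\Bigl[\sum_{T_n\le t} e^{-\beta T_n}\phi(\xi_n)\Bigr].
\end{equation*}

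The main obstacle is showing $\E_x[e^{-\beta t}f(Z(t))]\to 0$ as $t\to\infty$; once this is done, monotone convergence on the remaining two terms yields $f(x)\le \DC(x,\varphi)$. Because $|f'|\le M$ gives the linear bound $|f(z)|\le |f(0)|+M|z|$, it suffices to prove $e^{-\beta t}\E_x\abs{Z(t)}\to 0$. From (\ref{eq:semimartingaleRep}) I would bound $\abs{Z(t)}\le \abs{x}+\abs{\mu}t+\sigma\abs{W(t)}+\sum_{T_n\le t}\abs{\xi_n}$. The first three contributions vanish after multiplication by $e^{-\beta t}$. For the adjustment piece, using $e^{-\beta t}\le e^{-\beta T_n}$ on $\{T_n\le t\}$ produces the integrable majorant $\sum_n e^{-\beta T_n}\abs{\xi_n}$ from (\ref{eq:regularPolicy-discounted}); splitting the sum into a finite head that vanishes pointwise and a tail made arbitrarily small by convergence of the series shows $e^{-\beta t}\sum_{T_n\le t}\abs{\xi_n}\to 0$ almost surely, whence dominated convergence finishes the argument.
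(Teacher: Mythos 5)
Your proof is correct and follows the same overall skeleton as the paper's: apply the It\^o formula from Lemma~\ref{lem:Ito-discounted}, estimate the drift term via (\ref{eq:lbPoission-discounted}), estimate the jump terms (including the initial jump at $t=0$) via (\ref{eq:lbK-discounted})--(\ref{eq:lbL-discounted}) to get $\Delta f(Z(T_n))\ge -\phi(\xi_n)$, kill the stochastic integral using $|f'|\le M$, take expectations, and send $t\to\infty$. The one genuinely different ingredient is how you dispose of the boundary term $\E_x[e^{-\beta t}f(Z(t))]$. The paper only needs to show a $\liminf$ is zero and does so by invoking a chain of external results: inequality (7.5) of Taksar, Fubini's theorem, and Lemma~4.1 of Feng and Muthuraman. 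You instead prove the full limit $\E_x[e^{-\beta t}\nu(t)]\to 0$ by a self-contained argument: the integrability in (\ref{eq:regularPolicy-discounted}) makes $\sum_n e^{-\beta T_n}\abs{\xi_n}$ finite a.s., so splitting $e^{-\beta t}\nu(t)$ into a finite head (which decays to zero as $t\to\infty$) and a tail dominated by $\sum_{n>N}e^{-\beta T_n}\abs{\xi_n}<\varepsilon$ (using $e^{-\beta t}\le e^{-\beta T_n}$ on $\{T_n\le t\}$) gives a.s. convergence, and the same series serves as an $L^1$ majorant for dominated convergence. Your route is more elementary and better realizes the paper's stated goal of a proof that is self-contained modulo It\^o, at the small cost of a slightly longer final step. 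One cosmetic difference: the paper works with the truncation $\bigl(f(Z(t))\bigr)^+$, which is automatically well-defined even before verifying integrability of $Z(t)$, whereas you bound $\abs{f(Z(t))}$ and must (and do, implicitly via (\ref{eq:regularPolicy-discounted})) check $\E_x\abs{Z(t)}<\infty$; both are fine.
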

\begin{proof}
By It\^{o} formula \eqref{eq:ito-discouned},
\begin{eqnarray}
e^{-\beta t}f(Z(t)) & =&
f(Z(0-))+\int_0^t e^{-\beta s}\bigl(\Gamma f(Z(s))-\beta f(Z(s))\bigr)ds
\nonumber \\
&&{}+ \sigma \int_0^t e^{-\beta s}  f'(Z(s))dW(s)+ \sum_{0\le s \le t}e^{-\beta s} \Delta
f(Z(s))\nonumber\\
&\ge& f(Z(0-))-\int_0^t e^{-\beta s}h(Z(s))ds
+ \sigma \int_0^t e^{-\beta s}  f'(Z(s))dW(s) \nonumber\\
&&{}+ \sum_{0\le s \le t}e^{-\beta s} \Delta
f(Z(s)),\label{eq:itoinequality-discounted}
\end{eqnarray}
where the inequality is due to \eqref{eq:lbPoission-discounted}.
By (\ref{eq:Y1N1xi1}) and (\ref{eq:Y2N2xi2}), the control $\{(Y_1(t),
Y_2(t)), t\ge 0\}$ is equivalent to specifying a sequence $\{(T_n, \xi_n): n=0, 1,
\ldots \}$.
 Conditions (\ref{eq:lbK-discounted}) and (\ref{eq:lbL-discounted}) imply
that
 and $\Delta f (Z(T_n))\ge
- \phi(\xi_n)$, where $\phi$ is given by \eqref{eq:phi-discounted}.
Therefore, (\ref{eq:itoinequality-discounted}) leads to
\begin{eqnarray}
  \label{eq:itoinequalityDiscounted2}
\lefteqn{ e^{-\beta t}f(Z(t))} \\
&& {} \ge f(Z(0-)) -\int_0^t e^{-\beta s}h(Z(s))ds + \sigma \int_0^t
  e^{-\beta s}f'(Z(s)) dW(s) - \sum_{n=0}^{N(t)} e^{-\beta T_n}\phi(\xi_n)\nonumber
\end{eqnarray}
for each $t\ge 0$.
Fix an $x\in \R$. We assume that
\begin{displaymath}
  \E_x\biggl ( \int_0^t e^{-\beta s}h (Z(s))ds + \sum_{n=0}^{N(t)}e^{-\beta T_n}\phi(\xi_n) \biggr )< \infty
\end{displaymath}
for each $t>0$. Otherwise, $\DC(x, \varphi )=\infty$ and $\DC(x,
\varphi)\ge f(x)$  is trivially satisfied. Because $\abs{f'(x)}\le
M$, one has
$\E_x{\int_0^te^{-\beta s}f'(Z(s))dW(s)}=0$. Meanwhile
\begin{displaymath}
f(Z(t)) \le \bigl
(f(Z(t)) \bigr )^+
\end{displaymath}
and $\E_x \bigl[e^{-\beta t}\bigl(f(Z(t)) \bigr )^+\bigl]$ is well defined, though
it can be $\infty$,  where, for a $b\in \R$, $b^+=\max(b, 0)$.
Taking $\E_x$ on the both
 sides of (\ref{eq:itoinequalityDiscounted2}) and noting $f(Z(0-))=f(x)$, we have
 \begin{displaymath}
  \E_x\bigl[e^{-\beta t}\bigl(f(Z(t)) \bigr )^+\bigl]\ge  f(x)-
  \E_x\biggl ( \int_0^t e^{-\beta s}h (Z(s))ds + \sum_{n=0}^{N(t)}e^{-\beta T_n}\phi(\xi_n) \biggr ).
 \end{displaymath}
Taking limit as $t\to\infty$, one has
\begin{equation}
  \label{eq:interInequalityDiscounted}
\liminf_{t\to\infty} \biggl[ \E_x\biggl ( \int_0^te^{-\beta s}h (Z(s))ds +
\sum_{n=0}^{N(t)}e^{-\beta T_n}\phi(\xi_n) \biggr )  +  \E_x\bigl[e^{-\beta t}\bigl(f(Z(t)) \bigr )^+\bigl] \biggr] \ge f(x).
\end{equation}

The boundedness of $f'$ implies that
\begin{eqnarray*}
\bigl(f(x)\bigr)^+\le M(1+\abs{x}),
\end{eqnarray*}
which further implies that
\begin{eqnarray}
\label{eq:f<-discounted}
\bigl(f(Z(t)) \bigr )^+\le M(1+\abs{Z(t)})\le M(1+\abs{x}+\abs{\mu}t+\sigma\abs{W(t)}+\sum_{n=0}^{N(t)}\abs{\xi_n}).
\end{eqnarray}
The following arguments follow the ones  on Page 842 of
\cite{FengMuthuraman10}.
Let $\nu(t)=\sum_{n=0}^{N(t)}\abs{\xi_n}$. Then
\eqref{eq:regularPolicy-discounted} implies
\begin{eqnarray*}
\E_x\Big[\int_0^{\infty} e^{-\beta t} d \nu(t)\Big]<\infty.
\end{eqnarray*}
From (7.5) of Taksar \cite{Taksar97}, we have
\begin{eqnarray*}
\E_x\Big[\int_0^{\infty} e^{-\beta t} \nu(t)dt \Big]  \le
\frac{1}{\beta}\E_x\Big[\int_0^{\infty} e^{-\beta t} d \nu(t)\Big]
<\infty.
\end{eqnarray*}
Applying Fubini's theorem, we have
\begin{eqnarray*}
\int_0^{\infty} e^{-\beta t} \E_x\bigl[\nu(t)\bigr]dt <\infty,
\end{eqnarray*}
which, together with Lemma 4.1 of \cite{FengMuthuraman10}, implies
\begin{eqnarray*}
\liminf_{t\to\infty} e^{-\beta t} \E_x[\nu(t)]=0.
\end{eqnarray*}
Therefore, \eqref{eq:f<-discounted} implies that
\begin{eqnarray*}
\liminf_{t\to\infty} \E_x\bigl[e^{-\beta t}\bigl(f(Z(t)) \bigr )^+\bigl]
\le \liminf_{t\to\infty} \E_x\bigl[e^{-\beta t}\bigl(M(1+\abs{x}+\abs{\mu}t+\sigma\abs{W(t)}+\nu(t) \bigr )\bigl]
=0.
\end{eqnarray*}
\end{proof}

\section{Control Band Policies}
\label{sec:controlBand-discounted}
We use  $\{d, D, U, u\}$ to denote the
control band policy associated with parameters $d$, $D$, $U$, and $u$
with
$d<D< U<u$.
Let us fix a control band policy $\varphi=\{d, D, U, u\}$
and an initial inventory level $Z(0-)=x$.
The adjustment amount $\xi_n$ of
the control band policy is given by
\begin{displaymath}
 \xi_0=
   \begin{cases}
   D-x & \mbox{if } x\leq d, \\
   0 & \mbox{if } d<x<u, \\
   U-x & \mbox{if }x\geq u,
   \end{cases}
\end{displaymath}
and for $n=1,2,...$,
\begin{displaymath}
  \xi_n=
  \begin{cases}
   D-d & \mbox{if }Z(T_n-)=d, \\
   U-u & \mbox{if }Z(T_n-)=u,
  \end{cases}
\end{displaymath}
where again $Z({t-})$ denotes the left limit at time $t$, $T_0=0$ and
\begin{displaymath}
  T_n = \inf\bigl\{ t> T_{n-1}: Z(t)\in\{d, u\}\bigr\}
\end{displaymath}
is the $n$th adjustment time.  (By convention, we assume $Z$ is right
continuous having left limits.)  Our first task is to obtain an
expression for the \emph{value function} $\bar V$, where $\bar V(x)$ is the
expected total discounted cost when the initial inventory level is
$x$. We first present the following lemma.
\begin{theorem}
\label{thm:control band-discounted}
Assume that we fix a control band policy $\varphi=\{d, D, U, u\}$. If
there
exists a twice continuously differentiable
function
$V:[d,u]\rightarrow \R$ that satisfies
\begin{equation}
\Gamma V(x)-\beta V(x)+h(x)=0 \quad d\leq x\leq u,\label{eq:Poisson equ-discounted}
\end{equation}
with boundary conditions
\begin{eqnarray}
 && V(d)-V(D)=K+k(D-d),\label{eq:V(d)-V(D)-discounted}\\
 && V(u)-V(U)=L+l(u-U),\label{eq:V(u)-V(U)-discounted}
\end{eqnarray}
then for each starting point $x\in \R$,
the expected total discounted cost $DC(x,\varphi)$  is given by
\begin{eqnarray}
\label{eq:barV-discounted}
\bar{V}(x)=
\left\{
\begin{array}{ll}
V(D)+K+k(D-x) &\text{for $x\in(-\infty,d]$},\\
V(x) & \text{for $x\in(d,u)$},\\
V(U)+L-\ell(U-x) & \text{for $x\in[u,\infty)$},
\end{array}
\right.
\end{eqnarray}
where $V(x)$ is in \eqref{eq:Poisson equ-discounted}.
\end{theorem}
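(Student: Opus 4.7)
The plan is to apply the It\^o formula (Lemma~\ref{lem:Ito-discounted}) to $V$ and use the ODE~\eqref{eq:Poisson equ-discounted} together with the boundary conditions~\eqref{eq:V(d)-V(D)-discounted}--\eqref{eq:V(u)-V(U)-discounted} to match the resulting identity, in expectation, with the definition of $\DC(x,\varphi)$. I first dispose of the outer cases $x\le d$ and $x\ge u$ by a one-step reduction: under the prescribed control band policy, if $x\le d$ then at time $0$ an adjustment $\xi_0=D-x$ is made at cost $K+k(D-x)$, after which $Z(0)=D\in(d,u)$ and the subsequent dynamics coincide with those of the process started from $D$, so $\DC(x,\varphi)=K+k(D-x)+\DC(D,\varphi)$; the case $x\ge u$ is symmetric. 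It therefore suffices to prove $\DC(x,\varphi)=V(x)$ for $x\in(d,u)$, since plugging $x=D$ and $x=U$ then recovers the outer branches of~\eqref{eq:barV-discounted}.

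For $x\in(d,u)$ the hypotheses of Lemma~\ref{lem:Ito-discounted} hold because $V\in C^2([d,u])$ and $Z(s)\in[d,u]$ for all $s\ge 0$ under the control band policy. Substituting $\Gamma V-\beta V=-h$ from~\eqref{eq:Poisson equ-discounted} and taking $\E_x$, the stochastic integral $\sigma\int_0^t e^{-\beta s}V'(Z(s))\,dW(s)$ has zero mean because $V'$ is bounded on the compact interval $[d,u]$, yielding
\begin{equation*}
\E_x\bigl[e^{-\beta t}V(Z(t))\bigr] = V(x) - \E_x\!\int_0^t e^{-\beta s}h(Z(s))\,ds + \E_x\!\!\sum_{0<s\le t} e^{-\beta s}\Delta V(Z(s)).
\end{equation*}
The key observation is that at each adjustment time $T_n$, either $Z(T_n-)=d$ and $\xi_n=D-d>0$, in which case~\eqref{eq:V(d)-V(D)-discounted} gives $\Delta V(Z(T_n))=V(D)-V(d)=-(K+k(D-d))=-\phi(\xi_n)$, or $Z(T_n-)=u$ and $\xi_n=U-u<0$, in which case~\eqref{eq:V(u)-V(U)-discounted} gives $\Delta V(Z(T_n))=-(L+\ell(u-U))=-\phi(\xi_n)$. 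Rearranging produces
\begin{equation*}
V(x) = \E_x\!\int_0^t e^{-\beta s}h(Z(s))\,ds + \E_x\!\!\sum_{T_n\le t} e^{-\beta T_n}\phi(\xi_n) + \E_x\bigl[e^{-\beta t}V(Z(t))\bigr].
\end{equation*}

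Letting $t\to\infty$, the last term vanishes since $V$ is bounded on $[d,u]$, while the first two terms increase monotonically (as $h\ge 0$ and $\phi\ge 0$) to the two pieces of $\DC(x,\varphi)$, so monotone convergence yields $V(x)=\DC(x,\varphi)$; together with the reduction of the first paragraph this establishes $\bar V(x)=\DC(x,\varphi)$ for every $x\in\R$. The only nontrivial piece of bookkeeping is verifying that the control band policy is feasible in the sense of~\eqref{eq:regularPolicy-discounted} so that $\DC(x,\varphi)$ and each intermediate expectation are finite; this is a standard Brownian hitting-time estimate showing that $\E_x[e^{-\beta T_n}]$ decays geometrically in $n$, which ultimately relies on the positivity of the band widths $D-d$ and $u-U$. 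Apart from this, the argument is a clean consequence of It\^o combined with the ODE and the smooth-pasting-style boundary conditions, and I expect no further obstacle.
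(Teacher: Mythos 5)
Your proposal is correct and follows essentially the same route as the paper: apply the It\^o formula to $V$, substitute the ODE, use the boundary conditions to turn each jump of $V$ into $-\phi(\xi_n)$, take expectations, and let $t\to\infty$ using the boundedness of $V$ on $[d,u]$. The only cosmetic difference is that you first reduce the outer cases $x\le d$ and $x\ge u$ to the interior case by a one-step argument, whereas the paper handles all starting points at once by keeping the term $\mathbb{E}_x[\phi(\xi_0)]$ and landing on the identity $\DC(x,\varphi)=\mathbb{E}_x[V(Z(0))]+\mathbb{E}_x[\phi(\xi_0)]$.
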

\begin{remark}
\eqref{eq:V(d)-V(D)-discounted} and \eqref{eq:V(u)-V(U)-discounted} imply that
$\bar{V}$ is continuous at $d$ and $u$.
\end{remark}
\begin{proof}
Consider the control band policy $\varphi=\{d, D, U, u\}$. Let $V$
be a twice continuously differentiable function on $[d, u]$ that
satisfies (\ref{eq:Poisson equ-discounted})-(\ref{eq:V(u)-V(U)-discounted}).
Because $d\le Z(t)\le u$, by Lemma~\ref{lem:Ito-discounted},  we have
\begin{displaymath}
\mathbb{E}_x[e^{-\beta t}V(Z(t))]=\mathbb{E}_x[V(Z(0))]+\mathbb{E}_x\Big[\int_0^t e^{-\beta s}\bigl(\Gamma
V(Z(s))-\beta V(Z(s))\bigr)ds\Big] +\mathbb{E}_x\Big[\sum_{n=1}^{N(t)}e^{-\beta T_n}\theta_n \Big],
\end{displaymath}
where $\theta_n=V(Z(T_n))-V(Z(T_n-))$. Boundary conditions
(\ref{eq:V(d)-V(D)-discounted}) and
(\ref{eq:V(u)-V(U)-discounted}) imply that
$\theta_n=V(Z(T_n))-V(Z(T_n-))=-\phi(\xi_n)$ for $n\geq 1$. Therefore,
\begin{eqnarray*}
&&\mathbb{E}_x[e^{-\beta t}V(Z(t))]-\mathbb{E}_x[V(Z(0))]\nonumber\\
&&\quad\quad=\mathbb{E}_x\Big[\int_0^t e^{-\beta s}\bigl(\Gamma
V(Z(s))-\beta V(Z(s))\bigr)ds\Big] +\mathbb{E}_x\Big[\sum_{n=1}^{N(t)}e^{-\beta T_n}\theta_n
\Big]\nonumber\\
&&\quad\quad=-\mathbb{E}_x\Big[\int_0^t e^{-\beta s} h(Z(s))ds\Big]
-\mathbb{E}_x\Big[\sum_{n=1}^{N(t)}e^{-\beta T_n}\phi(\xi_n) \Big] \nonumber\\
&&\quad\quad=-\mathbb{E}_x\Big[\int_0^t e^{-\beta s} h(Z(s))ds\Big]
-\mathbb{E}_x\Big[\sum_{n=0}^{N(t)}e^{-\beta T_n}\phi(\xi_n) \Big]+\mathbb{E}_x[\phi(\xi_0)].
\end{eqnarray*}
Letting $t\rightarrow\infty$, we have
\begin{eqnarray}
\label{eq:DC=V-discounted}
\DC(x,\varphi)=\mathbb{E}_x[V(Z(0))]+\mathbb{E}_x[\phi(\xi_0)]
\end{eqnarray}
because
\begin{displaymath}
\lim_{t\rightarrow\infty}\mathbb{E}_x[e^{-\beta t}V(Z(t))]=0.
\end{displaymath}
If $Z(0-)=x\in(d,u)$, we have $Z(0)=Z(0-)=x$ and $\xi_0=0$, then
\begin{eqnarray*}
\DC(x,\varphi)=V(x).
\end{eqnarray*}
If $x\leq d$, under control band policy $\varphi=\{d, D, U, u\}$, $Z$ immediately jumps up to $D$.
Therefore, $Z(0)=D$ and $\xi_0=D-x$, then
\begin{eqnarray*}
\mathbb{E}_x[V(Z(0))]=V(D),\quad \mathbb{E}_x[\phi(\xi_0)]=\phi(D-x)=K+k(D-x),
\end{eqnarray*}
which, together with \eqref{eq:DC=V-discounted}, implies that
\begin{eqnarray*}
\DC(x,\varphi)=V(D)+K+k(D-x).
\end{eqnarray*}
The analysis for the case $x\geq u$ is analogous and is omitted.
\end{proof}

We end this section by explicitly finding a solution $V$ to \eqref{eq:Poisson equ-discounted}-\eqref{eq:V(u)-V(U)-discounted}.
\begin{proposition}
\label{prop:V-discounted}
Let $\varphi=\{d,D,U,u\}$ be a control band policy with
\begin{displaymath}
d<D<U<u.
\end{displaymath}
Define
\begin{eqnarray*}
V(x)=  A_1 e^{\lambda_1 x } + B_1 e^{-\lambda_2 x} + V_0(x),
\end{eqnarray*}
where
\begin{eqnarray}
&&V_0(x) = \frac{2}{\sigma^2}
\frac{1}{\lambda_1+\lambda_2}\biggl[
\int_a^x e^{-\lambda_2(x-y)}h(y)dy -
\int_a^x e^{\lambda_1(x-y)}h(y)dy  \biggr],\label{eq:V0-discounted}\\
&&A_1=\frac{b_2\bigl(V_0(D)-V_0(d)+K+k(D-d)\bigr)-b_1\bigl(V_0(U)-V_0(u)+L+\ell(u-U)\bigr)}{a_1b_2-a_2b_1},\mbox{\ \qquad
}\label{eq:A1-discounted}\\
&&B_1=\frac{a_2\bigl(V_0(D)-V_0(d)+K+k(D-d)\bigr)-a_1\bigl(V_0(U)-V_0(u)+L+\ell(u-U)\bigr)}{a_2b_1-a_1b_2}.\text{\
}\label{eq:B1-discounted}
\end{eqnarray}
Then $V$ is a solution to \eqref{eq:Poisson equ-discounted}-\eqref{eq:V(u)-V(U)-discounted}.
In \eqref{eq:A1-discounted} and \eqref{eq:B1-discounted}, we set
\begin{eqnarray}
&&a_1=e^{\lambda_1 d}-e^{\lambda_1 D}, \quad a_2=e^{\lambda_1 u}-e^{\lambda_1 U},\label{eq:cof1-discounted}\\
&&b_1=e^{-\lambda_2 d}-e^{-\lambda_2 D}, \quad b_2=e^{-\lambda_2 u}-e^{-\lambda_2 U}.\label{eq:cof2-discounted}
\end{eqnarray}
\end{proposition}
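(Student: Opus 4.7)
The ODE \eqref{eq:Poisson equ-discounted} reads $\tfrac12\sigma^2 V'' + \mu V' - \beta V = -h$, a linear second-order nonhomogeneous ODE with constant coefficients. My plan is to construct the general solution in the standard way: write down two linearly independent homogeneous solutions, add a particular solution, and then determine the two free constants from the two boundary conditions \eqref{eq:V(d)-V(D)-discounted}--\eqref{eq:V(u)-V(U)-discounted}. The characteristic equation $\tfrac12\sigma^2 r^2+\mu r-\beta=0$ has exactly the two roots $r=\lambda_1$ and $r=-\lambda_2$ defined in Assumption~\ref{assumption:h-discounted}, so the homogeneous solutions are $e^{\lambda_1 x}$ and $e^{-\lambda_2 x}$.

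To produce $V_0$ I would apply variation of parameters. The Wronskian of $e^{\lambda_1 x}$ and $e^{-\lambda_2 x}$ is $-(\lambda_1+\lambda_2)e^{(\lambda_1-\lambda_2)x}$, and after dividing the ODE by $\tfrac12\sigma^2$ the forcing term becomes $-2h/\sigma^2$. The variation-of-parameters formula, with base point chosen as $a$ (the point where $h(a)=0$, which makes later regularity arguments cleaner but is otherwise arbitrary), yields exactly the expression \eqref{eq:V0-discounted}. A short direct computation differentiating $V_0$ under the integral sign verifies $\Gamma V_0-\beta V_0+h=0$ on $[d,u]$; note that $h$ is continuous so the integrands are well-behaved and $V_0\in C^2[d,u]$.

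With the general solution $V(x)=A_1 e^{\lambda_1 x}+B_1 e^{-\lambda_2 x}+V_0(x)$, the two boundary conditions become the linear system
\begin{align*}
a_1 A_1+b_1 B_1 &= V_0(D)-V_0(d)+K+k(D-d),\\
a_2 A_1+b_2 B_1 &= V_0(U)-V_0(u)+L+\ell(u-U),
\end{align*}
with $a_i,b_i$ as in \eqref{eq:cof1-discounted}--\eqref{eq:cof2-discounted}. Cramer's rule then delivers \eqref{eq:A1-discounted}--\eqref{eq:B1-discounted}, provided the determinant $a_1b_2-a_2b_1$ is nonzero.

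The only step requiring a genuine argument is showing $a_1b_2-a_2b_1\neq 0$. I would argue by contradiction: if the determinant vanished, the homogeneous system would admit a nontrivial solution $(A,B)\neq(0,0)$, giving a function $g(x)=Ae^{\lambda_1 x}+Be^{-\lambda_2 x}$ with $g(d)=g(D)$ and $g(u)=g(U)$. By Rolle's theorem, $g'$ would then have at least one zero in $(d,D)$ and one in $(U,u)$, so at least two distinct zeros. But $g'(x)=0$ reduces to $e^{(\lambda_1+\lambda_2)x}=B\lambda_2/(A\lambda_1)$, which has at most one real solution since $\lambda_1+\lambda_2>0$. This contradiction forces $(A,B)=(0,0)$, so the determinant is nonzero and the formulas \eqref{eq:A1-discounted}--\eqref{eq:B1-discounted} are well defined. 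This Rolle-type non-degeneracy check is the only nontrivial point in the proof; everything else is bookkeeping.
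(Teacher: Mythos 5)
Your proof follows the same route as the paper's: identify the characteristic roots $\lambda_1$ and $-\lambda_2$, take the homogeneous solutions $e^{\lambda_1 x}$ and $e^{-\lambda_2 x}$, build the particular solution $V_0$ by variation of parameters with base point $a$, and then reduce the two boundary conditions \eqref{eq:V(d)-V(D)-discounted}--\eqref{eq:V(u)-V(U)-discounted} to a $2\times 2$ linear system in $(A_1,B_1)$ with coefficients \eqref{eq:cof1-discounted}--\eqref{eq:cof2-discounted}. The one substantive thing you add that the paper silently omits is the verification that the determinant $a_1b_2-a_2b_1$ is nonzero, and your argument for it is correct: a nontrivial homogeneous solution $(A,B)$ would give $g(x)=Ae^{\lambda_1 x}+Be^{-\lambda_2 x}$ equal at $d$ and $D$ and equal at $U$ and $u$, hence by Rolle's theorem $g'$ would vanish at two points lying in the disjoint intervals $(d,D)$ and $(U,u)$; but $g'(x)=A\lambda_1 e^{\lambda_1 x}-B\lambda_2 e^{-\lambda_2 x}$ vanishes for at most one $x$ (it is either never zero, when $A=0$ or $B=0$, or has the single zero determined by $e^{(\lambda_1+\lambda_2)x}=B\lambda_2/(A\lambda_1)$). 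That non-degeneracy step is a genuine improvement over the paper's write-up, which simply asserts that the linear system has a unique solution.
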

\begin{proof}
Let
\begin{eqnarray*}
  && \lambda_1 = \Bigl[ (\mu^2+ 2\beta\sigma^2)^{1/2}
  -\mu\Bigr]/\sigma^2>0, \\
  && \lambda_2 = \Bigl[ (\mu^2+ 2\beta\sigma^2)^{1/2}
  +\mu\Bigr]/\sigma^2>0,
\end{eqnarray*}
so that $z=\lambda_1$ and $z=-\lambda_2$ are two solutions of the
quadratic equation
\begin{displaymath}
  \frac{1}{2}\sigma^2 z^2 + \mu z - \beta=0.
\end{displaymath}
The homogenous ordinary differential equation (ODE)
\begin{displaymath}
  \Gamma g - \beta g=0
\end{displaymath}
has two independent solutions $g_1(x)$ and $g_2(x)$, where
\begin{displaymath}
 g_1(x)= e^{\lambda_1 x} \quad \text{and} \quad g_2(x) =e^{-\lambda_2
   x}.
\end{displaymath}
Let
\begin{displaymath}
  w(x) = \det
  \begin{pmatrix}
    g_1(x) & g_2(x) \\
    g_1'(x) & g_2'(x)
  \end{pmatrix}
=-(\lambda_1+\lambda_2) e^{(\lambda_1-\lambda_2)x}\neq 0
\end{displaymath}
and
\begin{eqnarray*}
&&  a_1(x)= \int_a^x \frac{1}{w(y)} g_2(y)
\frac{2}{\sigma^2}h(y)dy=-\frac{1}{\lambda_1+\lambda_2}\frac{2}{\sigma^2}\int_a^x
e^{-\lambda_1y}h(y)dy, \\
&&  a_2(x)= - \int_a^x \frac{1}{w(y)} g_1(y)
\frac{2}{\sigma^2} h(y)dy
=\frac{1}{\lambda_1+\lambda_2}\frac{2}{\sigma^2}\int_a^x e^{\lambda_2y}h(y)dy,
\end{eqnarray*}
where $a$ is the minimum point of the convex inventory cost function $h$.
Then the non-homogenous ODE (\ref{eq:Poisson equ-discounted})  has a
particular solution $V_0(x)$
 given by
\begin{eqnarray*}
V_0(x) = \bigl[a_1(x) g_1(x)+ a_2(x) g_2(x)\bigr] =
\frac{2}{\sigma^2}
\frac{1}{\lambda_1+\lambda_2}\biggl[
\int_a^x e^{-\lambda_2(x-y)}h(y)dy -
\int_a^x e^{\lambda_1(x-y)}h(y)dy  \biggr].
\end{eqnarray*}
A general solution $V(x)$ to (\ref{eq:Poisson equ-discounted}) is given
by
\begin{equation*}
V(x)=  A_1 e^{\lambda_1 x } + B_1 e^{-\lambda_2 x} + V_0(x).
\end{equation*}
Boundary conditions
(\ref{eq:V(d)-V(D)-discounted}) and (\ref{eq:V(u)-V(U)-discounted}) become
\begin{eqnarray}
\bigl(A_1e^{\lambda_1 d}+B_1e^{-\lambda_2 d}+V_0(d)\bigr)-\bigl(A_1e^{\lambda_1 D}+B_1e^{-\lambda_2 D}+V_0(D)\bigr)=K+k(D-d),\label{eq:V(d)-V(D)2-discounted}\\
\bigl(A_1e^{\lambda_1 u}+B_1e^{-\lambda_2 u}+V_0(u)\bigr)-\bigl(A_1e^{\lambda_1 U}+B_1e^{-\lambda_2 U}+V_0(U)\bigr)=L+\ell(u-U).\label{eq:V(u)-V(U)2-discounted}\
\end{eqnarray}
Using the coefficients defined in \eqref{eq:cof1-discounted}-\eqref{eq:cof2-discounted},
we see the boundary conditions \eqref{eq:V(d)-V(D)2-discounted} and \eqref{eq:V(u)-V(U)2-discounted}
become
\begin{eqnarray*}
&&A_1a_1+B_1b_1+V_0(d)-V_0(D)=K+k(D-d),\\
&&A_1a_2+B_1b_2+V_0(u)-V_0(U)=L+\ell(u-U),
\end{eqnarray*}
from which we have unique solution for $A_1$ and $B_1$ given in \eqref{eq:A1-discounted} and \eqref{eq:B1-discounted}.
\end{proof}

\section{Optimal Policy and Optimal Parameters}
\label{sec:optimal-discounted-inpulse}
Theorem \ref{thm:lowerbound-discounted} suggests the following strategy to obtain an
optimal policy. We hope that a control band policy is optimal. Therefore, the first
task is to find an optimal policy among all control band policies. We denote this optimal
control band policy by $\varphi^*=\{d^*,D^*,U^*,u^*\}$ with the expected total discounted
cost
\begin{eqnarray}
\label{eq:barV2-discounted}
\bar{V}(x)=
\left\{
\begin{array}{ll}
V(D^*)+K+k(D^*-x) &\text{for $x\in(-\infty,d^*]$},\\
V(x) & \text{for $x\in(d^*,u^*)$},\\
V(U^*)+L-\ell(U^*-x) & \text{for $x\in[u^*,\infty)$},
\end{array}
\right.
\end{eqnarray}
for any starting point $x\in \R$.
We hope that $\bar{V}$ can be used as the function
$f$ in Theorem \ref{thm:lowerbound-discounted}.
To find the corresponding $f$ that satisfies all the conditions of Theorem \ref{thm:lowerbound-discounted},
we provide the conditions that should be imposed on the optimal parameters
\begin{eqnarray}
\label{eq:optimalconditions1-discounted}
&& V'(D^*)=-k,\quad
V'(U^*)=\ell, \\
\label{eq:optimalconditions2-discounted}
&& V'(d^*)=-k,\quad
V'(u^*)=\ell.
\end{eqnarray}
See Section~5.2 of \cite{DaiYao11a} for an intuitive derivation of
these conditions.  Under condition
(\ref{eq:optimalconditions2-discounted}), $\bar V$ is  a $C^1$ function
on $\R$. Therefore, (\ref{eq:optimalconditions2-discounted}) is also
known as the ``smooth-pasting'' condition.

In this section, we will first prove in Theorem \ref{thm:optimalParameters-discounted}
the existence of parameters $d^*$, $D^*$, $U^*$ and $u^*$ such that the
value function $V$, defined on $[d^*,u^*]$, corresponding the control band policy
$\varphi^*=\{d^*,D^*,U^*,u^*\}$ satisfies \eqref{eq:Poisson equ-discounted}-\eqref{eq:V(u)-V(U)-discounted}
and \eqref{eq:optimalconditions1-discounted}-\eqref{eq:optimalconditions2-discounted}.
As part of the solution, we are to find the boundary points $d^*$, $D^*$, $U^*$ and $u^*$ from
equations \eqref{eq:Poisson equ-discounted}-\eqref{eq:V(u)-V(U)-discounted}
and \eqref{eq:optimalconditions1-discounted}-\eqref{eq:optimalconditions2-discounted}.
These equations define a {\em free boundary problem}.
We then prove in Theorem \ref{thm:optimal-discounted} that the function $\bar{V}$ in \eqref{eq:barV2-discounted}
with parameters $d^*$, $D^*$, $U^*$ and $u^*$
satisfies all the conditions in Theorem \ref{thm:lowerbound-discounted};
therefore, the control band policy $\varphi^*$ is optimal among all feasible policies.

To facilitate the presentation of Theorem \ref{thm:optimalParameters-discounted}, we first find
a general solution without worrying about boundary conditions \eqref{eq:V(d)-V(D)-discounted} and \eqref{eq:V(u)-V(U)-discounted}.
Proposition \ref{prop:V-discounted} shows that $V$ is given in the form
\begin{eqnarray}
\label{eq:V-discounted}
V(x)=  A_1 e^{\lambda_1 x } + B_1 e^{-\lambda_2 x} + V_0(x)
\quad \text{for $x\in\R$,}
\end{eqnarray}
where $V_0$ is given in \eqref{eq:V0-discounted}.
Since $A_1$ and $B_1$ are yet to be determined, which need $d^*$, $D^*$, $U^*$ and $u^*$,
$V$ is also yet to be determined.
Differentiating both sides of \eqref{eq:Poisson equ-discounted} with respect to $x$,
we have that
\begin{eqnarray}
g(x)&=& V'(x)\label{eq:g1-discounted}\\
 &=& \lambda_1 A_1 e^{\lambda_1 x } -\lambda_2 B_1
e^{-\lambda_2 x}  \nonumber\\
&&-\frac{2}{\sigma^2}\frac{1}{\lambda_1+\lambda_2}\biggl[
\lambda_2 \int_a^x e^{-\lambda_2(x-y)}h(y)dy +
\lambda_1 \int_a^x e^{\lambda_1(x-y)}h(y)dy\biggr]\nonumber
\end{eqnarray}
is a solution to
\begin{eqnarray}
\label{eq:gPoisson-discounted}
\Gamma g(x)-\beta g(x)+h'(x)=0 \quad \text{for all $x\in \R\setminus \{a\}$}.
\end{eqnarray}
$g(x)$ in \eqref{eq:g1-discounted} can be rewritten as
\begin{eqnarray}
g(x)&=&  V'(x)\nonumber\\
 &=& \lambda_1 A_1 e^{\lambda_1 x } -\lambda_2 B_1
e^{-\lambda_2 x}\nonumber\\
&&-\frac{2}{\sigma^2}\frac{1}{\lambda_1+\lambda_2}\biggl[
\lambda_2 \int_a^x e^{-\lambda_2(x-y)}h(y)dy +
\lambda_1 \int_a^x e^{\lambda_1(x-y)}h(y)dy\biggr]\nonumber \\
 &=& \lambda_1 A_1 e^{\lambda_1 x } -\lambda_2 B_1
e^{-\lambda_2 x}\nonumber\\
&&-\frac{2}{\sigma^2}\frac{1}{\lambda_1+\lambda_2}\biggl[
 - \int_a^x e^{-\lambda_2(x-y)}h'(y)dy +
  \int_a^x e^{\lambda_1(x-y)}h'(y)dy\biggr] \nonumber\\
&=& \frac{2}{\sigma^2}\frac{1}{\lambda_1+\lambda_2}\biggl[
 \frac{1}{\lambda_1}\bigl(A-\lambda_1\int_a^x e^{-\lambda_1 (y-a)}h'(y)dy\bigr)e^{\lambda_1 (x-a)}
\nonumber \\
&& {} +\frac{1}{\lambda_2}\bigl(B+\lambda_2\int_a^x e^{\lambda_2 (y-a)}h'(y)dy\bigr) e^{-\lambda_2 (x-a)}
  \biggr], \label{eq:g2-discounted}
\end{eqnarray}
where the third equality uses the assumption that $h(a)=0$ and in the last equality
$A$ and $B$ satisfy
\begin{eqnarray}
\frac{2}{\sigma^2}\frac{1}{\lambda_1+\lambda_2}\frac{1}{\lambda_1}e^{-\lambda_1 a} A = \lambda_1 A_1,
\quad
\frac{2}{\sigma^2}\frac{1}{\lambda_1+\lambda_2} \frac{1}{\lambda_2}e^{\lambda_2 a} B = -\lambda_2B_1.
\end{eqnarray}
The following theorem characterizes optimal parameters $(d^*,D^*,U^*,u^*)$ and parameters
$A^*$ and $B^*$ in \eqref{eq:g2-discounted} via solution $g=g_{A,B}$.
\begin{figure}[t]
  \centering
  \includegraphics[width=10cm]{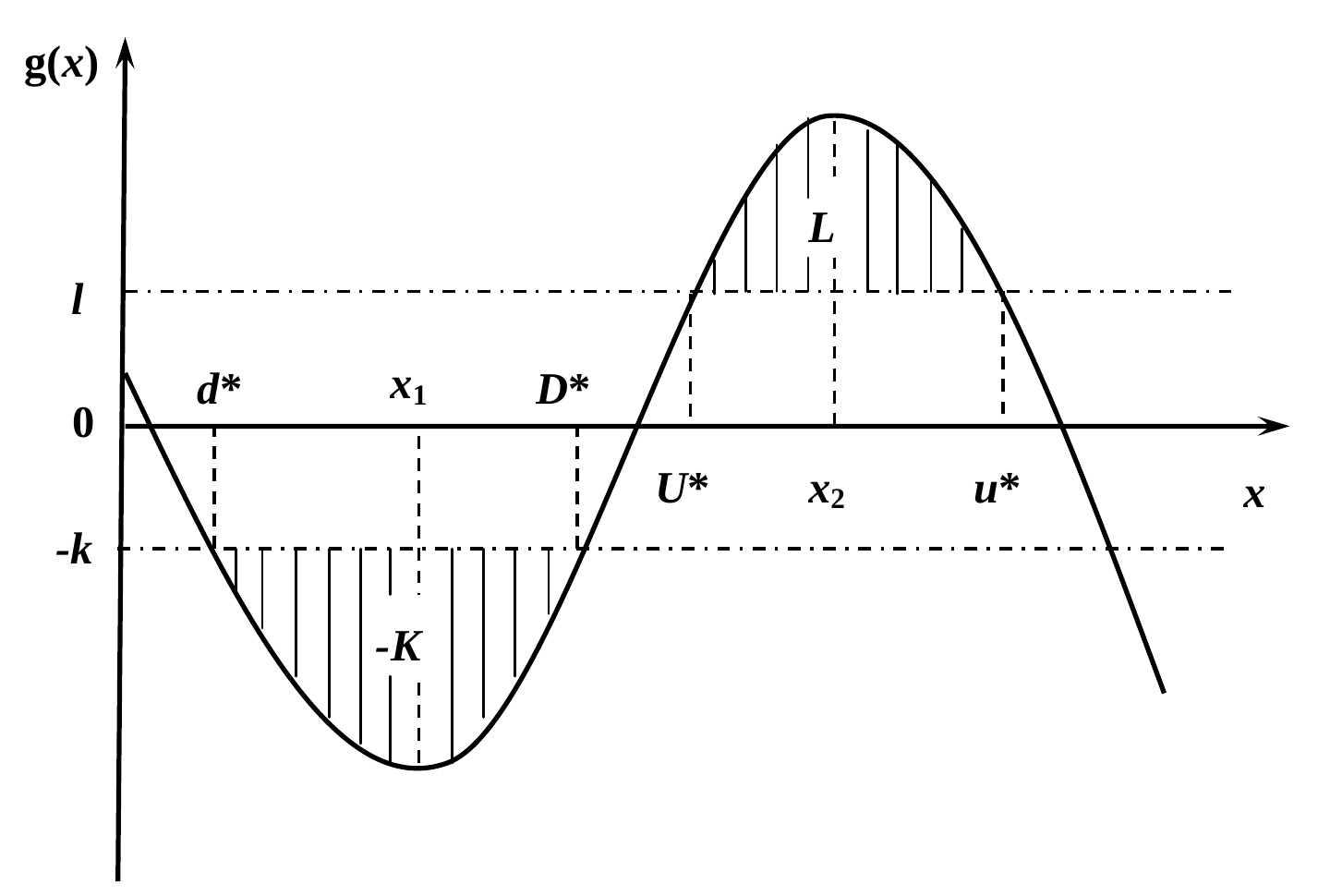}
\caption{ There exist $x_1<x_2$ such that  the function $g$  decreases in
  $(-\infty, x_1)$, increases in $(x_1, x_2)$, and deceases again in $(x_2,
  \infty)$. Parameters $d^*$, $D^*$, $U^*$ and $u^*$ are determined by
$g(d^*)=g(D^*)=-k$, $g(U^*)=g(u^*)=\ell$, the shaded area between
$U^*$ and $u^*$ is $L$, and the shaded area between $d^*$ and $D^*$ is
$K$. In the interval $[d^*, u^*]$, $g$ is the derivative of the
relative value function associated with the control band policy
$\{d^*, D^*, U^*, u^*\}$. }
\label{fig:g1}
\end{figure}
\begin{theorem}
\label{thm:optimalParameters-discounted}
 Assume that the holding cost function $h$ satisfies Assumption \ref{assumption:h-discounted}.
There exist unique $A^*$, $B^*$, $d^*$, $D^*$, $U^*$, $u^*$ with
\begin{equation}
  \label{eq:dlessDlessUlessuDiscount}
    d^*<x_1<D^* < U^* < x_2 <u^*.
\end{equation}
such that $g(x)=g_{A, B}(x)$ in~(\ref{eq:g2-discounted}) satisfies
\begin{eqnarray}
&& \int_d^D \Bigl[ g(x) + k \Bigr] dx=-K,  \label{eq:discountdD}\\
&& \int_U^u \Bigl[g(x) -\ell \Bigr]dx =L, \label{eq:discountUu}\\
&& g(d)=-k,   \label{eq:discountd} \\
&& g(D)=-k, \label{eq:discountD}\\
&& g(U)=\ell,  \label{eq:discountU} \\
&& g(u)=\ell.  \label{eq:discountu}
\end{eqnarray}
Furthermore, $g$ has a local minimum at $x_1<a$ and a local maximum at
$x_2>a$. The function $g$ is strictly decreasing on $(-\infty, x_1)$, strictly increasing
on $(x_1, x_2)$ and strictly decreasing again on $(x_2, \infty)$.
\end{theorem}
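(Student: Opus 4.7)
The plan is to construct the six unknowns $(A^*, B^*, d^*, D^*, U^*, u^*)$ in three stages: first analyze the qualitative shape of $g_{A,B}$, then use the level sets $\{g = -k\}$ and $\{g = \ell\}$ to define $d, D, U, u$ as smooth functions of $(A, B)$, and finally reduce to two integral equations which pin down $(A^*, B^*)$.

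\emph{Stage 1 (Shape of $g_{A,B}$).} I would work from the ODE \eqref{eq:gPoisson-discounted}, namely $\frac{\sigma^2}{2}g'' + \mu g' - \beta g + h' = 0$. At any critical point $x_0$ of $g$, this gives $g''(x_0) = (2/\sigma^2)(\beta g(x_0) - h'(x_0))$, so $x_0$ is a local minimum iff $g(x_0) > h'(x_0)/\beta$ and a local maximum iff $g(x_0) < h'(x_0)/\beta$. The comparison function $h'(x)/\beta$ is non-decreasing (by convexity of $h$), vanishes at $a$, exceeds $\ell$ for all sufficiently large $x$, and falls below $-k$ for all sufficiently negative $x$ by \eqref{eq:hlimit}. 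Combining these sign considerations with the asymptotic analysis of \eqref{eq:g2-discounted} aided by Lemma~\ref{lem:hproperty}, I would show that for $(A,B)$ in an open two-parameter set the function $g_{A,B}$ has exactly one local minimum $x_1 < a$ and exactly one local maximum $x_2 > a$, is strictly monotone on each of $(-\infty, x_1)$, $(x_1, x_2)$, $(x_2, \infty)$, and satisfies $g_{A,B}(x) \to +\infty$ as $x \to -\infty$ and $g_{A,B}(x) \to -\infty$ as $x \to +\infty$.

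\emph{Stages 2 and 3.} Within the subset of $(A, B)$ for which also $g(x_1) < -k$ and $g(x_2) > \ell$, strict monotonicity on each branch produces unique roots $d(A,B) < x_1 < D(A,B)$ of $g = -k$ on the first two branches, and $U(A,B) < x_2 < u(A,B)$ of $g = \ell$ on the last two, with $D < U$ since $-k < \ell$ and $g$ is increasing on $(x_1, x_2)$. All four maps are smooth in $(A,B)$ by the implicit function theorem. Because \eqref{eq:discountdD}-\eqref{eq:discountUu} are precisely the boundary conditions \eqref{eq:V(d)-V(D)-discounted}-\eqref{eq:V(u)-V(U)-discounted} after integrating $g = V'$, the whole system collapses to
\[
F_1(A,B) := K + \int_{d(A,B)}^{D(A,B)}\bigl[g_{A,B}(x) + k\bigr]dx = 0, \qquad F_2(A,B) := L - \int_{U(A,B)}^{u(A,B)}\bigl[g_{A,B}(x) - \ell\bigr]dx = 0.
\]
I would establish existence and uniqueness by a two-step reduction: holding $B$ fixed, use that varying $A$ translates the $e^{\lambda_1(x-a)}$ component of $g$ in a monotone fashion, combined with an intermediate-value argument, to obtain a unique $A = A(B)$ with $F_1 = 0$; substitute back and solve $F_2(A(B), B) = 0$ for $B$ by the analogous monotone perturbation in the $e^{-\lambda_2(x-a)}$ direction.

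\emph{Main obstacle.} The hard part is Stage~1. Because $h$ is assumed only continuous and convex with $h \in C^2(\R\setminus\{a\})$, and $h''$ is allowed to grow at the essentially exponential rates permitted by \eqref{eq:h''Lim1-discounted}-\eqref{eq:h''Int2-discounted}, there is no closed form for $g_{A,B}$, and qualitative control demands careful use of convexity together with the tail conditions \eqref{eq:hlimit}. Ruling out additional critical points and pinning the min strictly below $a$ and the max strictly above $a$, uniformly on a suitable open set of $(A,B)$, is the delicate technical step. A secondary difficulty in Stage~3 is that $F_1, F_2$ are not componentwise monotone jointly in $(A,B)$, so uniqueness must be recovered from the structural monotonicity of the individual partial maps rather than from a direct global monotonicity principle.
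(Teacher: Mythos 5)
Your high-level skeleton --- establish the unimodal-then-monotone shape of $g_{A,B}$, define $d,D,U,u$ implicitly as functions of $(A,B)$, reduce to two scalar integral equations, and solve by a nested one-dimensional argument --- matches the structure of the paper's Section~\ref{sec:optimal-control-band-discounted}, with the two equations processed in the opposite (symmetric) order: the paper solves $\Lambda_2(A,B)=L$ for $A=A^*(B)$ first, you propose solving $\Lambda_1(A,B)=-K$ for $A=A(B)$ first. However, your Stage~1 contains a genuine gap. The ODE sign condition $g''(x_0)=\tfrac{2}{\sigma^2}\bigl(\beta g(x_0)-h'(x_0)\bigr)$ at a critical point $x_0$ classifies each critical point you already have (min vs.\ max), but it places no bound on the \emph{number} of critical points: one can have a min $m$, then a max $M$, then a min $x_1$ on $(-\infty,a)$ with $g(m)>h'(m)/\beta$, $g(M)<h'(M)/\beta$, $g(x_1)>h'(x_1)/\beta$, all consistent with $h'/\beta$ nondecreasing and $g$ increasing on $(m,M)$ and decreasing on $(M,x_1)$. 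The comparison with $h'/\beta$ gives no contradiction. The paper's argument (Lemma~\ref{lem:optimalParameter-discounted}, Case~1) works instead from the \emph{explicit} expression~\eqref{eq:g''-discounted} for $g''$: it isolates a point $x'$ at which the $e^{-\lambda_2(x-a)}$-coefficient $B-h'(a-)+\int_x^a e^{\lambda_2(y-a)}h''(y)\,dy$ changes sign, shows both bracketed coefficients are nonnegative on $(-\infty,x')$ (hence $g''>0$ there, so $g'$ is strictly increasing), and separately shows $g'>0$ on $(x',a)$ directly. That convexity of $g$ on a left half-line is what forces at most one zero of $g'$ there; your comparison-function argument does not deliver this, and I don't see how to recover it without going back to the explicit formulas.

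Stage~3 also has a substantive gap that you flag but do not resolve. Along the curve $A=A(B)$ you obtain from the first equation, proving that the second functional $B\mapsto \Lambda_2(A(B),B)$ is strictly monotone (hence crosses $L$ exactly once) is not an ``analogous monotone perturbation'' argument: both $\Lambda_1$ and $\Lambda_2$ are increasing in $A$ and in $B$ separately (since $\partial g/\partial A>0$ and $\partial g/\partial B>0$ pointwise), so $A(B)$ is \emph{decreasing} in $B$, and the total derivative $\tfrac{d}{dB}\Lambda_2(A(B),B)$ is a difference of two positive contributions. Pinning its sign requires showing the slope $dA/dB$ produced by the first constraint is numerically dominated in a precise sense by the one the second constraint would demand. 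The paper's Lemma~\ref{lem:B^*-discounted} does exactly this: it computes $dA^*/dB$ from \eqref{eq:dA^*(B)/dB-discounted}, applies the Lagrange mean value theorem to both the $U$--$u$ and $d$--$D$ exponential differences, and reduces the required inequality to $e^{-\lambda_2(y_1-z_1)}<e^{\lambda_1(y_2-z_2)}$, which holds precisely because $d<D<U<u$ forces $y_1>z_1$ and $y_2>z_2$. Without some version of this comparison --- which crucially uses the spatial ordering of the four threshold points, not just smoothness and IVT --- the uniqueness claim does not go through.
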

If $g$ satisfies all conditions (\ref{eq:gPoisson-discounted}),
(\ref{eq:discountdD})-(\ref{eq:discountu}) in Theorem~\ref{thm:optimalParameters-discounted},
$V(x)$ in \eqref{eq:V-discounted}  clearly satisfies all conditions
\eqref{eq:Poisson equ-discounted}-\eqref{eq:V(u)-V(U)-discounted} and \eqref{eq:optimalconditions1-discounted}-\eqref{eq:optimalconditions2-discounted}.
The proof of Theorem~\ref{thm:optimalParameters-discounted} is long, and we
defer it to Section \ref{sec:optimal-control-band-discounted}.

\begin{theorem}
\label{thm:optimal-discounted}
  Assume that the holding cost function $h$ satisfies Assumption
  \ref{assumption:h-discounted}. Let $d^*<D^*<U^*<u^*$, along with constants $A^*$ and
  $B^*$, be the unique solution in
  Theorem~\ref{thm:optimalParameters-discounted}. Then the control band policy
  $\varphi^*=\{d^*, D^*, U^*, u^*\}$ is optimal among all
  non-anticipating policies.
\end{theorem}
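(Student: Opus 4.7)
The strategy is to apply the lower-bound Theorem~\ref{thm:lowerbound-discounted} with $f := \bar V$ given by \eqref{eq:barV2-discounted}. Once its hypotheses are verified, Theorem~\ref{thm:lowerbound-discounted} yields $\DC(x,\varphi) \ge \bar V(x)$ for every feasible $\varphi$ and every $x$, while Theorem~\ref{thm:control band-discounted} delivers the matching identity $\DC(x,\varphi^*) = \bar V(x)$; chaining these proves optimality of $\varphi^*$. The regularity hypotheses follow quickly from Theorem~\ref{thm:optimalParameters-discounted}: the smooth-pasting identities $V'(d^*) = -k$ and $V'(u^*) = \ell$ match the slopes of the two linear pieces of $\bar V$ outside $[d^*,u^*]$, so $\bar V \in C^1(\R)$; differentiating \eqref{eq:Poisson equ-discounted} shows $V''$ is continuous on $[d^*,u^*]\setminus\{a\}$, while $\bar V'' = 0$ off $[d^*,u^*]$, so $\bar V'$ is absolutely continuous with $\bar V''$ locally in $L^1(\R)$; and since $g=V'$ is continuous on the compact set $[d^*,u^*]$ and $\bar V'$ is constant elsewhere, $|\bar V'|$ is uniformly bounded.

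The differential inequality \eqref{eq:lbPoission-discounted} holds with equality on $(d^*,u^*)$ by construction. For $x \ge u^*$, where $\bar V(x) = V(u^*) + \ell(x-u^*)$, I would write
\[
\Gamma \bar V(x) - \beta \bar V(x) + h(x) = \mu \ell - \beta V(u^*) - \beta \ell(x - u^*) + h(x).
\]
At $x = u^*$ the ODE together with $V'(u^*) = \ell$ reduces this to $-\tfrac12 \sigma^2 V''(u^*)$, which is nonnegative because $u^* > x_2$ and $g = V'$ is decreasing on $(x_2,\infty)$. The $x$-derivative is $h'(x) - \beta \ell$; using convexity of $h$ together with \eqref{eq:hlimit}, and most cleanly by comparing $\bar V$ with the smooth ODE-continuation $V_{\mathrm{ext}}$ of $V$ past $u^*$ — the difference $\bar V - V_{\mathrm{ext}}$ vanishes with zero slope at $u^*$ and grows thereafter since $V_{\mathrm{ext}}' = g < \ell$ on $(u^*, \infty)$ — one shows the displayed expression remains nonnegative throughout $[u^*,\infty)$. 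The case $x \le d^*$ is symmetric.

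For the adjustment inequalities, set $H(x) := \bar V(x) + kx$ and $J(x) := \bar V(x) - \ell x$, so \eqref{eq:lbK-discounted}--\eqref{eq:lbL-discounted} become $H(y) - H(x) \le K$ for $y < x$ and $J(y) - J(x) \le L$ for $y > x$. The shape of $g$ from Theorem~\ref{thm:optimalParameters-discounted} — strictly decreasing on $(-\infty,x_1)$, increasing on $(x_1,x_2)$, decreasing on $(x_2,\infty)$, with $g = -k$ only at $d^*, D^*$ and $g = \ell$ only at $U^*, u^*$ — forces $H$ to be constant on $(-\infty,d^*]$, strictly decreasing on $[d^*,D^*]$, and strictly increasing on $[D^*,\infty)$. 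Consequently the supremum of $H(y)-H(x)$ over $y<x$ is attained at $(d^*,D^*)$ and equals $-\int_{d^*}^{D^*} [g(s)+k]\,ds = K$ by \eqref{eq:discountdD}; similarly $J$ attains its minimum at $U^*$, and the supremum of $J(y)-J(x)$ over $y>x$ equals $\int_{U^*}^{u^*}[g(s)-\ell]\,ds = L$ by \eqref{eq:discountUu}. All hypotheses of Theorem~\ref{thm:lowerbound-discounted} are thereby met, and the conclusion follows.

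The main obstacle is propagating the differential inequality \eqref{eq:lbPoission-discounted} from $(d^*,u^*)$, where it is an equality, out to all of $\R$: the linear continuation of $\bar V$ beyond the band is not a solution of the underlying ODE, and the residual must be shown globally nonnegative. This is the only step that genuinely couples the growth condition \eqref{eq:hlimit} with the full monotonicity description of $g$ supplied by Theorem~\ref{thm:optimalParameters-discounted}; every other verification reduces to the local structure of $g$ at the four band boundary points and the explicit integral conditions \eqref{eq:discountdD}--\eqref{eq:discountUu}.
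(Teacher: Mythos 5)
Your plan is the paper's plan: feed $\bar V$ into the lower-bound Theorem~\ref{thm:lowerbound-discounted}, match it against the control-band cost via Theorem~\ref{thm:control band-discounted}, and check each hypothesis using the structure of $g$ from Theorem~\ref{thm:optimalParameters-discounted}. The regularity and boundedness checks, and the verification of \eqref{eq:lbK-discounted}--\eqref{eq:lbL-discounted} via the monotonicity of $H=\bar V+k\,\cdot$ and $J=\bar V-\ell\,\cdot$ together with \eqref{eq:discountdD}--\eqref{eq:discountUu}, all match the paper.

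The one place where your sketch does not actually close is exactly the step you flag as ``the main obstacle'': showing $\Gamma\bar V-\beta\bar V+h\ge 0$ on $[u^*,\infty)$ (and symmetrically on $(-\infty,d^*]$). Both routes you offer are insufficient as stated. (i) Writing $\delta=\bar V-V_{\mathrm{ext}}$, you correctly observe $\delta(u^*)=\delta'(u^*)=0$, $\delta'>0$, $\delta''>0$ on $(u^*,\infty)$, but the quantity to control is $\Gamma\delta-\beta\delta$, and the term $-\beta\delta<0$ can compete with $\tfrac12\sigma^2\delta''+\mu\delta'$; positivity and growth of $\delta$ alone do not yield sign information. (ii) Noting $q'(x)=h'(x)-\beta\ell$ and appealing to ``convexity plus \eqref{eq:hlimit}'' also does not finish: \eqref{eq:hlimit} only gives $h'(x)>\beta\ell$ for $x$ large, so a priori $q$ could dip below $q(u^*)\ge 0$ on an intermediate interval before $h'$ crosses $\beta\ell$. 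The missing lemma is the pointwise bound
\[
h'(u^*)\;\ge\;\beta\ell
\qquad\text{and}\qquad
h'(d^*)\;\le\;-\beta k.
\]
This follows not from \eqref{eq:hlimit} but from the ODE \eqref{eq:gPoisson-discounted} evaluated at the interior critical points: at $x_2$ one has $g'(x_2)=0$ and $g''(x_2)<0$, so $h'(x_2)=\beta g(x_2)-\tfrac12\sigma^2 g''(x_2)>\beta g(x_2)>\beta\ell$ by \eqref{eq:g(x2)>l-discounted}, and then convexity of $h$ together with $u^*>x_2$ gives $h'(u^*)\ge h'(x_2)>\beta\ell$; symmetrically at $x_1$. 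With this bound in hand, $q'\ge 0$ on $[u^*,\infty)$ and hence $q\ge q(u^*)\ge 0$ (equivalently, the paper's direct convexity comparison $h(x)-h(u^*)\ge h'(u^*)(x-u^*)\ge\beta\ell(x-u^*)$ works). The paper's published proof writes this inequality in one line without isolating the bound on $h'(u^*)$, $h'(d^*)$ either, so you have identified the right crux; but the two mechanisms you propose to discharge it do not, and the ODE-at-$x_2$ argument is what actually does the job.
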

\begin{proof}
Let
\begin{eqnarray}
\label{eq:barg-discounted}
  \bar{g}(x) =
  \left\{
  \begin{array}{ll}
   -k & \text{for } x\in(-\infty,d^*],\\
   g_{A^*,B^*}(x) &\text{for } x\in (d^*,u^*),\\
   \ell & \text{for } x\in[u^*,\infty),
   \end{array}
  \right.
\end{eqnarray}
and
\begin{eqnarray}
\label{eq:barV*-discounted}
  \bar{V}(x) =
  \left\{
  \begin{array}{ll}
   V(D^*)+K+k(D^*-x) & \text{for } x\in(-\infty,d^*),\\
   V(x) &\text{for } x\in (d^*,u^*),\\
   V(U^*)+L+\ell(x-U^*) & \text{for } x\in(u^*,\infty),
   \end{array}
  \right.
\end{eqnarray}
with
\begin{eqnarray}
\label{eq:V*-discountd}
V(x)=A_1^* e^{\lambda_1 x } + B_1^* e^{-\lambda_2 x} + V_0(x),
\end{eqnarray}
where $\frac{2}{\sigma^2}\frac{1}{\lambda_1+\lambda_2}\frac{1}{\lambda_1}e^{-\lambda_1 a} A^* = \lambda_1 A_1^*$,
$\frac{2}{\sigma^2}\frac{1}{\lambda_1+\lambda_2} \frac{1}{\lambda_2}e^{\lambda_2 a} B^* = -\lambda_2B_1^*$ and
$V_0(x)$ is given by \eqref{eq:V0-discounted}. Therefore,
\begin{eqnarray}
\bar{V}'(x)=\bar{g}_{A^*,B^*}(x) \quad \text{for } x\in\R.
\end{eqnarray}

We now show that $\bar{V}$ satisfies all the
conditions in Theorem \ref{thm:lowerbound-discounted}. Thus, Theorem
\ref{thm:lowerbound-discounted} shows that the expected total discounted cost under any
feasible policy is at least $\bar{V}(x)$.
Since $\bar{V}(x)$ is the expected total discounted cost under the control band policy $\varphi^*$ with starting point $x$,
$\bar{V}(x)$ is the optimal cost and the control band policy $\varphi^*$ is optimal among all feasible policies.

First,  $\bar{V}(x)$ is in $C^2((d^*, u^*))$. Condition \eqref{eq:discountdD}
implies
\begin{equation}
  \label{eq:boundaryStarK}
 V(d^*)-V(D^*)=-\int_{d^*}^{D^*}g_{A^*,B^*}(x)dx= K  + k(D^*-d^*)
\end{equation}
and \eqref{eq:discountUu} implies
\begin{displaymath}
  V(u^*)-V(U^*) =\int_{U^*}^{u^*}g_{A^*,B^*}(x)dx= L  + \ell(u^*-U^*).
\end{displaymath}
\eqref{eq:V*-discountd} implies that $V$ satisfies
\begin{displaymath}
 \Gamma V(x) -\beta V(x)+ h(x) =0, \text{ for }  x\in [d^*, u^*].
\end{displaymath}
By Theorem \ref{thm:control band-discounted}, $\bar{V}$ defined in \eqref{eq:barV*-discounted} must be the discounted
 cost under control band policy $\varphi^*$.

Now, we show that $\bar{V}(x)$ satisfies the rest of conditions in Theorem
\ref{thm:lowerbound-discounted}. Conditions \eqref{eq:discountd} and \eqref{eq:discountu}
imply that truncated function $\bar{V}'(x)$ is continuous in $\R$.
Therefore, $\bar{V}\in \mathbb{C}^1(\R)$. Clearly, $\bar{V}'(x)=-k$ for $x\in
(-\infty,d^*]$ and  $\bar{V}'(x)=\ell$ for $x\in [u^*, \infty)$. Let
\begin{displaymath}
  M = \sup_{x\in [d^*, u^*]}\abs{g_{A^*,B^*}(x)}.
\end{displaymath}
We have $\abs{\bar{V}'(x)}\le M$ for all $x\in \R$.
Because
\begin{displaymath}
 \Gamma \bar{V}-\beta \bar{V}(x)+h(x) =\Gamma V-\beta V(x)+h(x) = 0 \quad \text{ for } x\in [d^*, u^*].
\end{displaymath}
In particular
\begin{displaymath}
\Gamma \bar{V}(d^*)-\beta \bar{V}(d^*)+h(d^*)=0
\end{displaymath}
and
\begin{displaymath}
\Gamma \bar{V}(u^*)-\beta \bar{V}(u^*)+h(u^*)=0.
\end{displaymath}
It follows from  part (a) and part (b) of
Lemma~\ref{lem:optimalParameter-discounted} that $d^*<x_1<a<x_2<u^*$,
$\bar{V}''(d^*)=V''(d^*)=g'(d^*)\le 0$  and $\bar{V}''(u^*)=V''(u^*)=g'(u^*)\le 0$ (see Figure \ref{fig:g1}).
Thus, we have $\mu \bar{V}'(d^*)-\beta \bar{V}(d^*)+h(d^*)\ge 0$
and $\mu \bar{V}'(u^*)-\beta \bar{V}(u^*)+h(u^*)\ge0$. Now,
for $x<d^*$, $\Gamma \bar{V}(x)-\beta \bar{V}(x)+h(x)=\mu (-k)-\beta (\bar{V}(d^*)-k(x-d^*))+h(x)\ge \mu \bar{V}'(d^*)-\beta \bar{V}(d^*)+h(d^*)\ge0$.
Similarly, for $x>u^*$, $\Gamma \bar{V}(x)-\beta \bar{V}(x)+h(x)=\mu(\ell)-\beta(\bar{V}(u^*)+\ell(x-u^*))+h(x)\ge\mu \bar{V}'(u^*)-\beta \bar{V}(u^*)+h(u^*)\ge 0$.

Now we verify that $\bar{V}$ satisfies \eqref{eq:lbK-discounted}.
Let $x, y\in \R$ with $y<x$.
Then,
\begin{eqnarray*}
  \bar{V}(x)-\bar{V}(y) + k(x-y) &= & \int_y^x[\bar g(z)+k]dz \\
    &\ge &  \int_{(y\vee d^*)\wedge D^*}^{(x\wedge D^*)\vee d^*}[\bar g(z)+k]dz \\
    &\ge &  \int_{d^*}^{D^*}[\bar g(z)+k]dz \\
  &=& -K,
\end{eqnarray*}
where the first inequality follows from $\bar g(z)=-k$ for $z\le
d^*$ and  $\bar g(z)=g(z)\ge -k$ for $D^*< z<
u^*$ and $\bar g(z)=\ell \ge -k$ for $z\ge u^*$,
 and the second inequality follows from the fact that $\bar
 g(z)=g(z)\le -k$ for
$z\in [d^*, D^*]$; see, Figure \ref{fig:g1}. Thus \eqref{eq:lbK-discounted} is
proved.

It remains to verify that $\bar{V}$ satisfies \eqref{eq:lbL-discounted}.
For $x, y\in \R$ with $y>x$.
\begin{eqnarray*}
  \bar{V}(y)-\bar{V}(x) -\ell (y-x) &= & \int_x^y[\bar g(z)-\ell]dz \\
    &\le &  \int_{(x\vee U^*)\wedge u^*}^{(y\wedge u^*)\vee U^*}[\bar g(z)-\ell]dz \\
    &\le &  \int_{U^*}^{u^*}[\bar g(z)-\ell]dz \\
  &=& L,
\end{eqnarray*}
proving \eqref{eq:lbL-discounted}.
\end{proof}

\section{Optimal Control Band Parameters}
\label{sec:optimal-control-band-discounted}

This section is devoted to the proof of Theorem \ref{thm:optimalParameters-discounted}.
We separate the proof into a series of lemmas.

Since $h$ is convex, one has $h'(x)\le h'(y)$ whenever
the derivatives at $x< y$ exist. It follows that $\lim_{x\uparrow
  a}h'(x)$ and $\lim_{x\downarrow a}h'(x)$ exist.
Define
\begin{eqnarray*}
h'(a-)=\lim_{x\uparrow a}h'(x) \quad \text{ and } \quad
h'(a+)=\lim_{x\downarrow a}h'(x).
\end{eqnarray*}
We have $h'(a-)\le h'(a+)$. Recall the function $g$ in
(\ref{eq:g2-discounted}). Using the integration by parts, one has
\begin{eqnarray}
g(x) &=&\left\{
\begin{array}{ll}
\frac{2}{\sigma^2}\frac{1}{\lambda_1+\lambda_2}\Big[\frac{1}{\lambda_1}
 \Big(A-h'(a-)+\int_x^a e^{-\lambda_1 (y-a)}h''(y)dy\Big)e^{\lambda_1 (x-a)}
\\
\quad+\frac{1}{\lambda_2}\Big(B-h'(a-)+\int_x^a e^{\lambda_2 (y-a)}h''(y)dy\Big) e^{-\lambda_2 (x-a)}
  \Big]+\frac{1}{\beta}h'(x) & \text{for $x<a$},\\
 \frac{2}{\sigma^2}\frac{1}{\lambda_1+\lambda_2}\biggl[\frac{1}{\lambda_1}
 \Big(A-h'(a+)-\int_a^x e^{-\lambda_1 (y-a)}h''(y)dy\Big)e^{\lambda_1 (x-a)}
\\
\quad+\frac{1}{\lambda_2}\Big(B-h'(a+)-\int_a^x e^{\lambda_2 (y-a)}h''(y)dy\Big) e^{-\lambda_2 (x-a)}
  \Big]+\frac{1}{\beta}h'(x) & \text{for $x>a$}.
\end{array}
\right.
\label{eq:g3-discounted}
\end{eqnarray}
It follows that
\begin{eqnarray}
\label{eq:g'-discounted}
g'(x)
=\left\{
\begin{array}{ll}
\frac{2}{\sigma^2}\frac{1}{\lambda_1+\lambda_2}\Big[
 \Big(A-h'(a-)+\int_x^a e^{-\lambda_1 (y-a)}h''(y)dy\Big)e^{\lambda_1 (x-a)}
\\
\quad -\Big(B-h'(a-)+\int_x^a e^{\lambda_2 (y-a)}h''(y)dy\Big) e^{-\lambda_2 (x-a)}
  \Big] & \text{for $x<a$},\\
 \frac{2}{\sigma^2}\frac{1}{\lambda_1+\lambda_2}\Big[
 \Big(A-h'(a+)-\int_a^x e^{-\lambda_1 (y-a)}h''(y)dy\Big)e^{\lambda_1 (x-a)}
\\
\quad -\Bigl(B-h'(a+)-\int_a^x e^{\lambda_2 (y-a)}h''(y)dy\Big) e^{-\lambda_2 (x-a)}
  \Big] & \text{for $x>a$}
\end{array}
\right.
\end{eqnarray}
and
\begin{eqnarray}
\label{eq:g''-discounted}
g''(x)=\left\{
\begin{array}{ll}
\frac{2}{\sigma^2}\frac{1}{\lambda_1+\lambda_2}\Big[
 \Big(A-h'(a-)+\int_x^a e^{-\lambda_1 (y-a)}h''(y)dy\Big)\lambda_1e^{\lambda_1 (x-a)}
\\
\quad +\Big(B-h'(a-)+\int_x^a e^{\lambda_2 (y-a)}h''(y)dy\Big) \lambda_2e^{-\lambda_2 (x-a)}
  \Big] & \text{for $x<a$},\\
 \frac{2}{\sigma^2}\frac{1}{\lambda_1+\lambda_2}\Big[
 \Big(A-h'(a+)-\int_a^x e^{-\lambda_1 (y-a)}h''(y)dy\Big)\lambda_1e^{\lambda_1 (x-a)}
\\
\quad +\Big(B-h'(a+)-\int_a^x e^{\lambda_2 (y-a)}h''(y)dy\Big) \lambda_2e^{-\lambda_2 (x-a)}
  \Big] & \text{for $x>a$}.
\end{array}
\right.
\end{eqnarray}
Define
\begin{eqnarray*}
\overline{A}=h'(a+)+\int_a^{+\infty} e^{-\lambda_1 (y-a)}h''(y)dy,
\quad
\underline{B}=h'(a-)-\int_{-\infty}^a e^{\lambda_2 (y-a)}h''(y)dy.
\end{eqnarray*}
We have the following lemma.
\begin{lemma}
  \label{lem:nonemptyregion}
Assume that $h$ satisfies Assumption \ref{assumption:h-discounted}, then
\begin{eqnarray}
&&h'(a-) <\overline{A},\label{eq:righSide}\\
&&\underline{B} <h'(a+). \label{eq:leftSide}
\end{eqnarray}
\end{lemma}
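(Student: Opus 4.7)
The plan is to prove both strict inequalities by the same pattern: write each difference as a sum of two manifestly non-negative quantities, and then use the strict sign conditions in (\ref{eq:hlimit}) to rule out the degenerate case in which both quantities vanish. Since $h$ is convex, $h''\ge 0$ almost everywhere and $h'(a-)\le h'(a+)$; combined with part (c) of Assumption~\ref{assumption:h-discounted} this also gives $h'(a-)\le 0\le h'(a+)$.

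For (\ref{eq:righSide}), I would first rewrite
\begin{equation*}
\overline{A} - h'(a-) = \bigl[h'(a+) - h'(a-)\bigr] + \int_a^{+\infty} e^{-\lambda_1(y-a)} h''(y)\,dy.
\end{equation*}
The bracketed term is $\ge 0$ by convexity, and the integral is well-defined and $\ge 0$ by (\ref{eq:h''Int1-discounted}) together with $h''\ge 0$ a.e. Hence $\overline{A}\ge h'(a-)$, and it remains to exclude equality. If equality held then both terms would vanish: the vanishing of the integral forces $h''(y)=0$ for a.e.\ $y>a$, so $h'$ is constant on $(a,\infty)$ and equal to $h'(a+)$; the vanishing of the first term gives $h'(a+)=h'(a-)\le 0$. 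Taken together, $\lim_{x\uparrow\infty}h'(x)=h'(a+)\le 0$, contradicting the first half of (\ref{eq:hlimit}) which requires $\lim_{x\uparrow\infty}h'(x)>\ell\beta>0$. Therefore $\overline{A}>h'(a-)$.

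The proof of (\ref{eq:leftSide}) is completely symmetric. Writing
\begin{equation*}
h'(a+) - \underline{B} = \bigl[h'(a+) - h'(a-)\bigr] + \int_{-\infty}^{a} e^{\lambda_2(y-a)} h''(y)\,dy,
\end{equation*}
both summands are non-negative by the same reasoning (using (\ref{eq:h''Int2-discounted}) for integrability). If both vanished, $h'$ would be constant on $(-\infty,a)$ and equal to $h'(a-)\ge 0$, so that $\lim_{x\downarrow-\infty}h'(x)=h'(a-)\ge 0$, contradicting the second half of (\ref{eq:hlimit}).

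There is no real obstacle in this lemma; the statement is essentially a bookkeeping consequence of convexity combined with the growth condition (\ref{eq:hlimit}). The only point requiring a moment's care is the observation that one must rule out the case in which $h$ is affine on the relevant half-line past $a$, which is precisely where (\ref{eq:hlimit}) is used.
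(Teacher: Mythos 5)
Your proof is correct and follows essentially the same route as the paper's. Both arguments reduce to the observation that if the integral $\int_a^{+\infty} e^{-\lambda_1(y-a)}h''(y)\,dy$ vanishes, then $h'$ is constant on $(a,\infty)$ and hence $h'(a+)=\lim_{x\uparrow\infty}h'(x)>\ell\beta>0$, which gives the strict gap (the paper treats this directly as a case split on whether the integral is positive or zero, while you phrase it as ruling out simultaneous vanishing of both terms in the decomposition $\overline{A}-h'(a-)=[h'(a+)-h'(a-)]+\int_a^{+\infty}e^{-\lambda_1(y-a)}h''(y)\,dy$); the two presentations are logically equivalent and use exactly the same ingredients.
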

\begin{proof}
Assumption \ref{assumption:h-discounted} (c) says that
$h'(x)\leq 0$ for $x<a$ and $h'(x)\geq 0$ for $x>a$,
we have
\begin{eqnarray}
\label{eq:h'(a-)<h'(a+)-discounted}
  h'(a-)=\lim_{x\uparrow a}h'(x)\le 0 \le\lim_{x\downarrow a}h'(x)=h'(a+).
\end{eqnarray}
If $\int_a^{+\infty} e^{-\lambda_1 (y-a)}h''(y)dy>0$, then $~(\ref{eq:righSide})$ clearly holds.
Now assume that
\begin{displaymath}
  \int_a^{+\infty} e^{-\lambda_1 (y-a)}h''(y)dy=0.
\end{displaymath}
Because $h''(x)\ge 0$  and $h''(x)$ is assumed to be continuous on
$(a, \infty)$, we
have $h''(x)=0$ for $x>a$. Therefore, $h$ must be linear in $x>a$.
This fact and \eqref{eq:hlimit} imply that
\begin{displaymath}
h'(a+)=\lim_{x\downarrow a}h'(x)>0,
\end{displaymath}
which, together with \eqref{eq:h'(a-)<h'(a+)-discounted} yields
\eqref{eq:righSide}.

Similarly we can prove \eqref{eq:leftSide}.
\end{proof}

\begin{figure}[t]
  \centering
  \includegraphics[width=12cm]{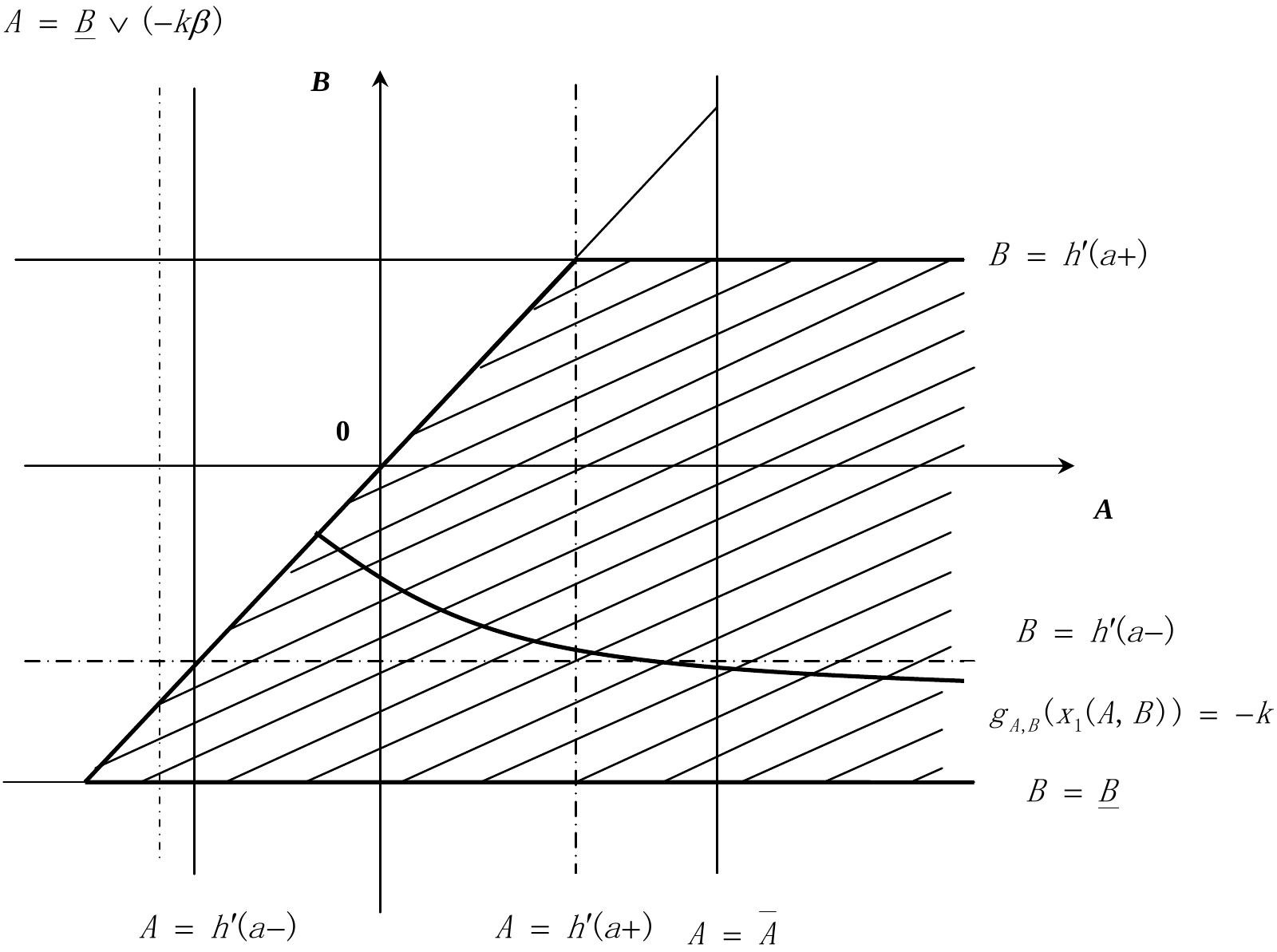}
\caption{(a) The shaded region is the set of $(A,B)$ that satisfies
  (\ref{eq:ABcondition1-discounted}) and
  (\ref{eq:ABcondition2-discounted}). The unique minimum
  $x_1=x_1(A,B)\in (-\infty, a)$ is well defined for all $(A,B)$ in
  this region.
 \newline
(b) For each $A \in \bigl(\underline{B}\vee(-k\beta), +\infty\bigr)$,
there exists a unique $\overline{B}(A) \in \bigl(\underline{B},
A\wedge h'(a+)\bigr)$
such that $g_{A,\overline{B}(A)}(x_1(A,\overline{B}(A)))=-k$. The
curve $B=\overline{B}(A)$ is decreasing.}
\label{fig:dis1a}
\end{figure}

\begin{figure}[t]
  \centering
  \includegraphics[width=12cm]{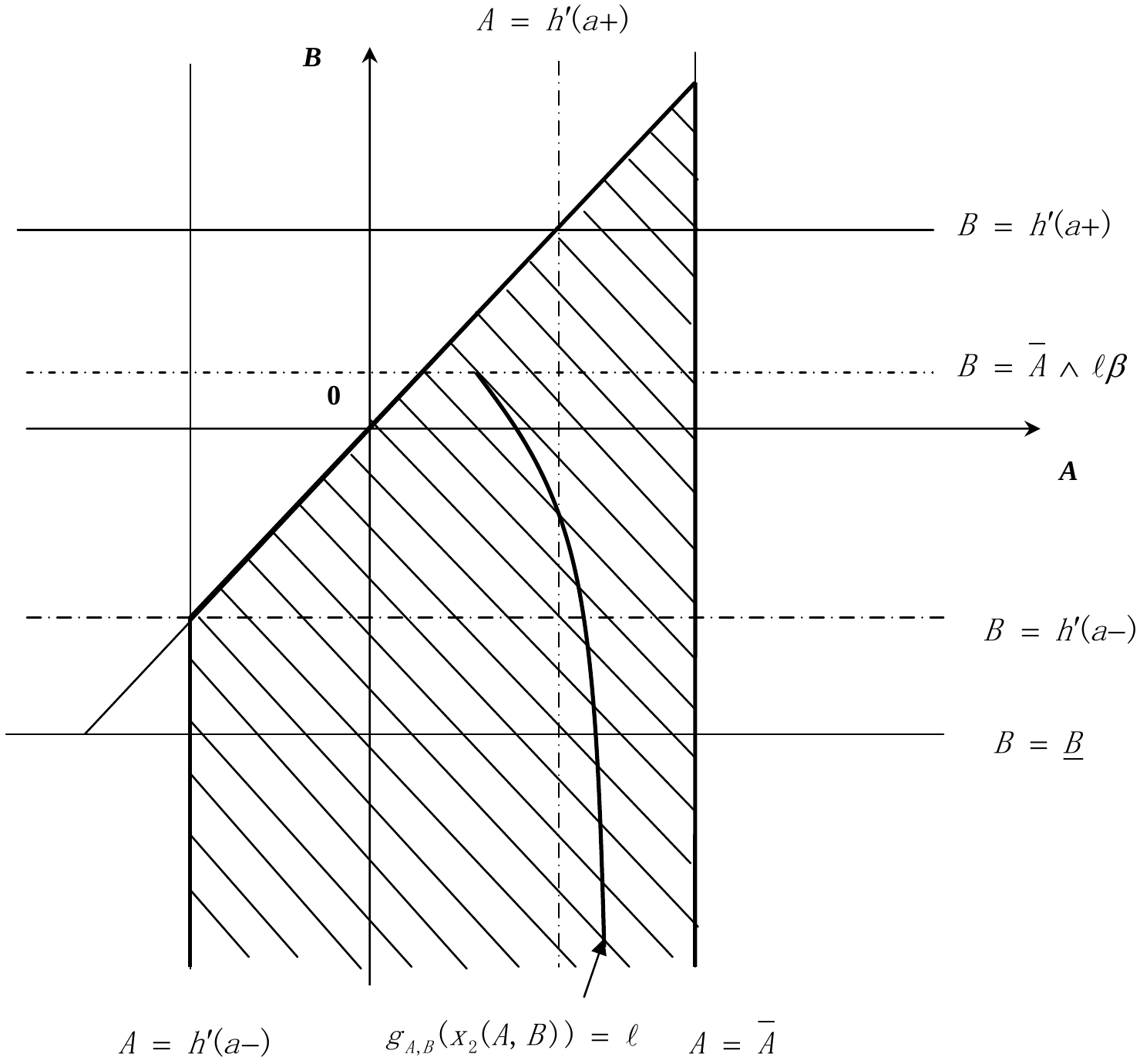}
\caption{(a) The shaded region is the set of all $(A, B)$ that
  satisfies (\ref{eq:ABcondition2-discounted}) and
  (\ref{eq:ABcondition3-discounted}). The unique maximum
  $x_2=x_2(A, B)\in(a,\infty)$ is well defined for all $(A,B)$ in the
  region. \newline
(b) For each $B\in \bigl(-\infty, \overline{A}\wedge \ell
\beta\bigr)$, there exists a unique
$\underline{A}(B)\in \bigl(B\vee h'(a-), \overline{A}\bigr)$ such that
$g_{\underline{A}(B),B}(x_2(\underline{A}(B),B))=\ell$. The curve
$A=\underline{A}(B)$ is decreasing.}
\label{fig:dis1b}
\end{figure}

\begin{lemma}
  \label{lem:optimalParameter-discounted}
  (a) For each  $B$ satisfying
  \begin{eqnarray}
&&\underline{B}<B<h'(a+), \label{eq:ABcondition1-discounted}
\end{eqnarray}
and each $A$ satisfying
\begin{equation}
 B< A,\label{eq:ABcondition2-discounted}
\end{equation}
$g_{A, B}(x)$ attains a unique minimum in $(-\infty, a)$ at $x_1=x_1(A,B)\in (-\infty, a)$.

For each  $A$ satisfying
\begin{eqnarray}
&&h'(a-)<A<\overline{A},\label{eq:ABcondition3-discounted}
\end{eqnarray}
and each $B$ satisfying (\ref{eq:ABcondition2-discounted}),
$g_{A, B}(x)$ attains a unique maximum in $(a, \infty)$ at $x_2=x_2(A,B) \in (a, \infty)$.\\
(b) For each fixed $A$ and $B$ satisfying
\eqref{eq:ABcondition1-discounted}-\eqref{eq:ABcondition2-discounted},
the local minimizer $x_1=x_1(A,B)$ is the unique solution in
$(-\infty, a)$ to
  \begin{eqnarray}
    \label{eq:x1ABequation-discounted}
    &&\Bigl(A-h'(a-)+\int_{x_1}^a e^{-\lambda_1 (y-a)}h''(y)dy\Bigr)e^{\lambda_1 (x_1-a)}\\
    &&\quad=\Bigl(B-h'(a-)+\int_{x_1}^a e^{\lambda_2 (y-a)}h''(y)dy\Bigr) e^{-\lambda_2 (x_1-a)}.\nonumber
  \end{eqnarray}
Furthermore, $g_{A, B}'(x)<0$ for $x\in (-\infty, x_1(A,B))$, $g_{A, B}'(x)>0$ for $x\in (x_1(A,B), a)$, and
\begin{eqnarray}
&&\lim_{x\downarrow-\infty} g_{A,B}(x)=+\infty,\label{eq:limg1-discounted}\\
&& g_{A,B} ''(x_1(A,B))>0.\label{eq:g''(x1)>0-discounted}
\end{eqnarray}

 For each fixed $A$ and $B$ satisfying \eqref{eq:ABcondition2-discounted}-\eqref{eq:ABcondition3-discounted},
 the local maximizer $x_2=x_2(A,B)$ is the unique solution in
 $(a,\infty)$ to
\begin{eqnarray}
\label{eq:x2ABequation-discounted}
&&\Bigl(A-h'(a+)-\int_a^{x_2} e^{-\lambda_1 (y-a)}h''(y)dy\Bigr)e^{\lambda_1 (x_2-a)}\\
&&\quad=\Bigl(B-h'(a+)-\int_a^{x_2} e^{\lambda_2 (y-a)}h''(y)dy\Bigr) e^{-\lambda_2 (x_2-a)}.\nonumber
\end{eqnarray}
Furthermore, $g_{A, B}'(x)>0$ for $x\in (a, x_2(A,B))$, $g_{A, B}'(x)<0$ for $x\in(x_2(A,B), \infty)$,
and
\begin{eqnarray}
&&\lim_{x\uparrow \infty} g_{A,B}(x)=-\infty, \label{eq:limg2-discounted}\\
&& g_{A,B} ''(x_2(A,B))<0.\label{eq:g''(x2)<0-discounted}
\end{eqnarray}
\end{lemma}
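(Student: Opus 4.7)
My plan is to handle the $x_1$ case in detail; the $x_2$ case is entirely parallel (up to reflecting signs and swapping $h'(a-)$ with $h'(a+)$). For convenience, write
$$
P(x) = A - h'(a-) + \int_x^a e^{-\lambda_1(y-a)}h''(y)\,dy, \qquad Q(x) = B - h'(a-) + \int_x^a e^{\lambda_2(y-a)}h''(y)\,dy,
$$
so that for $x<a$ equation (\ref{eq:g'-discounted}) reads $\tfrac{\sigma^2(\lambda_1+\lambda_2)}{2}g'(x) = P(x)e^{\lambda_1(x-a)} - Q(x)e^{-\lambda_2(x-a)}$, and (\ref{eq:x1ABequation-discounted}) is exactly $P(x_1)e^{\lambda_1(x_1-a)} = Q(x_1)e^{-\lambda_2(x_1-a)}$. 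Because $h''\ge 0$, both $P$ and $Q$ are non-increasing on $(-\infty, a)$; this monotonicity will be the single source of unimodality in what follows.

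The key step is to study the auxiliary function $F(x) := g'(x)\, e^{\lambda_2(x-a)}$. Differentiating and observing that the endpoint contributions from $P$ and $Q$ cancel cleanly against the exponential factors yields the identity
$$
F'(x) = \frac{2}{\sigma^2}\, P(x)\, e^{(\lambda_1+\lambda_2)(x-a)}.
$$
Hence $F'$ has the sign of $P$, which is non-increasing, so $F'$ changes sign at most once, and only from $+$ to $-$; therefore $F$ is either monotone or unimodal (up-then-down) on $(-\infty, a)$. I next evaluate the boundary values using the alternative form $F(x) = \tfrac{2}{\sigma^2(\lambda_1+\lambda_2)}\bigl[P(x)e^{(\lambda_1+\lambda_2)(x-a)} - Q(x)\bigr]$: the limit $F(a-) = \tfrac{2(A-B)}{\sigma^2(\lambda_1+\lambda_2)}$ is strictly positive by (\ref{eq:ABcondition2-discounted}), while Lemma \ref{lem:hproperty}(a) forces $P(x)e^{(\lambda_1+\lambda_2)(x-a)} \to 0$ as $x\downarrow -\infty$, leaving $F(-\infty) = -\tfrac{2(B-\underline{B})}{\sigma^2(\lambda_1+\lambda_2)}$, strictly negative by (\ref{eq:ABcondition1-discounted}). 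Combining unimodality with these signs, $F$ (hence $g'$) has a unique zero $x_1\in(-\infty,a)$.

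At $x_1$ I need $P(x_1)>0$ strictly (equivalently $Q(x_1)>0$, by the zero equation). Suppose to the contrary $P(x_1)\le 0$: then by monotonicity $P(x)\le 0$ on $[x_1,a)$, hence $F'\le 0$ there, giving $F(a-)\le F(x_1)=0$, contradicting $F(a-)>0$. Thus $P(x_1)>0$, and substituting the zero equation into (\ref{eq:g''-discounted}) yields $g''(x_1)=\tfrac{2}{\sigma^2}P(x_1)e^{\lambda_1(x_1-a)}>0$, proving (\ref{eq:g''(x1)>0-discounted}). The claimed sign of $g'$ on $(-\infty, x_1)$ and $(x_1, a)$ is immediate from uniqueness of the zero together with $g'(a-)>0$ and $g'(x)\to-\infty$ as $x\downarrow-\infty$ (extracted from (\ref{eq:g'-discounted}) by the same limit analysis). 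Finally, (\ref{eq:limg1-discounted}) is read off (\ref{eq:g3-discounted}): as $x\downarrow-\infty$ the $e^{-\lambda_2(x-a)}$ term dominates with strictly positive coefficient proportional to $B-\underline{B}$, while the $e^{\lambda_1(x-a)}$ term vanishes by Lemma \ref{lem:hproperty}(a) and $h'(x)/\beta$ remains bounded by Assumption \ref{assumption:h-discounted}(d).

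The argument for $x_2$ on $(a,\infty)$ is completely symmetric: I replace $P, Q$ by $\tilde P(x) = A - h'(a+) - \int_a^x e^{-\lambda_1(y-a)}h''(y)\,dy$ and $\tilde Q(x) = B - h'(a+) - \int_a^x e^{\lambda_2(y-a)}h''(y)\,dy$ (both still non-increasing), and work with $\tilde F(x) := g'(x)e^{-\lambda_1(x-a)}$, whose derivative turns out to be proportional to $\tilde Q(x)e^{-(\lambda_1+\lambda_2)(x-a)}$. Here $\tilde F(a+)>0$ by (\ref{eq:ABcondition2-discounted}) and $\tilde F(+\infty) = \tfrac{2(A-\overline{A})}{\sigma^2(\lambda_1+\lambda_2)} < 0$ by (\ref{eq:ABcondition3-discounted}) (again after invoking Lemma \ref{lem:hproperty}(a) to kill the damped exponential), so the same unimodality argument produces a unique zero $x_2$, at which $\tilde P(x_2), \tilde Q(x_2) < 0$ and $g''(x_2)<0$. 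The main obstacle I anticipate is the careful bookkeeping of the limits at $\pm\infty$: the integrals involved can diverge, and it is precisely the decay conditions in Assumption \ref{assumption:h-discounted}(e)--(f), packaged as Lemma \ref{lem:hproperty}(a), that guarantee the ``mismatched'' exponentials contribute negligibly. Once those limits are handled, everything else reduces to routine sign-tracking driven by the non-increasingness of $P, Q$ (and their tildes).
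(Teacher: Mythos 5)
Your argument via the auxiliary function $F(x)=g'(x)e^{\lambda_2(x-a)}$ is correct, and it takes a genuinely different route from the paper.  The paper splits the analysis of $x_1$ into two cases according to whether $B\le h'(a-)$ or $B>h'(a-)$; in the first case it introduces a threshold $x'$ where $Q(x')=0$ and argues separately on $(-\infty,x')$ and $[x',a)$, and in the second case it shows $g''>0$ directly on all of $(-\infty,a)$.  Your identity $F'(x)=\tfrac{2}{\sigma^2}P(x)e^{(\lambda_1+\lambda_2)(x-a)}$, which hinges on the exact cancellation $P'(x)e^{(\lambda_1+\lambda_2)(x-a)}=Q'(x)$, collapses the unimodality of $g'$ (up to the exponential factor) to the single fact that $P$ is non-increasing, and the two boundary values $F(-\infty)=-\tfrac{2(B-\underline{B})}{\sigma^2(\lambda_1+\lambda_2)}<0$ and $F(a-)=\tfrac{2(A-B)}{\sigma^2(\lambda_1+\lambda_2)}>0$ do the rest.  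The case split disappears, the same argument simultaneously rules out a zero plateau (if $P(x_1)\le 0$ then $F(a-)\le F(x_1)=0$), and the strict inequality $g''(x_1)>0$ drops out for free via $g''(x_1)=\tfrac{2}{\sigma^2}P(x_1)e^{\lambda_1(x_1-a)}$.  This is a tighter and more unified proof of part (a) and the first half of part (b).

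One small inaccuracy to fix.  For \eqref{eq:limg1-discounted} you claim that the ``$e^{\lambda_1(x-a)}$ term vanishes by Lemma~\ref{lem:hproperty}(a).''  Lemma~\ref{lem:hproperty}(a) gives $P(x)e^{(\lambda_1+\lambda_2)(x-a)}\to 0$, not $P(x)e^{\lambda_1(x-a)}\to 0$; the latter is false in general (e.g.\ $h''\equiv \text{const}$ gives $e^{\lambda_1(x-a)}\int_x^a e^{-\lambda_1(y-a)}h''(y)\,dy\to h''/\lambda_1\neq 0$).  The cleanest repair is not to go back to \eqref{eq:g3-discounted} at all: you have already established $\lim_{x\downarrow-\infty}g'(x)=-\infty$, and this alone yields $g(x)\to+\infty$ by integrating $g'$ backwards from any fixed point.  (Alternatively, the $e^{\lambda_1(x-a)}$ term in \eqref{eq:g3-discounted} is bounded below by $(A-h'(a-))e^{\lambda_1(x-a)}\to 0$, so it cannot pull $g$ down; the $\tfrac{1}{\lambda_2}Q(x)e^{-\lambda_2(x-a)}$ term dominates.)  With that one repair, the rest of the write-up — the strictness of $P(x_1)>0$, the sign of $g''(x_1)$, and the mirror argument for $x_2$ via $\tilde F(x)=g'(x)e^{-\lambda_1(x-a)}$ with $\tilde F'$ proportional to $\tilde Q(x)e^{-(\lambda_1+\lambda_2)(x-a)}$ — is sound.
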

\begin{remark}
(a) The set of  $(A, B)$ that satisfies
(\ref{eq:ABcondition1-discounted}) and
(\ref{eq:ABcondition2-discounted}) is the shaded region in Figure
\ref{fig:dis1a}. The set of  $(A, B)$ that satisfies
(\ref{eq:ABcondition2-discounted}) and
(\ref{eq:ABcondition3-discounted}) is the shaded region in Figure
\ref{fig:dis1b}. \\
(b)
Note that
\begin{eqnarray}
&&\bigl(A-\lambda_1\int_a^{x_2(A,B)} e^{-\lambda_1 (y-a)}h'(y)dy\bigr)e^{\lambda_1 (x_2(A,B)-a)}\nonumber\\
&&\quad =\bigl(A-h'(a+)-\int_a^{x_2(A,B)} e^{-\lambda_1 (y-a)}h''(y)dy\bigr)e^{\lambda_1 (x_2(A,B)-a)}+h'(x_2(A,B))\nonumber\\
&&\quad =\bigl(B-h'(a+)-\int_a^{x_2(A,B)} e^{\lambda_2 (y-a)}h''(y)dy\bigr)e^{-\lambda_2 (x_2(A,B)-a)}+h'(x_2(A,B))\nonumber\\
&&\quad =\bigl(B+\lambda_2\int_a^{x_2(A,B)} e^{\lambda_2 (y-a)}h'(y)dy\bigr)e^{-\lambda_2 (x_2(A,B)-a)},\label{eq:first=second-discounted}
\end{eqnarray}
where the first and third equalities follow from integration by
parts, and the second is due to
the definition of $x_2(A,B)$ in \eqref{eq:x2ABequation-discounted}.
This provide an alternative characterization of $x_2(A,B)$ in
\eqref{eq:x2ABequation-discounted}. Similarly, $x_1(A,B)$ has an
alternative characterization.
\end{remark}
\begin{proof}
We only prove the existence of $x_1$ and the properties of $g(x)$ in $x\in(-\infty,a)$.
The proof for the existence of $x_2$ and the properties of $g(x)$ in $x\in(a,\infty)$ is similar,
and it is omitted.

In order to prove the existence of $x_1$, we divide $B\in\bigl(\underline{B}, h'(a+)\bigr)$ into two cases:
$B\in\bigl(\underline{B}, h'(a-)\bigr]$
and $B\in\bigl(h'(a-), h'(a+)\bigr)$.

\textbf{Case 1.} $B\in\bigl(\underline{B}, h'(a-)\bigr]$.

Note that $h$ is convex, we have $h''(x)\geq0$ for all $x\in\R$ except $x=a$.
Therefore, $\int_{x}^a e^{\lambda_2 (y-a)}h''(y)dy\geq 0$ is decreasing in $x\in(-\infty,a)$.
Then for fixed $B\in\bigl(\underline{B}, h'(a-)\bigr]$,
there exists an $x'$ with $x'\in(-\infty,a]$ such that
\begin{eqnarray}
\label{eq:B=-discounted}
B=h'(a-)-\int_{x'}^a e^{\lambda_2 (y-a)}h''(y)dy.
\end{eqnarray}
We are going to prove that $g'(x)$ is strictly increasing in $x\in(-\infty,x')$
and
\begin{eqnarray}
&& \lim_{x\downarrow -\infty} g'(x)=-\infty, \label{eq:gprimeminusinfinty}\\
&& \lim_{x\uparrow x'} g'(x)>0, \label{eq:gprimexprime} \\
&&  g'(x)>0 \quad \text{ for } x\in (x', a). \label{eq:gprimepositive}
\end{eqnarray}
Since $g'(x)$ is continuous and strictly increasing in $x\in (-\infty, x')$,
(\ref{eq:gprimeminusinfinty}) and (\ref{eq:gprimexprime}) imply that
there exists a unique $x_1$ with $x_1\in(-\infty,x')$ such that
\begin{eqnarray*}
g'(x)
\left\{
\begin{array}{ll}
<0, & x<x_1,\\
=0, & x=x_1,\\
>0, & x_1<x<x'.
\end{array}
\right.
\end{eqnarray*}
Combining this  with (\ref{eq:gprimepositive}), we have
\begin{eqnarray*}
g'(x)
\left\{
\begin{array}{ll}
<0, & x<x_1,\\
=0, & x=x_1,\\
>0, & x_1<x<a,
\end{array}
\right.
\end{eqnarray*}
from which one proves the existence of $x_1$ and properties of $g(x)$
in $(-\infty,a)$.

It remains to prove that $g'(x)$ is strictly increasing in $x\in(-\infty,x')$,
and that (\ref{eq:g''(x1)>0-discounted}), and
(\ref{eq:gprimeminusinfinty})-(\ref{eq:gprimepositive}) hold.
We first prove that $g'(x)$ is strictly increasing in $x\in(-\infty,x')$.
For $x\in(-\infty,x')$,
\begin{eqnarray*}
B-h'(a-)+\int_x^a e^{\lambda_2 (y-a)}h''(y)dy\geq0
\end{eqnarray*}
and
\begin{eqnarray}
A-h'(a-)+\int_x^a e^{-\lambda_1 (y-a)}h''(y)dy
&>&B-h'(a-)+\int_x^a e^{-\lambda_1 (y-a)}h''(y)dy\nonumber\\
&\geq&B-h'(a-)+\int_x^a e^{\lambda_2 (y-a)}h''(y)dy\nonumber\\
&\geq&0,\label{eq:A>0-discounted}
\end{eqnarray}
where the first inequality is due to \eqref{eq:ABcondition2-discounted}.
Using \eqref{eq:g''-discounted}, we further have that for $x\in(-\infty,x')$,
\begin{eqnarray}
g''(x)&=&\frac{2}{\sigma^2}\frac{1}{\lambda_1+\lambda_2}\Bigl[
 \Bigl(A-h'(a-)+\int_x^a e^{-\lambda_1 (y-a)}h''(y)dy\Bigr)\lambda_1e^{\lambda_1 (x-a)}\nonumber\\
&&+\Bigl(B-h'(a-)+\int_x^a e^{\lambda_2 (y-a)}h''(y)dy\Bigr) \lambda_2e^{-\lambda_2 (x-a)}
  \Bigr]\nonumber\\
&>&0.\label{eq:g''>0-discounted}
\end{eqnarray}
This proves $g'(x)$ is strictly increasing in $(-\infty, x')$.

To see (\ref{eq:gprimeminusinfinty}), it follows from \eqref{eq:g'-discounted} that
\begin{eqnarray*}
\lim_{x\downarrow-\infty}\frac{g'(x)}{e^{-\lambda_2(x-a)}}&=&\lim_{x\downarrow
-\infty} \frac{2}{\sigma^2}\frac{1}{\lambda_1+\lambda_2}\Bigl[
-\Bigl(B-h'(a-)+\int_x^a e^{\lambda_2 (y-a)}h''(y)dy\Bigr) \\
&& +
 \Bigl(A-h'(a-)+\int_x^a e^{-\lambda_1
h'(x)(y-a)}h''(y)dy\Bigr)e^{(\lambda_1+\lambda_2) (x-a)}
  \Bigr].
\end{eqnarray*}
To  evaluate this limit, we first have
\begin{eqnarray}
\lefteqn{\lim_{x\downarrow -\infty}\Bigl(A-h'(a-)+\int_x^a e^{-\lambda_1
h'(x)(y-a)}h''(y)dy\Bigr)e^{(\lambda_1+\lambda_2) (x-a)}}\nonumber\\
&=&\lim_{x\downarrow -\infty}\int_x^a e^{-\lambda_1
h'(x)(y-a)}h''(y)dy\cdot e^{(\lambda_1+\lambda_2) (x-a)}\nonumber\\
&=& 0, \label{eq:lim-first-discounted}
\end{eqnarray}
where the last equality follows from (\ref{eq:limint-discounted}).
Next,
\begin{eqnarray}
&&\lim_{x\downarrow -\infty}\Bigl(B-h'(a-)+\int_x^a e^{\lambda_2
h'(x)(y-a)}h''(y)dy\Bigr) \nonumber\\
&&\quad =B-\underline{B}.\label{eq:lim-second-discounted}
\end{eqnarray}
Because $B-\underline{B}>0$, \eqref{eq:lim-first-discounted} and \eqref{eq:lim-second-discounted} imply that
\begin{eqnarray}
\label{eq:limg'-discounted}
\lim_{x\downarrow -\infty} g'(x)=-\infty.
\end{eqnarray}

To see (\ref{eq:gprimexprime}),  it follows from
\eqref{eq:g'-discounted}  that
\begin{eqnarray}
\lim_{x\uparrow x'} g'(x)&=&\frac{2}{\sigma^2}\frac{1}{\lambda_1+\lambda_2}\Bigl[
 \bigl(A-h'(a-)+\int_{x'}^a e^{-\lambda_1 (y-a)}h''(y)dy\bigr)e^{\lambda_1 (x'-a)}\nonumber\\
&&\quad \quad \quad \quad \quad -\bigl(B-h'(a-)+\int_{x'}^a e^{\lambda_2 (y-a)}h''(y)dy\bigr) e^{-\lambda_2 (x'-a)}
  \Bigr]\nonumber\\
&=&\frac{2}{\sigma^2}\frac{1}{\lambda_1+\lambda_2}\Bigl[
 \bigl(A-h'(a-)+\int_{x'}^a e^{-\lambda_1 (y-a)}h''(y)dy\bigr)e^{\lambda_1 (x'-a)}
  \Bigr]\nonumber\\
&\geq&\frac{2}{\sigma^2}\frac{1}{\lambda_1+\lambda_2}\Bigl[
 \bigl(A-h'(a-)+\int_{x'}^a e^{\lambda_2 (y-a)}h''(y)dy\bigr)e^{\lambda_1 (x'-a)}
  \Bigr]\nonumber\\
&=&\frac{2}{\sigma^2}\frac{1}{\lambda_1+\lambda_2}(A-B)e^{\lambda_1 (x'-a)}\label{eq:limg'1-discounted}\\
&>&0,\nonumber
\end{eqnarray}
where the second and last equalities are due to \eqref{eq:B=-discounted} and
the last inequality is due to \eqref{eq:ABcondition2-discounted}.

To see (\ref{eq:gprimepositive}),
for $x\in[x',a)$, \eqref{eq:B=-discounted} implies that
$B-h'(a-)+\int_{x}^a e^{\lambda_2 (y-a)}h''(y)dy\leq0$,
which plus
\begin{eqnarray}
A-h'(a-)+\int_x^a e^{-\lambda_1 (y-a)}h''(y)dy
&>&B-h'(a-)+\int_x^a e^{-\lambda_1 (y-a)}h''(y)dy\nonumber\\
&\geq&B-h'(a-)+\int_x^a e^{\lambda_2 (y-a)}h''(y)dy\nonumber
\end{eqnarray}
imply that
\begin{eqnarray*}
g'(x)&=&\frac{2}{\sigma^2}\frac{1}{\lambda_1+\lambda_2}\Big[
 \bigl(A-h'(a-)+\int_{x}^a e^{-\lambda_1 (y-a)}h''(y)dy\bigr)e^{\lambda_1 (x-a)}\\
&& -\bigl(B-h'(a-)+\int_{x}^a e^{\lambda_2 (y-a)}h''(y)dy\bigr) e^{-\lambda_2 (x-a)}
  \Big]\\
&>& \frac{2}{\sigma^2}\frac{1}{\lambda_1+\lambda_2}\Big[\bigl(B-h'(a-)+\int_{x}^a e^{\lambda_2 (y-a)}h''(y)dy\bigr)
\bigl(e^{\lambda_1 (x-a)}-e^{-\lambda_2 (x-a)}\bigr)\Big]\\
&\geq&0.
\end{eqnarray*}

\textbf{Case 2.}  $B\in\bigl(h'(a-), h'(a+)\bigr)$

It is similar to prove \eqref{eq:g''>0-discounted}, \eqref{eq:limg'-discounted} and \eqref{eq:limg'1-discounted},
we have
\begin{eqnarray}
&&g''(x)>0 \quad\text{for $x\in(-\infty,a)$},\label{eq:g''>0-discounted1}\\
&&\lim_{x\downarrow -\infty} g'(x)=-\infty\label{eq:limg'2-discounted}
\end{eqnarray}
and
\begin{eqnarray}
\lim_{x\uparrow a} g'(x)=\frac{2}{\sigma^2}\frac{1}{\lambda_1+\lambda_2}(A-B)>0.\nonumber
\end{eqnarray}
Therefore, there exists a unique $x_1$ such that
\begin{eqnarray*}
g'(x)
\left\{
\begin{array}{ll}
<0, & x<x_1,\\
=0, & x=x_1,\\
>0, & x_1<x<a.
\end{array}
\right.
\end{eqnarray*}

Limit \eqref{eq:limg1-discounted} can immediately be obtained by
\eqref{eq:limg'-discounted} and \eqref{eq:limg'2-discounted}.
Inequalities
\eqref{eq:g''>0-discounted} and \eqref{eq:g''>0-discounted1} and the definition of $x_1$
easily imply \eqref{eq:g''(x1)>0-discounted}.
\end{proof}

\begin{lemma}
\label{lem:x_i-discounted}
Suppose $A$ and $B$ satisfy \eqref{eq:ABcondition1-discounted}-\eqref{eq:ABcondition2-discounted},
for fixed $B$, the local minimizer  $x_1(A,B)$ is continuous and strictly decreasing in $A$;
for fixed $A$, the local minimizer  $x_1(A,B)$ is continuous and strictly increasing in $B$.
Suppose $A$ and $B$ satisfy \eqref{eq:ABcondition2-discounted}-\eqref{eq:ABcondition3-discounted},
for fixed $B$, the local maximizer $x_2(A,B)$ is continuous and strictly increasing in $A$;
for fixed $A$, the local maximizer $x_2(A,B)$ is continuous and strictly decreasing in $B$.

Furthermore,
\begin{eqnarray}
&&\lim_{B\downarrow \underline{B}} x_1(A,B)=-\infty, \label{eq:x1ABlimit1-discounted}\\
&&\lim_{B\uparrow A} x_1(A,B)=a \quad \text{for $A<h'(a+)$}, \label{eq:x1ABlimit2-discounted}\\
&&\lim_{A\downarrow B} x_2(A,B)=a \quad \text{for $B>h'(a-)$},\label{eq:x2ABlimit1-discounted}\\
&&\lim_{A\uparrow \overline{A}} x_2(A,B)=\infty .\label{eq:x2ABlimit2-discounted}
\end{eqnarray}
\end{lemma}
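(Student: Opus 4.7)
My plan is to apply the implicit function theorem (IFT) to the defining equations \eqref{eq:x1ABequation-discounted}--\eqref{eq:x2ABequation-discounted}, and then to deduce each boundary limit by combining monotonicity with a pass-to-the-limit argument in the defining equation. Set $F(x;A,B):=g_{A,B}'(x)$ as given in \eqref{eq:g'-discounted}, so that $x_i=x_i(A,B)$ is characterized by $F(x_i;A,B)=0$. The crucial observation is that $F$ is \emph{linear} in $(A,B)$, so
\begin{equation*}
\frac{\partial F}{\partial A}(x;A,B)=\frac{2}{\sigma^2(\lambda_1+\lambda_2)}e^{\lambda_1(x-a)}>0,\qquad \frac{\partial F}{\partial B}(x;A,B)=-\frac{2}{\sigma^2(\lambda_1+\lambda_2)}e^{-\lambda_2(x-a)}<0.
\end{equation*}
By \eqref{eq:g''(x1)>0-discounted} and \eqref{eq:g''(x2)<0-discounted}, $\partial F/\partial x$ is positive at $x_1$ and negative at $x_2$. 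The IFT therefore produces $C^1$ parametrizations $(A,B)\mapsto x_i(A,B)$ with
\begin{equation*}
\frac{\partial x_1}{\partial A}=-\frac{\partial F/\partial A}{g_{A,B}''(x_1)}<0,\qquad \frac{\partial x_1}{\partial B}=-\frac{\partial F/\partial B}{g_{A,B}''(x_1)}>0,
\end{equation*}
and with the opposite signs for $x_2$, establishing continuity and the stated strict monotonicity throughout the parameter regions displayed in Figures~\ref{fig:dis1a}(a) and \ref{fig:dis1b}(a).

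By monotonicity, each of the four boundary limits exists in $[-\infty,a]$ or $[a,+\infty]$. To prove \eqref{eq:x1ABlimit1-discounted}, I argue by contradiction: suppose $x_1^\ast:=\lim_{B\downarrow \underline{B}}x_1(A,B)\in(-\infty,a)$. Using Assumption~\ref{assumption:h-discounted}(f) with dominated convergence to pass to the limit in \eqref{eq:x1ABequation-discounted}, and then substituting $\underline{B}=h'(a-)-\int_{-\infty}^a e^{\lambda_2(y-a)}h''(y)dy$, one obtains
\begin{equation*}
\Bigl(A-h'(a-)+\int_{x_1^\ast}^a e^{-\lambda_1(y-a)}h''(y)dy\Bigr)e^{\lambda_1(x_1^\ast-a)}=-\Bigl(\int_{-\infty}^{x_1^\ast}e^{\lambda_2(y-a)}h''(y)dy\Bigr)e^{-\lambda_2(x_1^\ast-a)}.
\end{equation*}
The right-hand side is $\le 0$, while the left-hand side is strictly positive, by the sign estimate used in Case~1 of the proof of Lemma~\ref{lem:optimalParameter-discounted} (compare \eqref{eq:A>0-discounted}), extended by a case split on the sign of $A-h'(a-)$; this contradiction forces $x_1^\ast=-\infty$. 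For \eqref{eq:x1ABlimit2-discounted} note that $F(a-;A,B)=\tfrac{2}{\sigma^2(\lambda_1+\lambda_2)}(A-B)\downarrow 0$ as $B\uparrow A$, while $F(\cdot;A,B)$ is strictly increasing on $(x_1,a)$ by Lemma~\ref{lem:optimalParameter-discounted}(b); an interior cluster point $x_1^{\ast\ast}<a$ would force $F(\cdot;A,A)\equiv 0$ on $[x_1^{\ast\ast},a)$, incompatible with $g_{A,A}''>0$ near $x_1^{\ast\ast}$. Hence $\lim_{B\uparrow A}x_1=a$. The limits \eqref{eq:x2ABlimit1-discounted}--\eqref{eq:x2ABlimit2-discounted} follow by the same scheme upon swapping $A\leftrightarrow B$ and $\lambda_1\leftrightarrow\lambda_2$ and invoking Assumption~\ref{assumption:h-discounted}(e) together with Lemma~\ref{lem:hproperty}(a) in place of (f).

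The main obstacle is the pair \eqref{eq:x1ABlimit1-discounted} and \eqref{eq:x2ABlimit2-discounted}: passing to the limit in the defining equation relies on the fine tail decay estimates in Lemma~\ref{lem:hproperty}(a), and establishing the strict positivity (respectively negativity) of the limiting left-hand side requires the aforementioned case split on the relative position of $A$ and $h'(a-)$ (respectively $B$ and $h'(a+)$) together with the strict form of Assumption~\ref{assumption:h-discounted}(d). The IFT step and the remaining limits \eqref{eq:x1ABlimit2-discounted} and \eqref{eq:x2ABlimit1-discounted} are then comparatively routine.
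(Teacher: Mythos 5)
Your argument is essentially the paper's: the implicit function theorem computation (with the explicit formulas for $\partial x_i/\partial A$ and $\partial x_i/\partial B$ and the sign information from \eqref{eq:g''(x1)>0-discounted} and \eqref{eq:g''(x2)<0-discounted}) is identical, and the boundary limits rest on the same defining relations \eqref{eq:x1ABequation-discounted}--\eqref{eq:x2ABequation-discounted}. The one structural difference is in the boundary limits: the paper works forward, viewing $B$ as the dependent variable and showing (via \eqref{eq:lim=0-discounted}, which uses Lemma \ref{lem:hproperty}(a)) that $B\downarrow\underline{B}$ as $x_1\downarrow-\infty$, and then invokes monotonicity; you argue by contradiction, assuming $x_1(A,B)\to x_1^\ast>-\infty$ and passing to the limit. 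These are logically dual, and your version actually avoids the tail estimate for this step since passing to the limit in the integrals is just continuity when $x_1^\ast$ is finite (so citing Lemma \ref{lem:hproperty}(a) as the obstacle here is a red herring).

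One point needs more care. Your contradiction for \eqref{eq:x1ABlimit1-discounted} asserts that the left-hand side of the limiting identity is ``strictly positive by the sign estimate \eqref{eq:A>0-discounted}, extended by a case split on the sign of $A-h'(a-)$.'' That estimate, taken at $B=\underline{B}$, gives only
\[
A-h'(a-)+\int_{x_1^\ast}^a e^{-\lambda_1(y-a)}h''(y)\,dy \;>\; -\int_{-\infty}^{x_1^\ast}e^{\lambda_2(y-a)}h''(y)\,dy \;\le\; 0,
\]
which does not by itself make the left-hand coefficient positive. You can close the gap either by comparing the two sides directly (since $x_1^\ast<a$ one has $e^{\lambda_1(x_1^\ast-a)}<e^{-\lambda_2(x_1^\ast-a)}$, and then $P>Q$ with $Q\le0$ already gives $Pe^{\lambda_1(x_1^\ast-a)}>Qe^{-\lambda_2(x_1^\ast-a)}$, contradicting equality), or by noting that the common value of both sides equals $\tfrac{\sigma^2}{2}g_{A,B}''(x_1(A,B))>0$ for every $B>\underline{B}$ by \eqref{eq:g''(x1)>0-discounted}, so its limit is $\ge0$; paired with $R_1\le0$ this forces $L_1=R_1=0$, hence $h''\equiv0$ on $(-\infty,x_1^\ast)$, after which $A>\underline{B}$ makes the left coefficient strictly positive. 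A similar tightening is needed in your sketch of \eqref{eq:x1ABlimit2-discounted}: a finite cluster point gives $F(x_1^{\ast\ast};A,A)=F(a-;A,A)=0$ with $F(\cdot;A,A)$ nondecreasing in between, which forces $F\equiv0$ on the interval, and you then must argue why that is incompatible with the hypotheses rather than simply citing positivity of $g''$.
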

\begin{proof}
The {\em Implicit Function Theorem} implies the continuity of $x_i(A,B)$, $i=1,2$.
Applying the {\em Implicit Function Theorem} to \eqref{eq:x1ABequation-discounted} and \eqref{eq:x2ABequation-discounted},
we have that
\begin{eqnarray}
\frac{\partial x_1(A,B)}{\partial A}&=&-\frac{2}{\sigma^2}\frac{1}{\lambda_1+\lambda_2}\frac{e^{\lambda_1 (x_1(A,B)-a)}}{g''(x_1(A,B))}<0,\label{eq:dx1/dA-discounted}\\
\frac{\partial x_1(A,B)}{\partial B}&=&\frac{2}{\sigma^2}\frac{1}{\lambda_1+\lambda_2}\frac{e^{-\lambda_2 (x_1(A,B)-a)}}{g''(x_1(A,B))}>0,\label{eq:dx1/dB-discounted}\\
\frac{\partial x_2(A,B)}{\partial A}&=&-\frac{2}{\sigma^2}\frac{1}{\lambda_1+\lambda_2}\frac{e^{\lambda_1 (x_2(A,B)-a)}}{g''(x_2(A,B))}>0,\label{eq:dx2/dA-discounted}\\
\frac{\partial x_2(A,B)}{\partial B}&=&\frac{2}{\sigma^2}\frac{1}{\lambda_1+\lambda_2}\frac{e^{-\lambda_2 (x_2(A,B)-a)}}{g''(x_2(A,B))}<0,\label{eq:dx2/dB-discounted}
\end{eqnarray}
where in obtaining \eqref{eq:dx1/dA-discounted} and
\eqref{eq:dx1/dB-discounted} we
have used $g''(x_1(A,B))>0$ in \eqref{eq:g''(x1)>0-discounted},
and in obtaining
\eqref{eq:dx2/dA-discounted} and \eqref{eq:dx2/dB-discounted} we have used
$g''(x_2(A,B))<0$ in \eqref{eq:g''(x2)<0-discounted}.

Fix $A$ satisfying $A<h'(a+)$, when $x_1\uparrow a$,
\eqref{eq:x1ABequation-discounted} gives that $B\uparrow A$.
From the monotonicity between $x_1$ and $B$, we must have
\eqref{eq:x1ABlimit2-discounted}.

We next prove \eqref{eq:x1ABlimit1-discounted}.
From \eqref{eq:x1ABequation-discounted}, we have
\begin{eqnarray}
&&\Bigl(A-h'(a-)+\int_{x_1}^a e^{-\lambda_1 (y-a)}h''(y)dy\Bigr)e^{(\lambda_1+\lambda_2) (x_1-a)}\nonumber\\
&&\quad=B-h'(a-)+\int_{x_1}^a e^{\lambda_2 (y-a)}h''(y)dy.\label{eq:x1ABequation1-discounted}
\end{eqnarray}
We will show that
\begin{eqnarray}
\label{eq:lim=0-discounted}
\lim_{x_1\downarrow -\infty} \Bigl(A-h'(a-)+\int_{x_1}^a e^{-\lambda_1 (y-a)}h''(y)dy\Bigr)e^{(\lambda_1+\lambda_2) (x_1-a)}=0.
\end{eqnarray}
This, together with \eqref{eq:x1ABequation1-discounted}, implies that
\begin{eqnarray*}
0&=&\lim_{x_1\downarrow -\infty}\Bigl(B-h'(a-)+\int_{x_1}^a e^{\lambda_2 (y-a)}h''(y)dy\Bigr)\\
&=&\lim_{x_1\downarrow -\infty}B -h'(a-)+\int_{-\infty}^a e^{\lambda_2
  (y-a)}h''(y)dy\\
&=&\lim_{x_1 \downarrow -\infty} B - \underline{B},
\end{eqnarray*}
from which one has that $B\downarrow \underline{B}$ when
$x_1\downarrow -\infty$.
Using the monotonicity between $x_1$ and $B$ (see
(\ref{eq:dx1/dB-discounted})), we must have
\eqref{eq:x1ABlimit1-discounted}.

It remains to prove \eqref{eq:lim=0-discounted}. To see this,
\begin{eqnarray*}
&&\lim_{x_1\downarrow -\infty} \Bigl(A-h'(a-)+\int_{x_1}^a e^{-\lambda_1 (y-a)}h''(y)dy\Bigr)e^{(\lambda_1+\lambda_2) (x_1-a)}\\
&&\quad=\lim_{x_1\downarrow -\infty} \int_{x_1}^a e^{-\lambda_1 (y-a)}h''(y)dy\cdot e^{(\lambda_1+\lambda_2) (x_1-a)}\\
&&\quad=0,
\end{eqnarray*}
where the last equality is due to \eqref{eq:limint-discounted}.
Therefore, we have proved \eqref{eq:lim=0-discounted}.

The proof for \eqref{eq:x2ABlimit1-discounted} and \eqref{eq:x2ABlimit2-discounted} is similar.
\end{proof}

\begin{lemma}
\label{lem:underlineAoverlineB-discounted}
(a) For each
\begin{equation}
  \label{eq:Brange}
B\in \bigl(-\infty, \overline{A}\wedge \ell \beta\bigr),
\end{equation}
there exists a
unique
\begin{displaymath}
\underline{A}(B)\in \bigl(B\vee h'(a-), \overline{A}\bigr)
\end{displaymath}
 such that
\begin{eqnarray}
\label{eq:underlineA-discounted}
g_{\underline{A}(B),B}(x_2(\underline{A}(B),B))=\ell.
\end{eqnarray}
Furthermore,
for $B\in(-\infty, \overline{A}\wedge \ell \beta)$,
\begin{eqnarray}
\label{eq:dunderlineA/dB-discounted}
\frac{d\underline{A}(B)}{d
  B}&=&-\frac{\lambda_1}{\lambda_2}e^{-(\lambda_1+\lambda_2)(x_2(\overline{A}(B),B)-a)}<0. \end{eqnarray}
Therefore, function $A=\underline{A}(B)$ is strictly decreasing in
$B\in(-\infty, \overline{A}\wedge \ell \beta)$; see Figure
\ref{fig:dis1b} for an illustration.
For $A\in(\underline{A}(B),\overline{A})$,
\begin{eqnarray}
\label{eq:g(x2)>l-discounted}
 g_{A,B}(x_2(A,B))>\ell.
\end{eqnarray}

(b) For each
\begin{equation}
  \label{eq:Arange}
  A \in \bigl(\underline{B}\vee(-k\beta), +\infty\bigr),
\end{equation}
there exists a unique
\begin{displaymath}
 \overline{B}(A) \in \bigl(\underline{B}, A\wedge h'(a+)\bigr)
\end{displaymath}
such that
\begin{eqnarray}
\label{eq:overlineB-discounted}
g_{A,\overline{B}(A)}(x_1(A,\overline{B}(A)))=-k.
\end{eqnarray}
Furthermore,
for $A\in(\underline{B}\vee (-k \beta),  \infty)$,
\begin{eqnarray}
\frac{d\overline{B}(A)}{d
  A}&=&-\frac{\lambda_2}{\lambda_1}e^{(\lambda_1+\lambda_2)(x_1(A,\overline{B}(A))-a)}<0.\label{eq:doverlineB/dA-discounted}
\end{eqnarray}
Therefore,  function $B=\overline{B}(A)$ is strictly decreasing in
$A\in (\underline{B}\vee (-k \beta),  \infty)$; see Figure
\ref{fig:dis1a} for an illustration.
For $B\in(\underline{B},\overline{B}(A))$,
\begin{eqnarray}
\label{eq:g(x1)<-k-discounted}
 g_{A,B}(x_1(A,B))<-k.
\end{eqnarray}

\begin{figure}[tb]
  \centering
  \includegraphics[width=14cm]{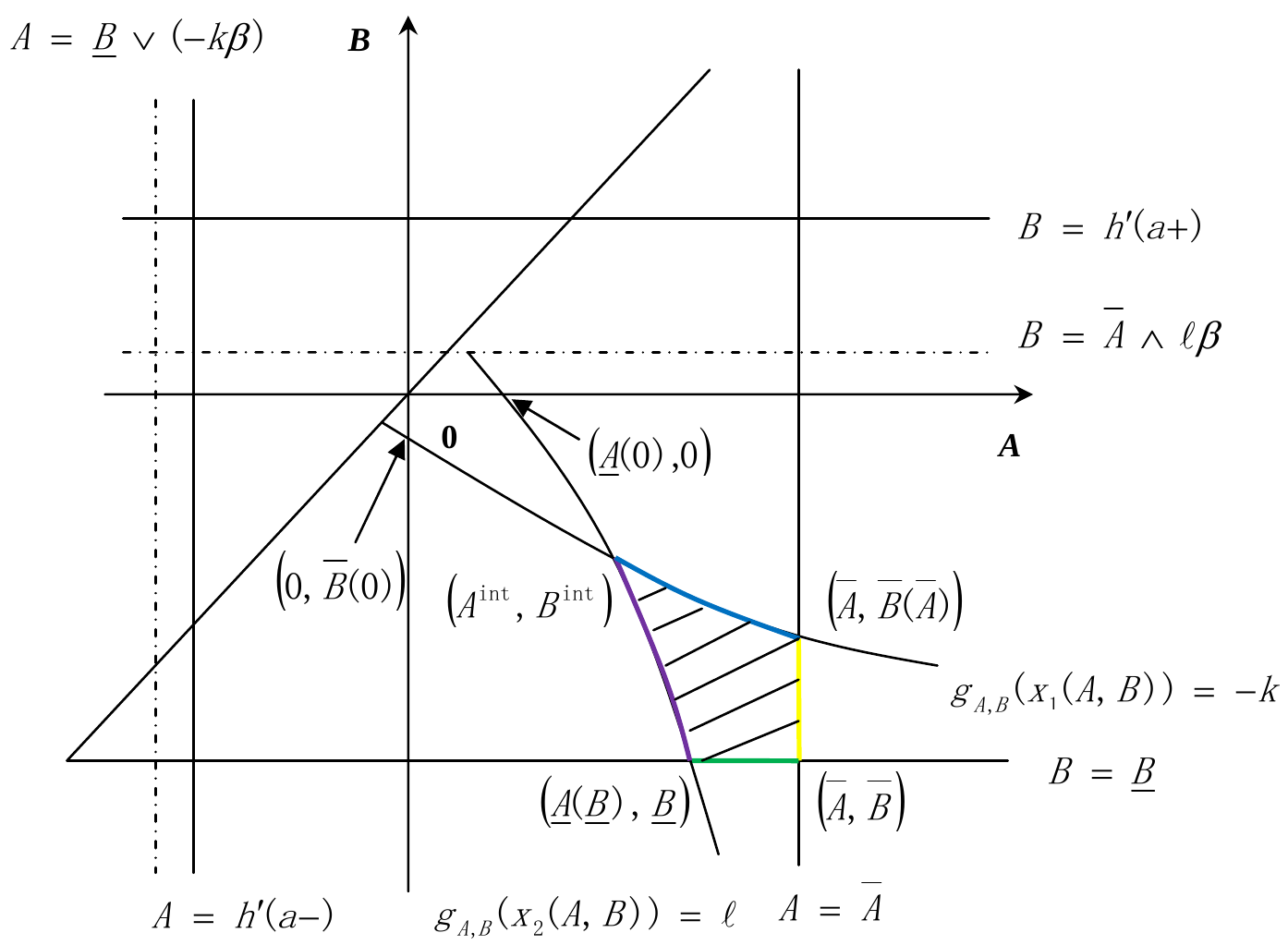}
\caption{The two curves $\{(\underline{A}(B),B):B\in[\underline{B},0]\}$ and
$\{(A,\overline{B}(A)):A\in[0,\overline{A}]\}$ have a unique intersection point
$(A^{\rm int},B^{\rm int})$ that satisfies $0<\underline{A}(0)<A^{\rm int}<\underline{A}(\underline{B})<\overline{A}$
and $0>\overline{B}(0)>B^{\rm int}>\overline{B}(\overline{A})>\underline{B}$.
For any $(A,B)$ in the shaded region $g_{A,\overline{B}(A)}(x_1(A,\overline{B}(A)))<-k$ and
$g_{\underline{A}(B),B}(x_2(\underline{A}(B),B))>\ell$.}
\label{fig:dis3}
\end{figure}

(c)
The two curves
 $\{(\underline{A}(B),B): B\in (-\infty, 0]\}$ and
 $\{(A,\overline{B}(A)): A\in [0, \infty)\}$ have a unique
 intersection point $(A^{\rm int}, B^{\rm int})$ that satisfies
 \begin{equation}
   \label{eq:ABintfix}
   \overline{B}(A^{\rm int}) = B^{\rm int} \quad \text{ and }
   \underline{A}(B^{\rm int}) = A^{\rm int}
 \end{equation}
with
\begin{eqnarray}
&& 0<\underline{A}(0)< A^{\rm int} <\underline{A}(\underline{B}) <
\overline{A},\label{eq:Aint}\\
&&
0>\overline{B}(0)> B^{\rm int} >\overline{B}(\overline{A}) >
\underline{B}.\label{eq:Bint}
\end{eqnarray}
See Figure \ref{fig:dis3} for an illustration.
\end{lemma}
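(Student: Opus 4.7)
The three parts of this lemma rest on two common ingredients: the envelope theorem applied to $g_{A,B}$ at its critical points $x_1(A,B)$ and $x_2(A,B)$, and the intermediate value theorem (IVT) applied to a suitably monotone scalar function. Parts (a) and (b) are structurally symmetric, and part (c) reduces to a one-dimensional fixed-point problem built out of (a) and (b).

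For part (a), I would fix $B$ in the range \eqref{eq:Brange} and define $\Psi(A,B):=g_{A,B}(x_2(A,B))$ for $A\in(B\vee h'(a-),\overline{A})$. Since $x_2$ is an interior critical point of $g_{A,B}$ (Lemma~\ref{lem:optimalParameter-discounted}), the envelope theorem gives
\[
\frac{\partial\Psi}{\partial A}=\frac{2}{\sigma^2(\lambda_1+\lambda_2)\lambda_1}e^{\lambda_1(x_2-a)}>0,\qquad
\frac{\partial\Psi}{\partial B}=\frac{2}{\sigma^2(\lambda_1+\lambda_2)\lambda_2}e^{-\lambda_2(x_2-a)}>0,
\]
so $A\mapsto\Psi(A,B)$ is strictly increasing and continuous. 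I would then pin down the endpoints. As $A\uparrow\overline{A}$, Lemma~\ref{lem:x_i-discounted} gives $x_2\to\infty$; combining the alternative representation \eqref{eq:first=second-discounted} with $\sigma^2\lambda_1\lambda_2=2\beta$ and \eqref{eq:lim=h'-discounted2} yields
\[
\Psi(A,B)=\tfrac{1}{\beta}\Bigl(B+\lambda_2\!\int_a^{x_2}e^{\lambda_2(y-a)}h'(y)\,dy\Bigr)e^{-\lambda_2(x_2-a)}\;\longrightarrow\;\tfrac{1}{\beta}\lim_{x\to\infty}h'(x)\;>\;\ell
\]
by Assumption~\ref{assumption:h-discounted}(d). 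At the other endpoint, \eqref{eq:x2ABlimit1-discounted} forces $x_2\to a$ as $A\downarrow B\vee h'(a-)$, and a direct evaluation using \eqref{eq:g3-discounted} gives $\Psi\to B/\beta<\ell$ (with an extra boundary computation when $B\le h'(a-)$ driving $\Psi$ below $\ell$). IVT plus monotonicity delivers a unique $\underline{A}(B)$; implicit differentiation of $\Psi(\underline{A}(B),B)=\ell$ combined with the envelope formulas reproduces \eqref{eq:dunderlineA/dB-discounted}; and \eqref{eq:g(x2)>l-discounted} is simply strict monotonicity. Part (b) is the mirror image with $\Phi(A,B):=g_{A,B}(x_1(A,B))$, replacing $\ell$ by $-k$, $\overline{A}$ by the lower limit $\underline{B}$, and using \eqref{eq:x1ABlimit1-discounted}--\eqref{eq:x1ABlimit2-discounted} together with \eqref{eq:lim=h'-discounted3}.

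For part (c), the intersection point is a fixed point of $B\mapsto\overline{B}(\underline{A}(B))$, and the crucial observation is the slope identity
\[
\bigl(\overline{B}\circ\underline{A}\bigr)'(B)=\overline{B}'(\underline{A}(B))\cdot\underline{A}'(B)=e^{(\lambda_1+\lambda_2)(x_1-x_2)}<1,
\]
where the inequality uses $x_1<a<x_2$ from Lemma~\ref{lem:optimalParameter-discounted}. Hence $F(B):=\overline{B}(\underline{A}(B))-B$ is strictly decreasing. Since $\underline{A}(0)>0$ forces $\overline{B}(\underline{A}(0))<\overline{B}(0)<0$, we have $F(0)<0$; as $B\downarrow-\infty$, $\underline{A}(B)$ remains in $(0,\overline{A})$ and $\overline{B}(\underline{A}(B))>\underline{B}$, so $F(B)\to+\infty$. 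IVT and strict monotonicity then produce a unique $B^{\rm int}<0$ with $A^{\rm int}:=\underline{A}(B^{\rm int})$. The chain of inequalities \eqref{eq:Aint}--\eqref{eq:Bint} drops out of monotonicity: $B^{\rm int}<0$ gives $A^{\rm int}=\underline{A}(B^{\rm int})>\underline{A}(0)$, and $B^{\rm int}>\underline{B}$ (valid since $\overline{B}$ is bounded below by $\underline{B}$) gives $A^{\rm int}<\underline{A}(\underline{B})$; the $B$-inequalities follow symmetrically.

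The main obstacle I anticipate is the lower-endpoint limit in part~(a) (and its part-(b) analogue) when $B\le h'(a-)$: the characterization \eqref{eq:first=second-discounted} still provides a compact formula for $\Psi$, but one must separately verify that the limit remains strictly below $\ell$ as $A\downarrow h'(a-)$, and handle degenerate cases where $h$ is linear on one side of $a$. The other delicate point, in (c), is the limit $F(B)\to+\infty$, which relies on the uniform lower bound $\overline{B}(\underline{A}(B))>\underline{B}$ combined with the linear divergence of $-B$; everything else is essentially chain-rule and monotonicity bookkeeping.
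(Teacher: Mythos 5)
Your treatment of parts (a) and (b) is essentially the same as the paper's: envelope theorem gives the monotonicity of $A\mapsto g_{A,B}(x_2(A,B))$, the boundary limits are computed via \eqref{eq:first=second-discounted}, \eqref{eq:lim=h'-discounted2}, and the case split at $B=h'(a-)$, and IVT finishes. The derivative formulas follow by implicit differentiation. This is the paper's argument exactly, and your remark about the delicacy at the lower endpoint when $B\le h'(a-)$ matches the case distinction the paper makes.

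For part (c), you take a genuinely different route. The paper proves the existence and uniqueness of the intersection geometrically: it locates the four boundary points $(\underline{A}(0),0)$, $(\underline{A}(\underline{B}),\underline{B})$, $(0,\overline{B}(0))$, $(\overline{A},\overline{B}(\overline{A}))$, shows each endpoint of one curve lies strictly on one side of the other curve, and invokes continuity plus the monotonicity of both curves to produce a unique crossing. You instead reduce it to a one-dimensional fixed point: $F(B):=\overline{B}(\underline{A}(B))-B$ is strictly decreasing because the chain rule with \eqref{eq:dunderlineA/dB-discounted} and \eqref{eq:doverlineB/dA-discounted} gives $(\overline{B}\circ\underline{A})'(B)=e^{(\lambda_1+\lambda_2)(x_1-x_2)}\in(0,1)$ (note the two critical points are evaluated at the two different $B$-arguments along the composition, but that does not matter since $x_1<a<x_2$ uniformly), and the endpoint behavior of $F$ forces a unique zero. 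This slope bound is the cleanest way to see why the two monotone curves cross exactly once, and it makes the paper's curve-crossing picture quantitative; the paper never uses this quantitative slope comparison. One small point you state without comment is $\underline{A}(0)>0$, but this is immediate from part~(a) itself: for $B=0$ the interval is $(0\vee h'(a-),\overline{A})=(0,\overline{A})$ since $h'(a-)\le 0$, so $\underline{A}(0)>0$ costs nothing. (The paper re-derives this fact from $\lim_{A\downarrow 0}g_{A,0}(x_2(A,0))=0<\ell$, which is more work than necessary.) With that observation in hand, $\underline{A}(B)>\underline{A}(0)>0>\underline{B}\vee(-k\beta)$ for all $B\le 0$, so the composition is well defined on $(-\infty,0]$, $F(0)<0$, and $F(B)\ge\underline{B}-B\to+\infty$; your argument goes through. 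The chain of inequalities \eqref{eq:Aint}--\eqref{eq:Bint} then follows from strict monotonicity as you say.
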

\begin{proof}
(a) First, fix a $B$ that satisfies (\ref{eq:Brange}).
We consider the value of
$g_{A,B}(x_2(A,B))$ for $A\in (B\vee h'(a-), \overline{A})$.
\begin{eqnarray}
\frac{\partial g_{A,B}(x_2(A,B))}{\partial A}&=&g_{A,B}'(x_2(A,B))\frac{\partial x_2(A,B)}{\partial A}
+\frac{2}{\sigma^2}\frac{1}{\lambda_1+\lambda_2}\frac{1}{\lambda_1}e^{\lambda_1 (x_2(A,B)-a)}\nonumber\\
&=&\frac{2}{\sigma^2}\frac{1}{\lambda_1+\lambda_2}\frac{1}{\lambda_1}e^{\lambda_1 (x_2(A,B)-a)}\nonumber\\
&>&0.\label{eq:dg/dA-discounted}
\end{eqnarray}
Next we will prove that
\begin{eqnarray}
\label{eq:limg(x2)>l-discounted}
\lim_{A\uparrow \overline{A}}  g_{A,B}(x_2(A,B))>\ell
\end{eqnarray}
and
\begin{eqnarray}
\label{eq:limg(x2)<l-discounted}
\lim_{A\downarrow (B\vee h'(a-))}  g_{A,B}(x_2(A,B))<\ell,
\end{eqnarray}
from which one has that
there exists unique
$\underline{A}(B)\in (B\vee h'(a-),\overline{A})$ such that
\begin{eqnarray*}
 g_{\underline{A}(B),B}(x_2(\underline{A}(B),B))=\ell
\end{eqnarray*}
and for $A\in(\underline{A}(B),\overline{A})$
\begin{eqnarray*}
 g_{A,B}(x_2(A,B))>\ell.
\end{eqnarray*}
The derivative \eqref{eq:dunderlineA/dB-discounted} follows from the Implicit
Function Theorem, being applied  to \eqref{eq:underlineA-discounted}.

First we prove \eqref{eq:limg(x2)>l-discounted}.
Then \eqref{eq:x2ABlimit2-discounted} implies that
\begin{eqnarray}
&&\lim_{A\uparrow   \overline{A}}\bigl(B+\lambda_2\int_a^{x_2(A,B)} e^{\lambda_2 (y-a)}h'(y)dy\bigr) e^{-\lambda_2 (x_2(A,B)-a)}\nonumber\\
&&\quad=\lim_{A\uparrow \overline{A}}\lambda_2\int_a^{x_2(A,B)} e^{\lambda_2 (y-a)}h'(y)dy\cdot e^{-\lambda_2 (x_2(A,B)-a)}\nonumber\\
&&\quad=\lim_{x\uparrow \infty}\frac{\lambda_2\int_a^{x} e^{\lambda_2 (y-a)}h'(y)dy}{ e^{\lambda_2 (x-a)}}\nonumber\\
&&\quad=\lim_{x\uparrow \infty} h'(x),\label{eq:lim=h'-discounted}
\end{eqnarray}
where the last equlity follows from (\ref{eq:lim=h'-discounted2}).
Equalities \eqref{eq:first=second-discounted} and \eqref{eq:lim=h'-discounted} yield that
\begin{eqnarray*}
\lim_{A\uparrow   \overline{A}}\bigl(A-\lambda_1\int_a^{x_2(A,B)} e^{-\lambda_1 (y-a)}h'(y)dy\bigr)e^{\lambda_1 (x_2(A,B)-a)}
=\lim_{x\uparrow \infty} h'(x)
\end{eqnarray*}
Therefore, using the expression in \eqref{eq:g2-discounted} for $g$, we have
\begin{eqnarray*}
&&\lim_{A\uparrow \overline{A}}  g_{A,B}(x_2(A,B))\nonumber\\
&&\quad=
\frac{2}{\sigma^2}\frac{1}{\lambda_1+\lambda_2}\bigl[(\frac{1}{\lambda_1}+\frac{1}{\lambda_2})
\lim_{x\uparrow \infty}h'(x)\bigr]\nonumber\\
&&\quad =\frac{1}{\beta}\lim_{x\uparrow\infty}h'(x)\nonumber\\
&&\quad>\ell,
\end{eqnarray*}
where the second equality uses $\lambda_1\lambda_2=\frac{2\beta}{\sigma^2}$
and the last inequality is due to the first part of \eqref{eq:hlimit}.

It remains to prove \eqref{eq:limg(x2)<l-discounted}.
Next we consider two cases: $B\in (h'(a-),\overline{A}\wedge \ell
\beta)$ and $B\in (-\infty, h'(a-)]$.
If $B\in (h'(a-),\overline{A}\wedge \ell \beta)$,
$\lim_{A\downarrow B}x_2(A,B)=a$ in \eqref{eq:x2ABlimit1-discounted}
implies that
\begin{eqnarray}
\lim_{A\downarrow B}  g_{A_2,B_2}(x_2(A,B))&=&
\frac{2}{\sigma^2}\frac{1}{\lambda_1+\lambda_2}\bigl[(\frac{1}{\lambda_1}+\frac{1}{\lambda_2})(B-h'(a+))\bigr]+\frac{1}{\beta}h'(a+)\nonumber\\
&=&\frac{B}{\beta}.\label{eq:lim g(x1)-discounted}
\end{eqnarray}
Because $B<\ell \beta$, we have $\lim_{A\downarrow B}
g_{A,B}(x_2(A,B))<\ell$.

On the other hand, if $B\in (-\infty, h'(a-)]$,  \eqref{eq:first=second-discounted} implies that
\begin{eqnarray*}
&&\lim_{A\downarrow h'(a-)}\bigl(B+\lambda_2\int_a^{x_2(A,B)} e^{\lambda_2 (y-a)}h'(y)dy\bigr)e^{-\lambda_2 (x_2(A,B)-a)}\\
&&\quad=\lim_{A\downarrow h'(a-)}\bigl(A-\lambda_1\int_a^{x_2(A,B)} e^{-\lambda_1 (y-a)}h'(y)dy\bigr)e^{\lambda_1 (x_2(A,B)-a)}\\
&&\quad \leq 0,
\end{eqnarray*}
where the inequality is because $h'(a-)\leq 0$ by \eqref{eq:h'(a-)<h'(a+)-discounted}.
Using the expression in \eqref{eq:g2-discounted} for $g$, we have
\begin{eqnarray*}
\lim_{A\downarrow h'(a-)}g_{A,B}(x_2(A,B))\leq 0<\ell.
\end{eqnarray*}

(b) For a fixed $A$ that satisfies (\ref{eq:Arange}).
We can prove similarly that for $B\in (\underline{B}, A\wedge h'(a+))$
\begin{eqnarray}
&&\frac{\partial g_{A,B}(x_1(A,B))}{\partial B}=\frac{2}{\sigma^2}\frac{1}{\lambda_1+\lambda_2}\frac{1}{\lambda_2}
e^{-\lambda_2 (x_1(A,B)-a)}>0\label{eq:dg/dA2-discounted}
\end{eqnarray}
and
\begin{eqnarray}
\label{eq:limg(x1)-discounted}
\lim_{B\downarrow \underline{B}} g_{A,B}(x_1(A,B))
=\frac{1}{\beta}\lim_{x\downarrow -\infty}h'(x)<-k,
\end{eqnarray}
where the inequality is due to the second part of \eqref{eq:hlimit}.

If $A\in(\underline{B}\vee(-k\beta), h'(a+))$, we have
\begin{eqnarray}
\lim_{B\uparrow A}  g_{A,B}(x_1(A,B))=\frac{A}{\beta}.
\end{eqnarray}
Because $A>-k\beta$, we have $\lim_{B\uparrow A}  g_{A,B}(x_1(A,B))>-k$.
Then \eqref{eq:dg/dA2-discounted} and \eqref{eq:limg(x1)-discounted} imply that there exists a unique
$\overline{B}(A)\in (\underline{B},A)$ such that
\begin{eqnarray*}
 g_{A,\overline{B}(A)}(x_1(A,\overline{B}(A)))=-k
\end{eqnarray*}
and for $B\in(\underline{B},\overline{B}(A))$
\begin{eqnarray*}
 g_{A,B}(x_1(A,B))<-k.
\end{eqnarray*}

If $A\in [h'(a+),\infty)$, we have
\begin{eqnarray*}
\lim_{B\uparrow h'(a+)}g_{A,B}(x_1(A,B))\geq 0>-k.
\end{eqnarray*}
Then \eqref{eq:dg/dA2-discounted} implies that there exists a unique
$\overline{B}(A)\in (\underline{B},h'(a+))$ such that
\begin{eqnarray*}
 g_{A,\overline{B}(A)}(x_1(A,\overline{B}(A)))=-k
\end{eqnarray*}
and for $B\in(\underline{B},\overline{B}(A))$
\begin{eqnarray*}
 g_{A,B}(x_1(A,B))<-k.
\end{eqnarray*}
Applying the Implicit Function Theorem to
\eqref{eq:overlineB-discounted},
we also have \eqref{eq:doverlineB/dA-discounted}.

(c)
First consider the curve $\{(\underline{A}(B),B): B\in (-\infty,
\overline{A}\wedge \ell \beta)\}$ that is determined by
equation $g_{\underline{A}(B),B}(x_2(\underline{A}(B),B))=\ell$.
Consider two points
\begin{displaymath}
  (\underline{A}(0),0) \quad \text{ and } \quad
(\underline{A}(\underline{B}),\underline{B})
\end{displaymath}
on the curve  $\{(\underline{A}(B),B): B\in
(-\infty, \overline{A}\wedge \ell \beta)\}$ (see Figure~\ref{fig:dis3}).
By part (a) of this lemma, we have
\begin{eqnarray}
\label{eq:underlineA<overlineA-discounted}
  \underline{A}(\underline{B})<\overline{A}.
\end{eqnarray}
Next we show that
\begin{eqnarray}
\label{eq:underlineA(0)-discounted}
\underline{A}(0)>0.
\end{eqnarray}
To see this,
\eqref{eq:lim g(x1)-discounted} implies that
\begin{eqnarray*}
\lim_{A\downarrow 0} g_{A,0}(x_2(A,0))=0<\ell,
\end{eqnarray*}
from which and \eqref{eq:dg/dA-discounted}, one has  (\ref{eq:underlineA(0)-discounted}).

Similarly, consider two points
\begin{displaymath}
   (0,\overline{B}(0)) \quad \text{and}\quad
   (\overline{A},\overline{B}(\overline{A}))
\end{displaymath}
 on the curve determined by
 $g_{A, \overline{B}(A)}(x_1(A,\overline{B}(A)))=-k$.
 Similar to \eqref{eq:underlineA<overlineA-discounted} and
 \eqref{eq:underlineA(0)-discounted}, by part (b) of this lemma, we have
\begin{eqnarray}
\label{eq:overlineB(0)-discounted}
\underline{B}  < \overline{B}(\overline{A})<\overline{B}(0)<0.
\end{eqnarray}
Therefore, the point $   (\overline{A},\overline{B}(\overline{A}))$ is
on the right side of the curve
$g_{\underline{A}(B), B}(x_2(\underline{A}(B),B)=\ell$
and point $(0,\overline{B}(0))$ is on the left side of the curve. The
continuity and monotonicity of the two curves imply that there is a
unique point
\begin{displaymath}
  (A^{\rm int}, B^{\rm int})
\end{displaymath}
at which the two curves intersect. See  Figure~\ref{fig:dis3} for an
illustration. It is clear from Figure~\ref{fig:dis3} that
(\ref{eq:Aint}) and (\ref{eq:Bint}) hold.

\end{proof}

Let
\begin{equation}
  \label{eq:G}
  G =\{(A, B): \underline{A}(B)< A< \overline{A}, \quad \underline{B}< B <
  \overline{B}(A)\}
\end{equation}
be the shaded region in Figure \ref{fig:dis3}. The region $G$ has four
corners. They are $(A^{\rm int}, B^{\rm int})$, $(\overline{A},
\overline{B}(\overline{A}))$, $(\overline{A}, \underline{B})$ and
$(\underline{A}(\underline{B}), \underline{B})$. Its boundary has four
pieces: the top, the right, the bottom and the left.


For $(A, B)\in G$,
we have
\begin{equation}
  \label{eq:gx1x2}
g_{A,B}(x_1(A,B))<-k, \quad g_{A,B}(x_2(A,B))>\ell.
\end{equation}
It follows from part (b) of  Lemma \ref{lem:optimalParameter-discounted}
and (\ref{eq:gx1x2})  that
there exist unique $d(A,B)$, $D(A,B)$, $U(A,B)$ and $u(A,B)$ such that
\begin{eqnarray*}
&&d(A,B)<x_1(A,B)<D(A,B)<U(A,B)<x_2(A,B)<u(A,B),\\
&& g_{A,B}(d(A,B))=g_{A,B}(D(A,B))=-k,\\
&& g_{A,B}(U(A,B))=g_{A,B}(u(A,B))=\ell,\\
&& g_{A,B}'(d(A,B))<0,\quad g_{A,B}'(D(A,B))>0,\\
&&g_{A,B}'(U(A,B))>0,\quad g_{A,B}'(u(A,B))<0.
\end{eqnarray*}
For each $(A, B)\in G$, define
\begin{displaymath}
\Lambda_1(A,B)=\int_{d(A,B)}^{D(A,B)} \bigl[g_{A,B}(x)+k\bigr]dx,
\quad \text{and} \quad
\Lambda_2(A,B)=\int_{U(A,B)}^{u(A,B)} \bigl[g_{A,B}(x)-\ell\bigr]dx.
\end{displaymath}
Although $(A, \overline{B}(A))$ is not in $G$ for $A\in (A^{\rm int},
\overline{A})$, these points are on the upper boundary of $G$, and
\begin{displaymath}
  \Lambda_2(A, \overline{B}(A))
\end{displaymath}
is also well defined for
 $A\in (A^{\rm int},\overline{A})$.

\begin{figure}[tb]
  \centering
  \includegraphics[width=14cm]{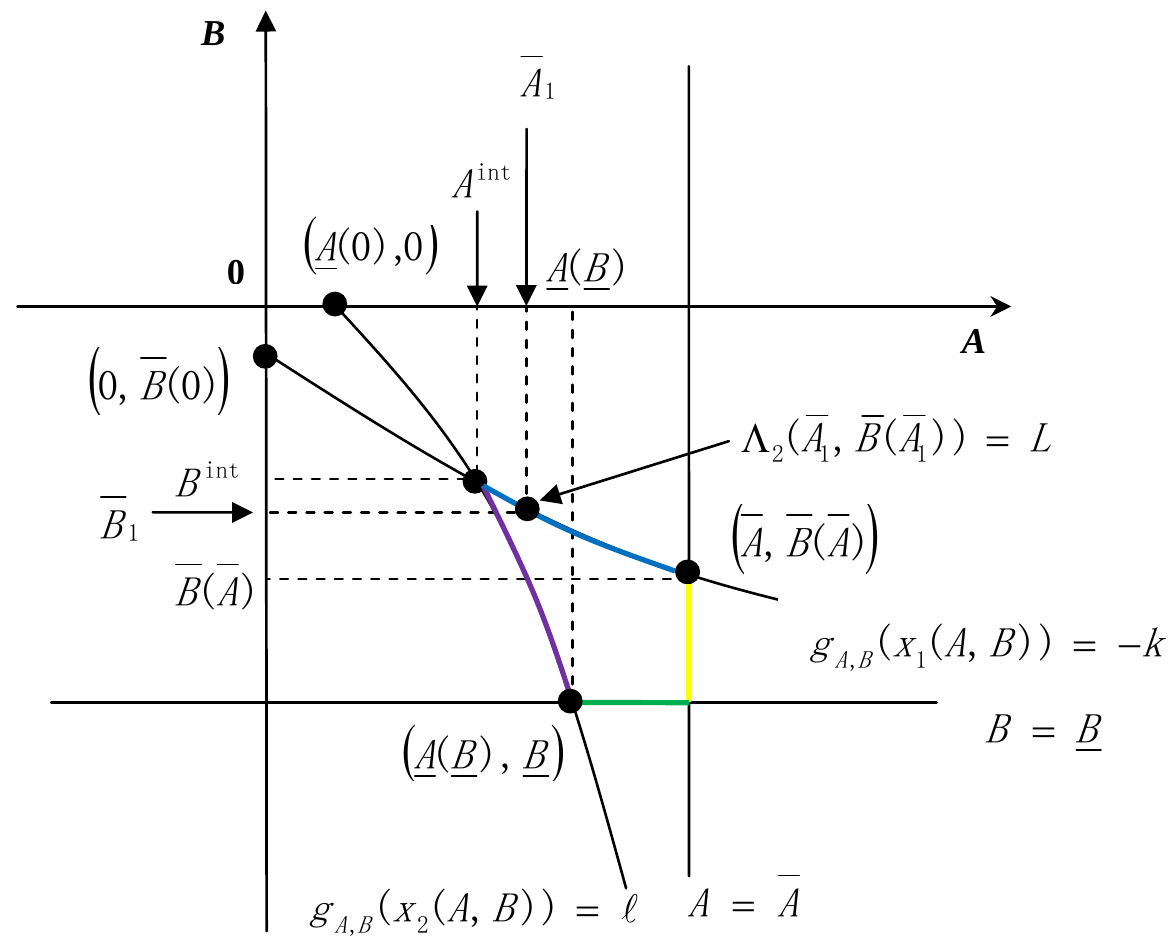}
\caption{The point $(\overline{A}_1,\overline{B}_1)$ is the unique point on the
top boundary of $G$ such that $\Lambda_2(\overline{A}_1,\overline{B}_1)=L$.
For any point $(A,B)$ on the top boundary and to the right of $(\overline{A}_1,\overline{B}_1)$,
$\Lambda_2(A,B)>L$.}
\label{fig:dis4}
\end{figure}

\begin{lemma}
\label{lem:A^*(B)-discounted}
There exists a unique $A=\overline{A}_1\in (A^{\rm int}, \overline{A})$
such that
\begin{eqnarray}
\label{eq:Lambda2=L-discounted}
\Lambda_2(\overline{A}_1, \overline{B}(\overline{A}_1))=L,
\end{eqnarray}
and for $A\in(\overline{A}_1,\overline{A})$,
\begin{eqnarray}
\label{eq:Lambda2>L-discounted}
\Lambda_2(A, \overline{B}(A))>L.
\end{eqnarray}
\end{lemma}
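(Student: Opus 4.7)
The plan is to parametrize the top boundary of $G$ by $A$ via $B=\overline{B}(A)$ for $A\in[A^{\rm int},\overline{A})$, and to study the single-variable function
\[
\Phi(A):=\Lambda_2(A,\overline{B}(A))=\int_{U(A,\overline{B}(A))}^{u(A,\overline{B}(A))}\bigl[g_{A,\overline{B}(A)}(x)-\ell\bigr]\,dx.
\]
I will establish (i) $\Phi(A^{\rm int})=0$; (ii) $\Phi$ is continuous and strictly increasing on $[A^{\rm int},\overline{A})$; (iii) $\lim_{A\uparrow\overline{A}}\Phi(A)=+\infty$. The intermediate value theorem then yields a unique $\overline{A}_1\in(A^{\rm int},\overline{A})$ with $\Phi(\overline{A}_1)=L$, and strict monotonicity delivers $\Phi(A)>L$ for $A\in(\overline{A}_1,\overline{A})$.

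For (i), at the intersection point $\underline{A}(B^{\rm int})=A^{\rm int}$ by \eqref{eq:ABintfix}, so $g_{A^{\rm int},B^{\rm int}}(x_2)=\ell$; since $x_2$ is the strict maximum of $g$ to the right of $a$ (Lemma \ref{lem:optimalParameter-discounted}), the set $\{g\ge\ell\}\cap(a,\infty)$ collapses to $\{x_2\}$, so $U=u=x_2$ and $\Phi(A^{\rm int})=0$. Continuity in (ii) follows from the Implicit Function Theorem applied to the defining equations $g(U)=\ell$ and $g(u)=\ell$, together with the smoothness of $\overline{B}$. For strict monotonicity I apply the envelope theorem: because $g-\ell$ vanishes at both endpoints of integration,
\[
\Phi'(A)=\int_U^u\Bigl[\frac{\partial g_{A,B}}{\partial A}(x)+\frac{\partial g_{A,B}}{\partial B}(x)\,\overline{B}'(A)\Bigr]dx.
\]
Reading off $\partial g/\partial A=\frac{2}{\sigma^2(\lambda_1+\lambda_2)\lambda_1}e^{\lambda_1(x-a)}$ and $\partial g/\partial B=\frac{2}{\sigma^2(\lambda_1+\lambda_2)\lambda_2}e^{-\lambda_2(x-a)}$ from \eqref{eq:g2-discounted}, and using $\overline{B}'(A)=-(\lambda_2/\lambda_1)e^{(\lambda_1+\lambda_2)(x_1-a)}$ from \eqref{eq:doverlineB/dA-discounted}, the integrand reduces to a positive multiple of
\[
e^{\lambda_1(x-a)}-e^{\lambda_1(x_1-a)-\lambda_2(x-x_1)},
\]
which is strictly positive for $x>x_1$ since the exponent difference equals $(\lambda_1+\lambda_2)(x-x_1)>0$. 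Because $U>x_1$, this gives $\Phi'(A)>0$.

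The main obstacle is (iii). By \eqref{eq:x2ABlimit2-discounted}, $x_2(A,\overline{B}(A))\uparrow\infty$ as $A\uparrow\overline{A}$ (noting that $\overline{B}(A)$ remains bounded in $(\underline{B},0)$ by \eqref{eq:overlineB(0)-discounted}); and the calculation that produced \eqref{eq:limg(x2)>l-discounted} yields $g_{A,\overline{B}(A)}(x_2)\to\ell^*:=(1/\beta)\lim_{x\to\infty}h'(x)>\ell$ by \eqref{eq:hlimit}. To show the peak flattens so that the superlevel set $\{g\ge\ell\}$ expands, I translate by $x_2$ and set $\tilde g_A(y):=g_{A,\overline{B}(A)}(x_2+y)$. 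By \eqref{eq:gPoisson-discounted}, $\tilde g_A$ satisfies
\[
\tfrac{1}{2}\sigma^2\tilde g_A''(y)+\mu\tilde g_A'(y)-\beta\tilde g_A(y)+h'(x_2+y)=0,\qquad \tilde g_A(0)\to\ell^*,\ \tilde g_A'(0)=0,
\]
and $h'(x_2+y)\to\beta\ell^*$ uniformly on compact sets (since $h'$ is monotone and bounded by $\beta\ell^*$ for $x>a$). Continuous dependence of ODE solutions on data then forces $\tilde g_A$ to converge locally uniformly to the unique solution of the limiting equation with those data, namely the constant $G\equiv\ell^*$. Consequently, for any fixed $M>0$ and $A$ close enough to $\overline{A}$, $g(x)\ge(\ell+\ell^*)/2>\ell$ on $[x_2,x_2+M]$, which forces $u\ge x_2+M$ and $\Phi(A)\ge M(\ell^*-\ell)/2$. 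Letting $M\to\infty$ gives $\Phi(A)\to\infty$ and completes the argument.
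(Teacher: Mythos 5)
Your steps (i) and (ii) match the paper's proof in substance: the degeneration of $[U,u]$ to $\{x_2\}$ at the intersection point gives $\Phi(A^{\rm int})=0$, and the derivative computation (using $g(U)=g(u)=\ell$ to kill the boundary terms, plus \eqref{eq:doverlineB/dA-discounted}) gives $\Phi'(A)>0$. These are fine.

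Step (iii) is where your argument differs from the paper's and where there is a genuine gap. Your ODE-convergence argument hinges on $\ell^*:=\tfrac{1}{\beta}\lim_{x\to\infty}h'(x)$ being finite: you need $h'(x_2+y)\to\beta\ell^*$ uniformly on compacta so that the limiting equation has the constant solution $\ell^*$, and your final lower bound $M(\ell^*-\ell)/2$ is meaningless when $\ell^*=\infty$. But Assumption \ref{assumption:h-discounted} only requires $\lim_{x\to\infty}h'(x)>\ell\beta$; it does \emph{not} bound $h'$, and the motivating quadratic case \eqref{eq:quadratic} has $h'(x)\to\infty$. So a large and important class of admissible holding costs (indeed the one Baccarin studied) is not covered by your argument. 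The paper avoids this entirely: it fixes an arbitrary $A'\in(A^{\rm int},\overline A)$, sets $M_1:=\bigl(g_{A',\overline B(A')}(x_2)-\ell\bigr)/2>0$, and, using the monotonicity of $A\mapsto g_{A,\overline B(A)}(x_2)$ along the boundary (the computation \eqref{eq:dg(x2)/dA-discounted}), concludes $g(x_2)>\ell+M_1$ for all $A>A'$. It then bounds $\Lambda_2\ge M_1\bigl(u_1-U_1\bigr)$ via the sublevel $\ell+M_1$ and shows $u_1\ge x_2\to\infty$ while $U_1$ is decreasing in $A$ (hence bounded above). This uses only a fixed, finite threshold and works whether or not $g(x_2)$ diverges.

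A secondary, smaller issue: "continuous dependence of ODE solutions on data" deserves a word of justification here because the forcing $h'(x_2(A)+y)$ depends on $A$ both through the translation and through its argument growing; a clean statement would invoke Dini (monotone $h'$, constant limit) for the uniform convergence of the forcing and then standard Gronwall-type continuous dependence for the linear second-order ODE on a fixed compact interval. That is repairable, but the $\ell^*=\infty$ case is not repairable within your framework without essentially switching to the paper's fixed-threshold argument.
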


\begin{proof}

When $A$ goes to $A^{\rm int}$, $(A,\overline{B}(A))$ goes to $(A^{\rm int},B^{\rm int})$.
Then the definition of $(A^{\rm int},B^{\rm int})$ in Lemma \ref{lem:underlineAoverlineB-discounted} implies that
\begin{eqnarray*}
\lim_{A\downarrow A^{\rm int}}U(A,\overline{B}(A))=\lim_{A\downarrow A^{\rm int}}u(A,\overline{B}(A))=x_2(A^{\rm int},B^{\rm int}).
\end{eqnarray*}
Therefore,
\begin{equation}
  \label{eq:Aintlimit0}
\lim_{A\downarrow A^{\rm int}}\Lambda_2(A,\overline{B}(A))=0.
\end{equation}

Fix $A\in(A^{\rm int},\overline{A})$. One has
\begin{eqnarray}
  &&\frac{\partial \Lambda_2(A,\overline{B}(A))}{\partial A}\nonumber\\
  &&\quad=\frac{\partial u(A,\overline{B}(A))}{\partial A}\bigl[g_{A,\overline{B}(A)}(u(A,\overline{B}(A)))-\ell\bigr]\nonumber\\
  &&\quad\quad-\frac{\partial U(A,\overline{B}(A))}{\partial A}\bigl[g_{A,\overline{B}(A)}(U(A,\overline{B}(A)))-\ell\bigr]
  +\int_{U(A,\overline{B}(A))}^{u(A,\overline{B}(A))}\frac{\partial g_{A,\overline{B}(A)}(x)}{\partial A}dx\nonumber\\
  &&\quad=\int_{U(A,\overline{B}(A))}^{u(A,\overline{B}(A))}\frac{\partial g_{A,\overline{B}(A)}(x)}{\partial A}dx\nonumber\\
  &&\quad=\frac{2}{\sigma^2}\frac{1}{\lambda_1+\lambda_2}\int_{U(A,\overline{B}(A))}^{u(A,\overline{B}(A))}\bigl[\frac{1}{\lambda_1}e^{\lambda_1 (x-a)}+\frac{1}{\lambda_2}\frac{d \overline{B}(A)}{d A}e^{-\lambda_2 (x-a)}\bigr]dx\nonumber\\
  &&\quad=\frac{2}{\sigma^2}\frac{1}{\lambda_1+\lambda_2}\int_{U(A,\overline{B}(A))}^{u(A,\overline{B}(A))}\bigl[\frac{1}{\lambda_1}e^{\lambda_1 (x-a)} -\frac{1}{\lambda_1}e^{(\lambda_1+\lambda_2)(x_1(A,\overline{B}(A))-a)}e^{-\lambda_2 (x-a)}\bigr]dx\nonumber\\
  &&\quad >0,\label{eq:dLambda2/dA-discounted}
\end{eqnarray}
where the second equality is due to $g_{A,\overline{B}(A)}(U(A,\overline{B}(A)))=g_{A,\overline{B}(A)}(u(A,\overline{B}(A)))=\ell$,
the forth equality is from \eqref{eq:doverlineB/dA-discounted},
and the inequality is due to
$u(A,\overline{B}(A))>U(A,\overline{B}(A))>x_1(A,\overline{B}(A))$. Therefore
$\Lambda_2(A,\overline{B}(A))$ is increasing in $A\in (A^{\rm int},
\overline{A})$.

We will show next that
\begin{eqnarray}
\label{eq:limLambda2-discounted}
\lim_{A\uparrow \overline{A}}\Lambda_2(A,\overline{B}(A))=\infty.
\end{eqnarray}
It follows from (\ref{eq:Aintlimit0}),
(\ref{eq:limLambda2-discounted}) and the monotonicity of
$\Lambda_2(A,\overline{B}(A))$ that
there exists unique $\overline{A}_1 \in(A^{\rm int},\overline{A})$
such that
(\ref{eq:Lambda2=L-discounted}) and \eqref{eq:Lambda2>L-discounted} hold.

To prove (\ref{eq:limLambda2-discounted}), note that \eqref{eq:doverlineB/dA-discounted} implies that
\begin{eqnarray}
&&\frac{\partial g_{A,\overline{B}(A)}(x_2(A,\overline{B}(A)))}{\partial A}\nonumber\\
&&\quad=g_{A,\overline{B}(A)}'(x_2(A,\overline{B}(A)))\frac{\partial x_2(A,\overline{B}(A))}{\partial A}\nonumber\\
&&\quad\quad+\frac{2}{\sigma^2}\frac{1}{\lambda_1+\lambda_2}\bigl[\frac{1}{\lambda_1}e^{\lambda_1 (x_2(A,\overline{B}(A))-a)}
+\frac{1}{\lambda_2}\frac{d \overline{B}(A)}{d A}e^{-\lambda_2 (x_2(A,\overline{B}(A))-a)}\bigr]\nonumber\\
&&\quad=\frac{2}{\sigma^2}\frac{1}{\lambda_1+\lambda_2}\bigl[\frac{1}{\lambda_1}e^{\lambda_1 (x_2(A,\overline{B}(A))-a)} -\frac{1}{\lambda_1}e^{(\lambda_1+\lambda_2)(x_1(A,\overline{B}(A))-a)}e^{-\lambda_2 (x_2(A,\overline{B}(A))-a)}\bigr]\nonumber\\
&&\quad>0,\label{eq:dg(x2)/dA-discounted}
\end{eqnarray}
where the second equality is due to $g_{A,\overline{B}(A)}'(x_2(A,\overline{B}(A)))=0$,
and the inequality is due to $x_2(A,\overline{B}(A))>x_1(A,\overline{B}(A))$.

For $A\in(A^{\rm int},\overline{A})$,
$(A, \overline{B}(A))$ on the right side of the curve
$g_{A,B}(x_2(A,B))=\ell$ and therefore
\begin{eqnarray}
\label{eq:g>l-disocunted}
g_{A,\overline{B}(A)}(x_2(A,\overline{B}(A)))>\ell.
\end{eqnarray}
Fix an $A'\in(A^{\rm int},\overline{A})$ and let
\begin{eqnarray*}
M_1=\Big(g_{A',\overline{B}(A')}(x_2(A',\overline{B}(A')))-\ell\Big)/2.
\end{eqnarray*}
It follows from \eqref{eq:g>l-disocunted} that $M_1>0$.
Then \eqref{eq:dg(x2)/dA-discounted} implies that for each $A\in(A',\overline{A})$,
\begin{eqnarray*}
g_{A,\overline{B}(A)}(x_2(A,\overline{B}(A)))
>g_{A',\overline{B}(A')}(x_2(A',\overline{B}(A')))
=\ell+2M_1>\ell+M_1.
\end{eqnarray*}
Therefore, for each $A\in(A',\overline{A})$,
there exist unique $U_1(A,\overline{B}(A))$ and $u_1(A,\overline{B}(A))$ such that
\begin{eqnarray}
&&U_1(A,\overline{B}(A))<x_2(A,\overline{B}(A))<u_1(A,\overline{B}(A))\nonumber\\
&& g_{A,\overline{B}(A)}(U_1(A,\overline{B}(A)))= g_{A,\overline{B}(A)}(u_1(A,\overline{B}(A)))=\ell+M_1,\label{eq:g=l+M-discounted}\\
&& g_{A,\overline{B}(A)}'(U_1(A,\overline{B}(A)))>0,\  g_{A,\overline{B}(A)}'(u_1(A,\overline{B}(A)))<0.\nonumber
\end{eqnarray}
The properties of $g_{A, B}$ in Lemma \ref{lem:optimalParameter-discounted} imply that
for $A\in(A',\overline{A})$,
\begin{eqnarray*}
U(A,\overline{B}(A))<U_1(A,\overline{B}(A))<x_2(A,\overline{B}(A))<u_1(A,\overline{B}(A))<u(A,\overline{B}(A)).
\end{eqnarray*}
This implies that
\begin{eqnarray}
\lim_{A\uparrow \overline{A}} u_1(A,\overline{B}(A))
&\geq&\lim_{A\uparrow \overline{A}} x_2(A,\overline{B}(A))\nonumber\\
&\geq&\lim_{A\uparrow \overline{A}} x_2(A,B^{\rm int})\nonumber\\
&=&\infty,\label{eq:limu1-discounted}
\end{eqnarray}
where the second inequality holds because \eqref{eq:dx2/dB-discounted} and the equality is due to \eqref{eq:x2ABlimit2-discounted}.
Therefore, for $A\in(A',\overline{A})$,
\begin{eqnarray*}
\Lambda_2(A,\overline{B}(A))
&=&\int_{U(A,\overline{B}(A))}^{u(A,\overline{B}(A))}\bigl[g_{A,\overline{B}(A)}(x)-\ell\bigr]dx\\
&\geq& \int_{U_1(A,\overline{B}(A))}^{u_1(A,\overline{B}(A))}\bigl[g_{A,\overline{B}(A)}(x)-\ell\bigr]dx\\
&\geq&M_1(u_1(A,\overline{B}(A))-U_1(A,\overline{B}(A))).
\end{eqnarray*}
Applying the Implicit Function Theorem to $g_{A,\overline{B}(A)}(U_1(A,\overline{B}(A)))=\ell+M_1$, we have
that
\begin{eqnarray*}
&&\frac{\partial U_1(A,\overline{B}(A))}{\partial A}\\
&&\quad=-\frac{\frac{2}{\sigma^2}\frac{1}{\lambda_1+\lambda_2}\Big[\frac{1}{\lambda_1}e^{\lambda_1 (U_1(A,\overline{B}(A))-a)}
+\frac{1}{\lambda_2}\frac{d \overline{B}(A)}{d A}e^{-\lambda_2 (U_1(A,\overline{B}(A))-a)}\Big]}{g_{A,\overline{B}(A)}'(U_1(A,\overline{B}(A)))}\\
&&\quad=-\frac{\frac{2}{\sigma^2}\frac{1}{\lambda_1+\lambda_2}\Big[\frac{1}{\lambda_1}e^{\lambda_1 (U_1(A,\overline{B}(A))-a)}
-\frac{1}{\lambda_1}e^{(\lambda_1+\lambda_2)(x_1(A,\overline{B}(A))-a)}e^{-\lambda_2 (U_1(A,\overline{B}(A))-a)}\Big]}{g_{A,\overline{B}(A)}'(U_1(A,\overline{B}(A)))}\\
&&\quad<0,
\end{eqnarray*}
where the second equality is due to \eqref{eq:doverlineB/dA-discounted}, and
the inequality is due to $U_1(A,\overline{B}(A))>x_1(A,\overline{B}(A))$ and
$g_{A,\overline{B}(A)}'(U_1(A,\overline{B}(A)))>0$.
Thus, for any  $A\in(A',\overline{A})$,
\begin{eqnarray*}
U_1(A,\overline{B}(A))\leq U_1(A',\overline{B}(A')).
\end{eqnarray*}
Therefore, for any  $A\in(A',\overline{A})$,
\begin{eqnarray*}
\Lambda_2(A,\overline{B}(A))\geq M_1(u_1(A,\overline{B}(A))-U_1(A',\overline{B}(A'))),
\end{eqnarray*}
which, together with \eqref{eq:limu1-discounted}, implies
(\ref{eq:limLambda2-discounted}).
\end{proof}

Define
\begin{eqnarray}
\label{eq:defoverlineB1-discounted}
\overline{B}_1=\overline{B}(\overline{A}_1).
\end{eqnarray}
It follows from
\eqref{eq:doverlineB/dA-discounted}  that
\begin{eqnarray*}
\underline{B} < \overline{B}(\overline{A}) <
\overline{B}_1 < B^{\rm int}<0.
\end{eqnarray*}
where the last inequality is due to \eqref{eq:Bint}. See
Figure~\ref{fig:dis4} for the point $(\overline{A}_1,\overline{B}_1)$.

\begin{lemma}
\label{lem:A^*(B)-1discounted}
(a) For $B\in(\underline{B},\overline{B}_1]$,
there exists unique $A^*(B)\in[\overline{A}_1,\overline{A})$ such that
\begin{eqnarray}
\label{eq:Lambda2*=L-discounted}
\Lambda_2(A^*(B), B)=L.
\end{eqnarray}

(b)  For $B\in(\underline{B},\overline{B}_1]$,
\begin{eqnarray}
\label{eq:dA^*(B)/dB-discounted}
\frac{dA^*(B)}{d B}
=\frac{\lambda_1^2(e^{-\lambda_2 (u(A^*(B),B)-a)}-e^{-\lambda_2
    (U(A^*(B),B)-a)})}{\lambda_2^2(e^{\lambda_1 (u(A^*(B),B)-a)}-e^{\lambda_1
    (U(A^*(B),B)-a)})}
 <0.
\end{eqnarray}
\end{lemma}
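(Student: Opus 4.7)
The plan is to mimic the proof of Lemma~\ref{lem:A^*(B)-discounted} but with $B$ held fixed while $A$ varies, rather than constrained to lie on the curve $B=\overline{B}(A)$. First, I would differentiate under the integral in $\Lambda_2(A,B)=\int_{U(A,B)}^{u(A,B)}[g_{A,B}(x)-\ell]\,dx$. The boundary terms vanish because $g_{A,B}(U(A,B))=g_{A,B}(u(A,B))=\ell$, and the explicit form \eqref{eq:g2-discounted} of $g$ then yields
\begin{align*}
\frac{\partial \Lambda_2}{\partial A}&=\frac{2}{\sigma^{2}(\lambda_1+\lambda_2)\lambda_1^{2}}\bigl(e^{\lambda_1(u(A,B)-a)}-e^{\lambda_1(U(A,B)-a)}\bigr)>0,\\
\frac{\partial \Lambda_2}{\partial B}&=\frac{2}{\sigma^{2}(\lambda_1+\lambda_2)\lambda_2^{2}}\bigl(e^{-\lambda_2(U(A,B)-a)}-e^{-\lambda_2(u(A,B)-a)}\bigr)>0,
\end{align*}
so $\Lambda_2$ is strictly increasing in each variable on the region $\{A>\underline{A}(B)\}$.

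Next I would check the limits of $A\mapsto\Lambda_2(A,B)$ at the endpoints of $(\underline{A}(B),\overline{A})$. As $A\downarrow \underline{A}(B)$, $g_{A,B}(x_2(A,B))\downarrow\ell$ by the definition of $\underline{A}(B)$, so both $U(A,B)$ and $u(A,B)$ tend to $x_2(\underline{A}(B),B)$ and $\Lambda_2(A,B)\to 0$. For the limit as $A\uparrow\overline{A}$, I would rerun the construction used to establish \eqref{eq:limLambda2-discounted}: pick $A'\in(\underline{A}(B),\overline{A})$, set $M_1=\tfrac12\bigl(g_{A',B}(x_2(A',B))-\ell\bigr)>0$, and observe that $\partial g_{A,B}(x_2(A,B))/\partial A>0$ (the computation is strictly simpler than in Lemma~\ref{lem:A^*(B)-discounted} because the $d\overline{B}/dA$ term is absent). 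Defining $U_1(A,B)<x_2(A,B)<u_1(A,B)$ as the two solutions of $g_{A,B}(\cdot)=\ell+M_1$ for $A\in(A',\overline{A})$, the same implicit function argument gives $\partial U_1/\partial A<0$, so $U_1(A,B)\le U_1(A',B)$, while $u_1(A,B)\ge x_2(A,B)\to\infty$ by \eqref{eq:x2ABlimit2-discounted}. Hence $\Lambda_2(A,B)\ge M_1\bigl(u_1(A,B)-U_1(A',B)\bigr)\to\infty$.

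Combining the two limits with the strict monotonicity in $A$ gives a unique $A^{*}(B)\in(\underline{A}(B),\overline{A})$ solving $\Lambda_2(A^{*}(B),B)=L$. To place $A^{*}(B)$ in $[\overline{A}_1,\overline{A})$, I would extend $\Lambda_2(\cdot,B)$ by $0$ on $(-\infty,\underline{A}(B)]$; with this convention $\Lambda_2$ becomes continuous and nondecreasing in each argument everywhere, and strictly increasing in $A$ on $(\underline{A}(B),\overline{A})$. The defining identity $\Lambda_2(\overline{A}_1,\overline{B}_1)=L$ in \eqref{eq:Lambda2=L-discounted} then yields $\Lambda_2(\overline{A}_1,B)\le L$ for every $B\le\overline{B}_1$, with equality only at $B=\overline{B}_1$, so the unique solution $A^{*}(B)$ of $\Lambda_2(A,B)=L$ satisfies $A^{*}(B)\ge\overline{A}_1$, again with equality exactly when $B=\overline{B}_1$.

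Finally, for part (b), $A^{*}(B)>\underline{A}(B)$ since $L>0$, so $\Lambda_2$ is smooth at $(A^{*}(B),B)$; applying the implicit function theorem to $\Lambda_2(A^{*}(B),B)=L$ gives
\[
\frac{dA^{*}(B)}{dB}=-\frac{\partial\Lambda_2/\partial B}{\partial\Lambda_2/\partial A}\bigg|_{A=A^{*}(B)},
\]
and substituting the two displayed formulas reproduces \eqref{eq:dA^*(B)/dB-discounted} after simplification; its sign is immediate from $u(A^{*}(B),B)>U(A^{*}(B),B)$, which makes the numerator of the displayed ratio negative and the denominator positive. The only genuinely technical step is the divergence $\Lambda_2(A,B)\to\infty$ as $A\uparrow\overline{A}$; everything else is routine differentiation together with the two-variable monotonicity argument.
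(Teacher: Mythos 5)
Your proof is correct and follows the same skeleton as the paper's (monotonicity of $\Lambda_2$ in $A$, the two endpoint limits, implicit function theorem for part (b)), but you streamline two things. First, the paper splits the existence argument into two cases — for $B\in(\underline{B},\overline{B}(\overline{A})]$ it drives $A\uparrow\overline{A}$ and shows $\Lambda_2\to\infty$, while for $B\in(\overline{B}(\overline{A}),\overline{B}_1)$ it instead stops at the point $(A(B),B)$ on the top boundary of $G$ and shows $\Lambda_2>L$ there. You avoid the split by noting that $\Lambda_2(A,B)$ remains well defined for all $A\in(\underline{A}(B),\overline{A})$ even after leaving $G$ (it only requires $g_{A,B}(x_2(A,B))>\ell$, which is exactly $A>\underline{A}(B)$), and that $\lim_{A\uparrow\overline{A}}x_2(A,B)=\infty$ holds for each fixed $B$ directly from \eqref{eq:x2ABlimit2-discounted}, so the $\Lambda_2\to\infty$ argument goes through uniformly. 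That is a genuine simplification over the paper. Second, for the placement $A^*(B)\ge\overline{A}_1$, the paper invokes a monotonicity of $A^*(B)$ (citing what is presumably a typo for \eqref{eq:dA^*(B)/dB-discounted}, i.e.\ effectively leaning on part (b)), whereas you use the monotonicity of $\Lambda_2$ in $B$ together with $\Lambda_2(\overline{A}_1,\overline{B}_1)=L$ and the zero-extension convention, which keeps the logic self-contained within part (a). Both of your modifications are sound; your computation of $\partial\Lambda_2/\partial B$ and the resulting IFT formula in part (b) match \eqref{eq:dA^*(B)/dB-discounted} exactly, and the sign claim is immediate from $U<u$. The only thing worth flagging is that the zero-extension of $\Lambda_2$ needs a one-line justification that it is continuous (this follows from $\Lambda_2(A,B)\to 0$ as $A\downarrow\underline{A}(B)$, which you have), since otherwise "nondecreasing" across the boundary $A=\underline{A}(B)$ is not automatic; you state this, so no gap.
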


\begin{proof}
(a) For $B=\overline{B}_1$, Lemma \ref{lem:A^*(B)-discounted} has showed that
\begin{eqnarray}
\label{eq:A^*=A1-discounted}
A^*(\overline{B}_1)=\overline{A}_1.
\end{eqnarray}
For $B\in(\underline{B},\overline{B}_1)$ and $(A, B)\in G$, we first have
\begin{eqnarray}
\frac{\partial \Lambda_2(A,B)}{\partial A}
&=&\int_{U(A,B)}^{u(A,B)}\frac{\partial g_{A,B}(x)}{\partial A}dx\nonumber\\
&=&\int_{U(A,B)}^{u(A,B)}\frac{2}{\sigma^2}\frac{1}{\lambda_1+\lambda_2}\frac{1}{\lambda_1}e^{\lambda_1 (x-a)}dx\nonumber\\
&>&0.\label{eq:dLambda2/dA-discounted1}
\end{eqnarray}
From the definition of $\underline{A}(B)$ in \eqref{eq:underlineA-discounted},
we have
\begin{eqnarray*}
\lim_{A\downarrow \underline{A}(B)} U(A,B)=\lim_{A\downarrow \underline{A}(B)} u(A,B)=
\lim_{A\downarrow \underline{A}(B)} x_2(A,B)=x_2(\underline{A}(B),B).
\end{eqnarray*}
Therefore, for a fixed $B\in(\underline{B},\overline{B}_1)$,
\begin{eqnarray}
\label{eq:Lambda2lessL}
\lim_{A\downarrow \underline{A}(B)}\Lambda_2(A,B)=0<L.
\end{eqnarray}
Next for $B\in(\underline{B},\overline{B}_1)$, we consider two cases depending on whether
$B\in (\underline{B},\overline{B}(\overline{A})]$ or $B\in ( \overline{B}(\overline{A}), \overline{B}_1)$.
See Figure \ref{fig:dis5} for an illustration.

We first assume that $B\in ( \overline{B}(\overline{A}), \overline{B}_1)$.
For a fixed $B\in ( \overline{B}(\overline{A}), \overline{B}_1)$,
by the monotonicity of $\overline{B}(\cdot)$ in \eqref{eq:doverlineB/dA-discounted},
there exists an $A(B)\in (\overline{A}_1, \overline{A})$ such that
$(A(B), B)$ is on the upper boundary of $G$.
It follows from  \eqref{eq:dLambda2/dA-discounted} and the definition of $\overline{A}_1$ in Lemma
\ref{lem:A^*(B)-discounted} that
\begin{equation*}
 \Lambda_2(A(B), B)=\Lambda_2(A(B), \overline{B}(A(B)))>\Lambda_2(\overline{A}_1, \overline{B}(\overline{A}_1))= L,
\end{equation*}
which, together with \eqref{eq:dLambda2/dA-discounted1} and \eqref{eq:Lambda2lessL},
implies that there exists a unique
\begin{displaymath}
A^*(B)\in(\underline{A}(B),A(B))
\end{displaymath}
 such that
\eqref{eq:Lambda2*=L-discounted} holds.

Now assume that $B\in (\underline{B},\overline{B}(\overline{A})]$.
Following the proof for \eqref{eq:limLambda2-discounted}, one can
prove similarly that
\begin{eqnarray*}
\lim_{A\uparrow \overline{A}}\Lambda_2(A,B)=\infty,
\end{eqnarray*}
which, together with \eqref{eq:dLambda2/dA-discounted1} and
\eqref{eq:Lambda2lessL}, implies that
there exists a unique $A^{*}(B)\in (\underline{A}(B), \overline{A})$ such that
\eqref{eq:Lambda2=L-discounted} holds.
By (\ref{eq:dunderlineA/dB-discounted}) and
\eqref{eq:A^*=A1-discounted}, we have
for $B\in(\underline{B},\overline{B}_1)$,
\begin{eqnarray*}
A^*(B)>\overline{A}_1.
\end{eqnarray*}

(b) Applying the Implicit Function Theorem to $\Lambda_2(A^*(B), B)=L$,
we have  \eqref{eq:dA^*(B)/dB-discounted}.

\end{proof}

\begin{figure}[t]
  \centering
  \includegraphics[width=14cm]{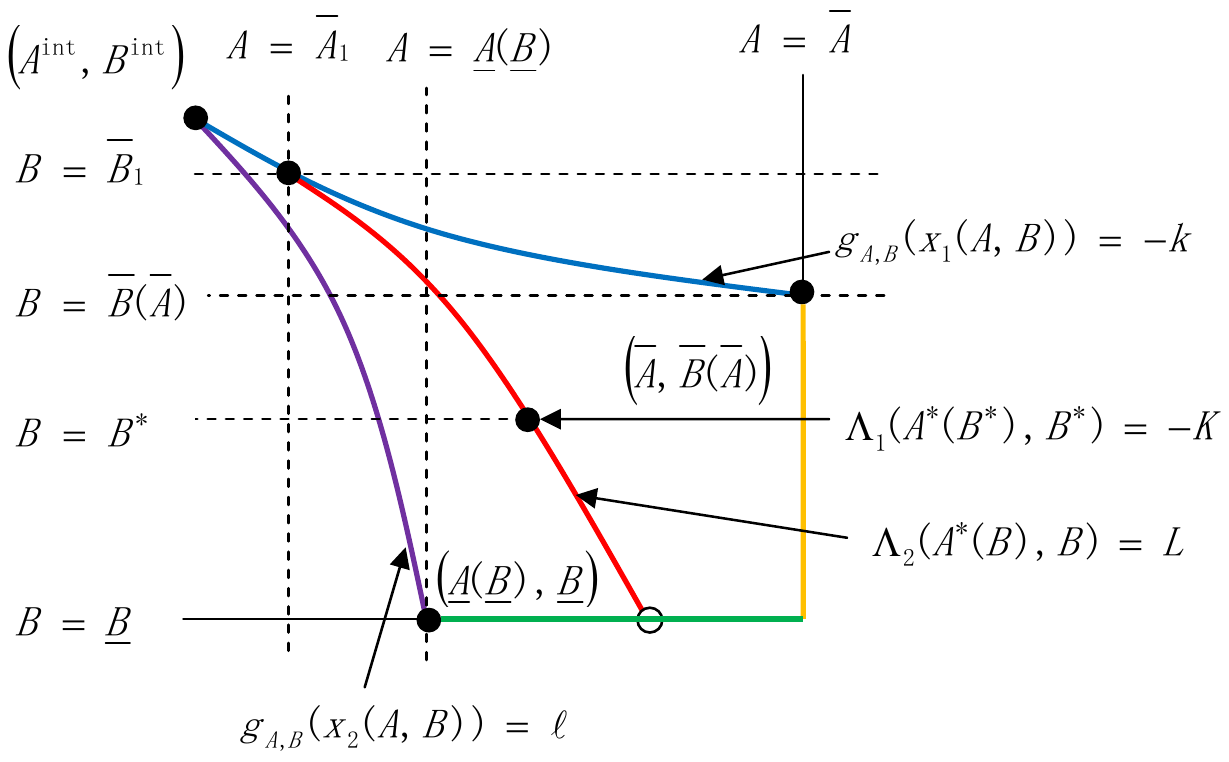}
\caption{For $B\in(\underline{B},\overline{B}_1]$,
there exists a unique $A^*(B)\in[\overline{A}_1,\overline{A})$ such that
$\Lambda_2(A^*(B), B)=L$.
There is a unique $B^*\in(\underline{B},\overline{B}_1)$
that satisfies $\Lambda_1(A^*(B^*),B^*)=-K$.}
\label{fig:dis5}
\end{figure}

For each $B\in (\underline{B},\overline{B}_1)$, Lemma
\ref{lem:A^*(B)-1discounted} shows that $(A^*(B), B)\in G$. Thus,
\begin{displaymath}
  g_{A^*(B), B}(x_1(A^*(B), B))< -k
\end{displaymath}
and
\begin{displaymath}
  \Lambda_1(A^*(B),B)  = \int_{d(A^*(B), B)}^{D(A^*(B), B)} [
  g_{A^*(B), B}(x) +k ] dx
\end{displaymath}
is well defined.

\begin{lemma}
\label{lem:B^*-discounted}
There exists a unique $B^*$ with $B^*\in(\underline{B},\overline{B}_1)$ such that
$\Lambda_1(A^*(B^*),B^*)=-K$.
\end{lemma}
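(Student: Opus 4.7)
The plan is to reduce the statement to the Intermediate Value Theorem applied to $\Psi(B) := \Lambda_1(A^*(B), B)$ on $(\underline{B}, \overline{B}_1]$. I would establish in order: (i) $\Psi$ is continuous; (ii) $\Psi(\overline{B}_1) = 0 > -K$; (iii) $d\Psi/dB < 0$ strictly; (iv) $\lim_{B\downarrow \underline{B}}\Psi(B) = -\infty$. Continuity of $\Psi$ follows by composing continuity of $A^*(\cdot)$ with continuity of $d(A,B)$ and $D(A,B)$ obtained from the Implicit Function Theorem applied to $g_{A,B}(d) = g_{A,B}(D) = -k$ (the strict signs $g'(d) < 0$ and $g'(D) > 0$ from Lemma \ref{lem:optimalParameter-discounted} guarantee the non-degeneracy hypothesis). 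For the right endpoint, equation \eqref{eq:A^*=A1-discounted} gives $A^*(\overline{B}_1) = \overline{A}_1$, and because $\overline{B}_1 = \overline{B}(\overline{A}_1)$ places $(\overline{A}_1, \overline{B}_1)$ on the curve $g_{A,B}(x_1(A,B)) = -k$, the minimum value of $g$ equals $-k$, so $d = x_1 = D$ at this point and $\Psi(\overline{B}_1) = 0$.

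For strict monotonicity I would differentiate $\Psi$. Since $g_{A,B}(d) = g_{A,B}(D) = -k$, the Leibniz boundary terms vanish, and using the expressions for $\partial g/\partial A$ and $\partial g/\partial B$ read from \eqref{eq:g2-discounted},
\begin{align*}
\frac{\partial \Lambda_1}{\partial A} = \frac{2(e^{\lambda_1(D-a)} - e^{\lambda_1(d-a)})}{\sigma^2(\lambda_1+\lambda_2)\lambda_1^2}, \quad \frac{\partial \Lambda_1}{\partial B} = \frac{2(e^{-\lambda_2(d-a)} - e^{-\lambda_2(D-a)})}{\sigma^2(\lambda_1+\lambda_2)\lambda_2^2}.
\end{align*}
Combining with \eqref{eq:dA^*(B)/dB-discounted} and clearing positive common factors, the sign of $d\Psi/dB$ coincides with the sign of
\begin{align*}
F(d-a, u-a) + F(D-a, U-a) - F(d-a, U-a) - F(D-a, u-a),
\end{align*}
where $F(v, w) := e^{\lambda_1 w - \lambda_2 v} - e^{\lambda_1 v - \lambda_2 w}$. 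A direct computation yields $\partial^2 F/(\partial v\,\partial w) = -\lambda_1\lambda_2 F(v, w)$, which is strictly negative on $\{v < w\}$ (where $F > 0$). Hence $F$ has strictly decreasing differences on this region, and since $d - a < D - a < U - a < u - a$, the displayed bracket is strictly negative, giving $d\Psi/dB < 0$.

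The main obstacle is (iv), which I would handle in three steps. First, from $A^*(B) \ge \overline{A}_1$ and $\partial x_1/\partial A < 0$ in \eqref{eq:dx1/dA-discounted}, $x_1(A^*(B), B) \le x_1(\overline{A}_1, B) \to -\infty$ by \eqref{eq:x1ABlimit1-discounted}, and $d \le x_1$, so $d(A^*(B), B) \to -\infty$. Second, inspection of \eqref{eq:g2-discounted} shows $g_{A,B}$ is uniformly bounded on every compact subset of $\R$ as $(A, B)$ ranges over the compact region $[\overline{A}_1, \overline{A}] \times [\underline{B}, \overline{B}_1]$; consequently the increasing-branch crossing $D(A^*(B), B)$ of the level $-k$ is uniformly bounded above by some $M_1 < \infty$. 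Third, by \eqref{eq:limg(x1)-discounted} and \eqref{eq:hlimit}, $g_{A^*(B), B}(x_1(A^*(B), B)) \to \tfrac{1}{\beta}\lim_{x\to -\infty}h'(x) < -k$; exploiting the pointwise limit of $g_{A^*(B), B}$ on $(-\infty, a)$ and its asymptotic behavior as $x\to -\infty$, one extracts $\epsilon > 0$ and $M_0 \in \R$ such that $g_{A^*(B), B}(x) + k \le -\epsilon$ for all $x \le M_0$ and all $B$ sufficiently close to $\underline{B}$. Combining,
\begin{align*}
\Psi(B) = \int_{d(A^*(B), B)}^{D(A^*(B), B)}(g_{A^*(B), B} + k)\,dx \le -\epsilon\bigl(\min(M_0, M_1) - d(A^*(B), B)\bigr) \to -\infty.
\end{align*}
The Intermediate Value Theorem combined with strict monotonicity then delivers the unique $B^* \in (\underline{B}, \overline{B}_1)$ with $\Psi(B^*) = -K$.
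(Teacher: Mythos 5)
Your high-level plan---apply the Intermediate Value Theorem to $\Psi(B)=\Lambda_1(A^*(B),B)$, establish $\Psi(\overline{B}_1)=0$, strict monotonicity, and $\Psi\to-\infty$ as $B\downarrow\underline{B}$---is exactly the paper's. Your submodularity idea for (iii) is a cleaner route than the paper's Lagrange Mean Value Theorem comparison, and the identity $\partial^2 F/\partial v\,\partial w=-\lambda_1\lambda_2 F$ is correct; but you drew the wrong conclusion from it. Since $\partial^2 F/\partial v\,\partial w<0$ on $\{v<w\}$, integrating over the rectangle $[d-a,D-a]\times[U-a,u-a]$ gives
\begin{equation*}
F(D-a,u-a)-F(d-a,u-a)-F(D-a,U-a)+F(d-a,U-a)<0,
\end{equation*}
i.e.\ the displayed bracket is strictly \emph{positive}, so $d\Psi/dB>0$. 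This is also the only sign compatible with $\Psi(\overline{B}_1)=0$ and $\Psi\to-\infty$; as written, your three claims in (ii), (iii), (iv) contradict one another.

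The genuine gap is step (iv). The inequality ``$g_{A^*(B),B}(x)+k\le-\epsilon$ for all $x\le M_0$'' cannot hold: by \eqref{eq:limg1-discounted}, $\lim_{x\downarrow-\infty}g_{A,B}(x)=+\infty$ for every $(A,B)$, so $g+k>0$ for $x$ sufficiently negative; the inequality $g\le-k$ holds only on the finite interval $[d,D]$ straddling the local minimizer $x_1$. Relatedly, your step 2 supplies an \emph{upper} bound on $D(A^*(B),B)$, but the final estimate actually needs a \emph{lower} bound (with $d\to-\infty$ you must keep $D$ from escaping to $-\infty$ as well, so that $D-d\to\infty$), and the inference from ``$g$ uniformly bounded on compacts'' to a bound on $D$ in either direction is not given. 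The paper closes precisely this gap: it shows $g_{A^*(B),B}(x_1(A^*(B),B))$ is increasing in $B$, fixes $\overline{B}_2$ to obtain a uniform depth $M_2>0$ with $g(x_1)\le-k-2M_2$ for $B<\overline{B}_2$, introduces the level crossings $d_1<D_1$ of $\{g=-k-M_2\}$ inside $(d,D)$, and then establishes $\partial D_1(A^*(B),B)/\partial B<0$ by an Implicit Function Theorem computation, which yields the needed uniform lower bound $D_1(A^*(B),B)\ge D_1(A^*(\overline{B}_2),\overline{B}_2)$. Without some substitute for this monotonicity-in-$B$ step your argument does not close.
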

\begin{proof}
We only need to show that $\Lambda_1(A^*(B),B)$ can take any value in
$(-\infty,0)$ for
$B\in(\underline{B},\overline{B}_1)$ and is strictly increasing in $B$.

It has been shown in Lemma \ref{lem:A^*(B)-discounted}  that
that $A^*(\overline{B}_1)=\overline{A}_1$ and
$(\overline{A}_1, \overline{B}_1)$ is on the upper boundary of $G$
(the blue curve in Figure \ref{fig:dis5}).
Therefore
\begin{displaymath}
 g_{\overline{A}_1,\overline{B}_1}(x_1(\overline{A}_1,\overline{B}_1))=-k
\end{displaymath}
and
\begin{eqnarray}
\lim_{B\uparrow \overline{B}_1}g_{A^*(B),B}(x_1(A^*(B),B))
=g_{A^*(\overline{B}_1),\overline{B}_1}(x_1(A^*(\overline{B}_1),\overline{B}_1))
=g_{\overline{A}_1,\overline{B}_1}(x_1(\overline{A}_1,\overline{B}_1))
=-k.\nonumber\\\label{eq:g=-k-discounted}
\end{eqnarray}
It follows that
\begin{displaymath}
\lim_{B\uparrow \overline{B}_1}\Lambda_1(A^*(B),B)=0.
\end{displaymath}
We now prove
\begin{eqnarray}
\label{eq:limLambda1-discounted}
\lim_{B\downarrow \underline{B}}\Lambda_1(A^*(B),B)=-\infty.
\end{eqnarray}

First, we prove
\begin{eqnarray}
\frac{\partial g_{A^*(B),B}(x_1(A^*(B),B))}{\partial B}
>0 \label{eq:derigless0}.
\end{eqnarray}
To see this, for  $B\in (\underline{B}, \overline{B}_1)$,
\begin{eqnarray*}
&&\frac{\partial g_{A^*(B),B}(x_1(A^*(B),B))}{\partial B}\\
&&\quad=\frac{2}{\sigma^2}\frac{1}{\lambda_1+\lambda_2}
\bigr[\frac{1}{\lambda_1}\frac{dA^*(B)}{d B}e^{\lambda_1
  (x_1(A^*(B),B)-a)}+\frac{1}{\lambda_2}e^{-\lambda_2
  (x_1(A^*(B),B)-a)}\bigr]\\
&&\quad=
\frac{2}{\sigma^2}\frac{1}{\lambda_1+\lambda_2}
\bigr[\frac{1}{\lambda_1}\frac{\lambda_1^2(e^{-\lambda_2 (u(A^*(B),B)-a)}-e^{-\lambda_2
    (U(A^*(B),B)-a)})}{\lambda_2^2(e^{\lambda_1 (u(A^*(B),B)-a)}-e^{\lambda_1 (U(A^*(B),B)-a)})}e^{\lambda_1 (x_1(A^*(B),B)-a)}\\
   &&\quad\quad+\frac{1}{\lambda_2}e^{-\lambda_2
     (x_1(A^*(B),B)-a)}\bigr],
\end{eqnarray*}
where the second equality follows from
\eqref{eq:dA^*(B)/dB-discounted}.
Using the {\em Lagrange Mean Value Theorem}, there exist $y_1\in\bigl(U(A^*(B),B),u(A^*(B),B)\bigr)$
and $y_2\in\bigl(U(A^*(B),B),u(A^*(B),B)\bigr)$ such that
\begin{eqnarray}
&&e^{-\lambda_2 (u(A^*(B),B)-a)}-e^{-\lambda_2 (U(A^*(B),B)-a)}
=-\lambda_2e^{-\lambda_2 (y_1-a)}\bigl(u(A^*(B),B)-U(A^*(B),B)\bigr),\nonumber\\
\label{eq:y1-discounted}\\
&&e^{\lambda_1 (u(A^*(B),B)-a)}-e^{\lambda_1 (U(A^*(B),B)-a)}=\lambda_1e^{\lambda_1 (y_2-a)}\bigl(u(A^*(B),B)-U(A^*(B),B)\bigr).\label{eq:y2-discounted}
\end{eqnarray}
Therefore, for $B\in (\underline{B}, \overline{B}_1)$,
\begin{eqnarray*}
&&\frac{\partial g_{A^*(B),B}(x_1(A^*(B),B))}{\partial B}\nonumber\\
&&\quad= \frac{2}{\sigma^2}\frac{1}{\lambda_1+\lambda_2}
\bigr[-\frac{1}{\lambda_2}\frac{e^{-\lambda_2 (y_1-a)}}{e^{\lambda_1 (y_2-a)}}e^{\lambda_1 (x_1(A^*(B),B)-a)}
    +\frac{1}{\lambda_2}e^{-\lambda_2 (x_1(A^*(B),B)-a)}\bigr]\nonumber\\
&&\quad= \frac{2}{\sigma^2}\frac{1}{\lambda_1+\lambda_2}\frac{1}{\lambda_2}
e^{-\lambda_2 (y_1-a)}\bigr[-e^{\lambda_1 (x_1(A^*(B),B)-y_2)}
    +e^{-\lambda_2 (x_1(A^*(B),B)-y_1)}\bigr]\nonumber\\
&&\quad >0,
\end{eqnarray*}
where the inequality holds because $x_1(A^*(B),B)<
D(A^*(B),B)<U(A^*(B),B)<y_1$ and $x_1(A^*(B),B)<
D(A^*(B),B)<U(A^*(B),B)<y_2$.  Thus, we have proved (\ref{eq:derigless0}).

Fix an $\overline{B}_2\in(\underline{B},\overline{B}_1)$.
Define
\begin{eqnarray*}
M_2=-\frac{g_{A^*(\overline{B}_2),\overline{B}_2}(x_1(A^*(\overline{B}_2),\overline{B}_2))+k}{2}.
\end{eqnarray*}
It follows
from \eqref{eq:g=-k-discounted} and \eqref{eq:derigless0}  that
$g_{A^*(\overline{B}_2),\overline{B}_2}(x_1(A^*(\overline{B}_2),\overline{B}_2))<-k$
and thus $M_2>0$.
From (\ref{eq:derigless0}), one has that,  for
$B\in(\underline{B},\overline{B}_2)$,
\begin{eqnarray*}
g_{A^*(B),B}(x_1(A^*(B),B))
<g_{A^*(\overline{B}_2),\overline{B}_2}(x_1(A^*(\overline{B}_2),\overline{B}_2))
=-k-2M_2<-k-M_2.
\end{eqnarray*}
Therefore, for $B\in(\underline{B},\overline{B}_2)$,
there exist unique $d_1(A^*(B),B)$ and $D_1(A^*(B),B)$ such that
\begin{eqnarray*}
&&d_1(A^*(B),B)<x_1(A^*(B),B)<D_1(A^*(B),B),\\
&&g_{A^*(B),B}(d_1(A^*(B),B))=g_{A^*(B),B}(D_1(A^*(B),B))=-k-M_2,\\
&&g_{A^*(B),B}'(d_1(A^*(B),B))<0, \quad g_{A^*(B),B}'(D_1(A^*(B),B))>0.
\end{eqnarray*}
The properties of $g_{A^*(B), B}$ in Lemma
\ref{lem:optimalParameter-discounted} imply that
for $B\in(\underline{B},\overline{B}_2)$,
\begin{eqnarray*}
d(A^*(B),B)<d_1(A^*(B),B)<x_1(A^*(B),B)<D_1(A^*(B),B)<D(A^*(B),B).
\end{eqnarray*}
Therefore, for $B\in(\underline{B},\overline{B}_2)$,
\begin{eqnarray*}
\Lambda_1(A^*(B),B)&=&\int_{d(A^*(B),B)}^{D(A^*(B),B)}
\bigl[g_{A^*(B),B}(x)+k\bigr]dx\\
&\leq&\int_{d_1(A^*(B),B)}^{D_1(A^*(B),B)} \bigl[g_{A^*(B),B}(x)+k\bigr]dx\\
&\leq& -M_2 (D_1(A^*(B),B)-d_1(A^*(B),B)).
\end{eqnarray*}
By \eqref{eq:x1ABlimit1-discounted}, \eqref{eq:dx1/dA-discounted} and
$A^*(B)\geq A^{\rm int}$, one has that
\begin{eqnarray}
\label{eq:limx1-discounted}
\lim_{B\downarrow \underline{B}}x_1(A^*(B),B)\leq \lim_{B\downarrow
  \underline{B}}x_1(A^{\rm int},B)=-\infty.
\end{eqnarray}
Because $d_1(A^*(B),B)<x_1(A^*(B),B)$,
(\ref{eq:limx1-discounted}) implies that
\begin{eqnarray}
\label{eq:limd2-discounted}
\lim_{B\downarrow \underline{B}}d_1(A^*(B),B)=-\infty.
\end{eqnarray}
Now we  prove
\begin{equation}
  \label{eq:D1limitfinite}
\lim_{B\downarrow \underline{B}}D_1(A^*(B),B)>-\infty,
\end{equation}
which, together with \eqref{eq:limd2-discounted}, implies that
\begin{eqnarray*}
&&\lim_{B\downarrow \underline{B}}\Lambda_1(A^*(B),B)\\
&&\quad\leq \lim_{B\downarrow \underline{B}} -M_2 (D_1(A^*(B),B)-d_1(A^*(B),B))\\
&&\quad=-\infty,
\end{eqnarray*}
proving (\ref{eq:limLambda1-discounted}).

To prove (\ref{eq:D1limitfinite}), noting the definitions of $y_1$ and $y_2$, we have that
for $B\in (\underline{B}, \overline{B}_1)$,
\begin{eqnarray*}
&&\frac{\partial D_1(A^*(B),B)}{\partial B}\\
&&\quad=\frac{-\frac{2}{\sigma^2}\frac{1}{\lambda_1+\lambda_2}
\Bigl[\frac{1}{\lambda_1}\frac{dA^*(B)}{d B}e^{\lambda_1 (D_1(A^*(B),B)-a)}+\frac{1}{\lambda_2}e^{-\lambda_2 (D_1(A^*(B),B)-a)}\Bigr]}{g_{A^*(B),B}'(D_1(A^*(B),B))}\\
&&\quad =-\frac{2}{\sigma^2}\frac{1}{\lambda_1+\lambda_2}
\frac{1}{g_{A^*(B),B}'(D_1(A^*(B),B))}
\biggl[
    \frac{1}{\lambda_2}e^{-\lambda_2 (D_1(A^*(B),B)-a)} + \\
&&\quad \quad
\frac{1}{\lambda_1}\frac{\lambda_1^2(e^{-\lambda_2 (u(A^*(B),B)-a)}-e^{-\lambda_2
    (U(A^*(B),B)-a)})}{\lambda_2^2(e^{\lambda_1
    (u(A^*(B),B)-a)}-e^{\lambda_1 (U(A^*(B),B)-a)})}
    e^{\lambda_1  (D_1(A^*(B),B)-a)}
\biggr]\\
&&\quad =-\frac{2}{\sigma^2}\frac{1}{\lambda_1+\lambda_2}\frac{
-\frac{1}{\lambda_2}\frac{e^{-\lambda_2 (y_1-a)}}{e^{\lambda_1 (y_2-a)}}e^{\lambda_1 (D_1(A^*(B),B)-a)}
+\frac{1}{\lambda_2}e^{-\lambda_2 (D_1(A^*(B),B)-a)}}{g_{A^*(B),B}'(D_1(A^*(B),B))}\\
&& \quad =\frac{2}{\sigma^2}\frac{1}{\lambda_1+\lambda_2}\frac{1}{\lambda_2}
e^{-\lambda_2 (y_1-a)}\frac{e^{\lambda_1 (D_1(A^*(B),B)-y_2)}
    -e^{-\lambda_2 (D_1(A^*(B),B)-y_1)}}{g_{A^*(B),B}'(D_1(A^*(B),B))}\\
&&\quad <0,
\end{eqnarray*}
where the inequality is due to $D_1(A^*(B),B)<U(A^*(B),B)<y_1$, $D_1(A^*(B),B)<U(A^*(B),B)<y_2$
and $g_{A^*(B),B}'(D_1(A^*(B),B))>0$.
Therefore, we have proved (\ref{eq:D1limitfinite}).

Finally we show that
$\frac{\partial \Lambda_1(A^*(B),B)}{\partial B}>0$. It follows from
\eqref{eq:dA^*(B)/dB-discounted}  that
\begin{eqnarray*}
&&\frac{\partial \Lambda_1(A^*(B),B)}{\partial B}\\
&&\quad=\int_{d(A^*(B),B)}^{D(A^*(B),B)}\bigl[\frac{\partial g_{A^*(B),B}(x)}{\partial B}+k\bigr]dx
+\frac{\partial D(A^*(B),B)}{\partial B}\bigl[g_{A^*(B),B}(D(A^*(B),B))+k\bigr]\\
&&\quad\quad-\frac{\partial d(A^*(B),B)}{\partial B}\bigl[g_{A^*(B),B}(d(A^*(B),B))+k\bigr] \\
&&\quad =\frac{2}{\sigma^2}\frac{1}{\lambda_1+\lambda_2}\int_{d(A^*(B),B)}^{D(A^*(B),B)}
\bigl[\frac{1}{\lambda_1}\frac{dA^*(B)}{d B}e^{\lambda_1 (x-a)}+\frac{1}{\lambda_2}e^{-\lambda_2 (x-a)}\bigr]dx\\
&&\quad =\frac{2}{\sigma^2}\frac{1}{\lambda_1+\lambda_2}\int_{d(A^*(B),B)}^{D(A^*(B),B)}
\bigl[\frac{1}{\lambda_1}\frac{\lambda_1^2(e^{-\lambda_2 (u(A^*(B),B)-a)}-e^{-\lambda_2 (U(A^*(B),B)-a)})}
{\lambda_2^2(e^{\lambda_1 (u(A^*(B),B)-a)}-e^{\lambda_1 (U(A^*(B),B)-a)})}e^{\lambda_1 (x-a)}\\
&&\quad\quad+\frac{1}{\lambda_2}e^{-\lambda_2 (x-a)}\bigr]dx\\
&&\quad
=\frac{2}{\sigma^2}\frac{1}{\lambda_1+\lambda_2}\frac{1}{\lambda_2^2}
\frac{1}{e^{\lambda_1 (u(A^*(B),B)-a)}-e^{\lambda_1 (U(A^*(B),B)-a)}} \Big[\\
&&\quad\quad{(e^{-\lambda_2 (u(A^*(B),B)-a)}-e^{-\lambda_2 (U(A^*(B),B)-a)})
(e^{\lambda_1 (D(A^*(B),B)-a)}-e^{\lambda_1 (d(A^*(B),B)-a)})}
\\
&&\quad\quad-{(e^{-\lambda_2 (D(A^*(B),B)-a)}-e^{-\lambda_2 (d(A^*(B),B)-a)})(e^{\lambda_1 (u(A^*(B),B)-a)}-e^{\lambda_1 (U(A^*(B),B)-a)})}
\Big].
\end{eqnarray*}
If the expression inside the bracket is positive,
we must have $\frac{\partial \Lambda_1(A^*(B),B)}{\partial B}>0$.
Note that $d(A^*(B),B)<D(A^*(B),B)<U(A^*(B),B)<u(A^*(B),B)$. Thus, the
positivity of the expression is equivalent to
\begin{eqnarray}
\label{eq:dLambda1>02-discounted}
\frac{e^{-\lambda_2 (u(A^*(B),B)-a)}-e^{-\lambda_2 (U(A^*(B),B)-a)}}{e^{\lambda_1 (u(A^*(B),B)-a)}-e^{\lambda_1 (U(A^*(B),B)-a)}}
>\frac{e^{-\lambda_2 (D(A^*(B),B)-a)}-e^{-\lambda_2 (d(A^*(B),B)-a)}}{e^{\lambda_1 (D(A^*(B),B)-a)}-e^{\lambda_1 (d(A^*(B),B)-a)}}.
\end{eqnarray}
Using the {\em Lagrange Mean Value Theorem}, there exist
$z_1\in(d(A^*(B),B),D(A^*(B),B))$ and $z_2\in(d(A^*(B),B),D(A^*(B),B))$ such that
\begin{eqnarray*}
&&e^{-\lambda_2 (D(A^*(B),B)-a)}-e^{-\lambda_2 (d(A^*(B),B)-a)}=-\lambda_2e^{-\lambda_2 (z_1-a)}(D(A^*(B),B)-d(A^*(B),B)),\\
&&e^{\lambda_1 (D(A^*(B),B)-a)}-e^{\lambda_1 (d(A^*(B),B)-a)}=\lambda_1e^{\lambda_1 (z_2-a)}(D(A^*(B),B)-d(A^*(B),B)).
\end{eqnarray*}
Using \eqref{eq:y1-discounted} and \eqref{eq:y2-discounted}, we have
that inequality
\eqref{eq:dLambda1>02-discounted} is equivalent to
\begin{eqnarray*}
\frac{e^{-\lambda_2 (y_1-a)}}{e^{\lambda_1 (y_2-a)}}
<\frac{e^{-\lambda_2 (z_1-a)}}{e^{\lambda_1 (z_2-a)}},
\end{eqnarray*}
which is further equivalent to
\begin{eqnarray}
\label{eq:dLambda1>03-discounted}
e^{-\lambda_2 (y_1-z_1)}<e^{\lambda_1 (y_2-z_2)}.
\end{eqnarray}
Inequality (\ref{eq:dLambda1>03-discounted}) holds  because
$y_1>U(A^*(B),B)>D(A^*(B),B)>z_1$ and
$y_2>U(A^*(B),B)>D(A^*(B),B)>z_2$ imply that
\begin{eqnarray*}
y_1-z_1>0,\quad y_2-z_2>0.
\end{eqnarray*}
Therefore, we have proved $\frac{\partial
  \Lambda_1(A^*(B),B)}{\partial B}>0$, completing the proof of the lemma.
\end{proof}


\section*{Acknowledgments}
The authors would like to thank Hanqin Zhang at Chinese Academy of Sciences
and National University of Singapore for stimulating discussions.  Part of
work was done when the second author visited School of Industrial and
Systems Engineering, Georgia Institute of Technology, and the author
would like to thank the hospitality of the school.

\bibliography{dai}

\end{document}